\documentclass[12pt]{article}
\usepackage[utf8]{inputenc}
\usepackage[english]{babel}
\usepackage[letterpaper, margin=30mm, lmargin=30mm]{geometry}
\usepackage{amsmath}
\usepackage{amsthm}
\usepackage{amssymb}
\usepackage{graphicx}
\usepackage{tikz}
\usepackage{xy}
\usepackage[backend=bibtex,maxnames=99]{biblatex}
\usepackage{hyperref}
\usepackage{fancyhdr}
\usepackage{float}
\usepackage{extarrows}
\usepackage{setspace}
\usepackage[labelfont=bf]{caption}

\usetikzlibrary{positioning}
\tikzstyle{box} = [rectangle, rounded corners, text centered, draw=black, inner sep=7pt]

\hypersetup{
    colorlinks,
    citecolor=[rgb]{0 0 .6},
    filecolor=black,
    linkcolor=[rgb]{0 0 .6},
    urlcolor=[rgb]{0 0 .6}
}

\fancypagestyle{firstpage}{%

    \fancyfoot[L]{%
        \vspace*{-5\baselineskip}%
        \hrule%
        \vspace{0.5em}%
        \footnotesize This text was submitted in February, 2026 in fulfilment of the requirements for the %
        degree of Master of Science in Computer Science at the University of Auckland. %
        The project was supervised by Professor Andr\'e Nies.}
    \fancyfoot[C]{\vspace{-1\baselineskip}\thepage}
}

\theoremstyle{plain}
\newtheorem{theorem}{Theorem}[section]
\newtheorem{proposition}[theorem]{Proposition}
\newtheorem{lemma}[theorem]{Lemma}
\newtheorem{corollary}[theorem]{Corollary}
\newtheorem{question}[theorem]{Question}
\theoremstyle{definition}
\newtheorem{definition}[theorem]{Definition}
\theoremstyle{remark}

\newtheorem{remark}[theorem]{Remark}
\newtheorem{claim}[theorem]{Claim}

\newcommand{\hatc}{\widehat{\,\,\,}}
\newcommand{\wtow}{{}^\omega\omega}
\newcommand{\ttow}{{}^\omega2}
\newcommand{\infsets}{[\omega]^\omega}

\newcommand{\nonlow}{\mathrm{NonLow}}
\newcommand{\hypimm}{\mathrm{HypImm}}
\newcommand{\domfcn}{\mathrm{DomFcn}}
\newcommand{\opn}{\operatorname}
\newcommand{\as}{\mathcal{A}}
\newcommand{\ts}{\mathcal{T}}
\newcommand{\ps}{\mathcal{P}}
\newcommand{\af}{\mathcal{AF}}
\newcommand{\tf}{\mathcal{TF}}
\newcommand{\bd}{\mathcal{B}}
\newcommand{\cnv}{\!\downarrow\,}
\newcommand{\dvg}{\!\uparrow\,}
\newcommand{\ran}{\operatorname{ran}}
\newcommand{\massp}{\mathrm{MP}}
\newcommand{\inst}{\mathrm{inst}}
\newcommand{\sol}{\mathrm{sol}}
\newcommand{\card}{\opn{Card}}
\newcommand{\high}{\opn{H}}
\newcommand{\nl}{\opn{NL}}

\newcommand{\alt}[2]{#1_{(#2)}}
\newcommand{\alth}[2]{\hat#1_{(#2)}}
\newcommand{\bin}{\opn{bin}}
\newcommand{\add}{\opn{add}}
\newcommand{\cov}{\opn{cov}}
\newcommand{\non}{\opn{non}}
\newcommand{\cof}{\opn{cof}}
\newcommand{\notni}{{\not{\!\ni}}}

\numberwithin{equation}{section}

\title{\vspace*{1.5em}Maximal Eventually Different Families of Computable Functions}
\date{}
\author{Logan McDonald}

\begin{document}
    \maketitle

    \begin{abstract}
    Cardinal characteristics of the continuum are the cardinalities of interesting families of reals.
    A well-studied example is that of \emph{maximal almost disjoint (MAD)} families of sets of natural numbers.
    Significant work has been done investigating computability-theoretic analogues of cardinal characteristics.
    By considering encodings of MAD families as a single `universal' set, 
    Lempp, Miller, Nies, and Soskova (2023) studied the class of encoded MAD families.
    Such a class is referred to as a mass problem; one can study the relative complexity between mass problems.
    In Section 2, we build on the study of mass problems as analogues of cardinal characteristics.
    We define mass problems of \emph{maximal eventually different~(MED)} families of computable functions in the same way
    and compare them against the mass problems defined by Lempp et al. 
    In Section~3, we survey work by Greenberg, Kuyper, and Turetsky (2019) that provides an abstract framework for cardinal characteristics 
    and their effective counterparts.
    We show that this framework is suitable for obtaining results in the setting of mass problems.
    In Section 4, we showcase a construction by Schrittesser (2018) of an effectively closed MED family in set theory.
    We show that the construction is sufficiently effective that the computable members of the constructed family 
    are MED relative to computable functions.
\end{abstract}

    \thispagestyle{firstpage}

    \begin{figure}[b]
        \vspace*{17pt}
    \end{figure}

    \tableofcontents

    \section{Introduction}

The importance of the real numbers in much of modern mathematics needs no justification.
They were used without care until, in the 19th century, some groups began to work towards a rigorous foundation for all of mathematics.
As formal constructions of the real numbers developed, 
further questions could be asked about the analytic and topological properties of the real line.
At the turn of the 20th century, with Cantor's formal notion of the size of a set and proof of the uncountability of the reals,
the truth of the continuum hypothesis (CH) arose as the ultimate question about the reals at the most basic level:

\begin{quotation}
    \noindent There is no cardinality strictly between that of the natural numbers and that of the reals.
    That is, $\aleph_1=\mathfrak c$.
\end{quotation}

The definition of the constructible universe $L$ by G\"odel \cite{God:1939} and the method of forcing used by Cohen \cite{Coh:1963}
show that the Zermelo-Fraenkel axioms of set theory (ZFC), considered standard and sufficient for most of modern mathematics,
do not suffice to prove or disprove CH.
It is thus possible for models of ZFC to exist where there are uncountable subsets of the reals with sizes smaller than the continuum.
Since the failure of CH was proven consistent by Cohen \cite{Coh:1963}, 
a natural question has been to ask how far the universe can deviate from CH.
The technique used by Cohen can be easily adapted to show that there is no bound on the cardinality of the continuum in that for any ordinal $\alpha$,
there is a model of ZFC in which $\mathfrak c\ge\aleph_\alpha$.
Alternatively, we could look at the properties of cardinals below the continuum to compare with the case where CH holds.

Cardinal characteristics of the continuum, often referred to as simply \emph{cardinal characteristics} or occasionally \emph{cardinal invariants},
are uncountable cardinalities of subsets of the reals.
They usually correspond to the minimal sizes of subsets of some combinatorial, analytical, or topological interest.
Many developments in set-theoretic methods have come from showing the consistency of strict relations between cardinal characteristics,
constructing models of ZFC which are in some sense far from satisfying~CH.

As computability theory has developed alongside set theory, many parallels have been drawn between the two subjects.
The first approach to computability-theoretic analogues of cardinal characteristics was in the form of highness classes of oracles.
Highness classes represent the minimal required complexity in order to hold some property, 
analagous to cardinal characteristics defined as minimal cardinalities of families with some property.

\sloppy Rupprecht \cite{Rup:2010-1}, under the supervision of Blass, 
defined analogues for a well-known collection of cardinal characteristics: Cicho\'n's diagram.
Brendle, \mbox{Brooke-Taylor}, Nies, and Ng \cite{BBNN:2015} built on this framework and answered various questions.
All of the set-theoretic relations were reflected in their computability-theoretic counterparts,
but there were also several additional relations that only hold in computability, causing several of the highness class analogues to coincide.

This work proved useful in other computability research.
Monin and Nies~\cite{MN:2015} took the Gamma question, 
a problem in coarse computability concerning the spectrum of possible Gamma values of Turing degrees,
and reduced it to a problem concerning analogues of cardinal characteristics.
Following this, Monin \cite{Mon:2018} obtained a solution.
Later work by Monin and Nies \cite{MN:2021} began considering highness classes as their Muchnik degrees and obtained more general results.
We expand on this in Section \ref{analogue}.

Using a well-studied framework for abstractly representing various cardinal characteristics,
Greenberg, Kuyper, and Turetsky \cite{GKT:2019} show that this framework is also suitable for 
defining and showing results about highness classes as analogues of cardinal characteristics.
They show that most of the results by Rupprecht \cite{Rup:2010-1} and from \cite{BBNN:2015} can be systematically obtained in this setting.

The framework of highness classes as an analogue of cardinal characteristics is only suitable for those cardinal characteristics
which can be represented as the cardinalities of unbounded or cofinal subsets in relational structures.
Considering the possibility of analogues of other cardinal characteristics, 
Lempp, Miller, Nies, and Soskova \cite{LMNS:2023} developed a framework of 
analogues of the underlying families used to define cardinal characteristics.
This setting is much more flexible than that of highness classes, 
allowing for the possibility of studying most combinatorial cardinal characteristics.

This thesis will largely focus on computability-theoretic analogues of families of functions that are of interest to 
cardinal characteristics research in set theory.
In Section \ref{sf}, we define analogues of maximal eventually different families and maximal towers of functions 
in the framework developed in \cite{LMNS:2023}.
and relate them to previously studied analogues for subsets of Cantor space.
Section \ref{mpcichon} surveys work in \cite{GKT:2019} 
and applies it to a more recent framework for analogues of cardinal characteristics.
In Section \ref{schrittesser}, we expand on a construction by Schrittesser \cite{Sch:2018} of an effectively closed maximal eventually different family
in set theory, and show that the construction also yields an analogous result in computability theory.

\subsection{Cardinal characteristics of the continuum}

CH is stronger than necessary to control much of the structure below the continuum.
If every cardinal below the continuum behaves in some sense like $\aleph_0$, then the universe is in some sense not far from CH.
This is the idea encapsulated in Martin's Axiom:

\begin{definition}[MA$(\kappa)$]
    For every partial order $\mathbb P$ which satisfies the countable chain condition (c.c.c.)\
    and any family $D$ of dense subsets of $\mathbb P$ with $|D|\le\kappa$,
    there is a filter $F\subseteq\mathbb P$ such that $F$ meets every member of $D$.
\end{definition}

\begin{definition}[MA]
    Let $\mathfrak m$ be the least cardinal $\kappa$ such that MA$(\kappa)$ is false.
    MA is the statement $\mathfrak m=\mathfrak c$.
\end{definition}

MA$(\aleph_0)$ is provable and MA$(\mathfrak c)$ can be refuted in ZFC, so $\aleph_1\le\mathfrak m\le\mathfrak c$ and CH implies MA.
Even in models where the continuum is large, MA can still hold,
and so many useful properties of countable sets would be generalised to all uncountable sets of reals smaller than the continuum.
For example, if we assume MA, then every subset of the reals with cardinality smaller than $\mathfrak c$ has measure 0 and is meagre.
In general, for a cardinal characteristic $\mathfrak x$, 
the assumption $\mathfrak x=\mathfrak c$ is some way of imposing a structural constraint on the reals.

We can find simple and natural examples of cardinal characteristics in Baire space:

\begin{definition}[$\mathfrak b$]
    A family $F\subseteq\wtow$ is called \emph{$\le^*$-unbounded} if there is no function $h$ such that $f\le^*h$ for all $f\in F$.
    Define
        $$\mathfrak b=\min\{|F|:F\text{ is}\le^*\!\!\text{-unbounded}\}.$$
\end{definition}

\begin{definition}[$\mathfrak d$]
    A family $F\subseteq\wtow$ is called \emph{$\le^*$-dominating} if for every function $h$ there is $f\in F$ such that $h\le^*f$.
    Define
        $$\mathfrak d=\min\{|F|:F\text{ is}\le^*\!\!\text{-dominating}\}.$$
\end{definition}

Every $\le^*$-dominating family is $\le^*$-unbounded and $\wtow$ is $\le^*$-dominating, so $\mathfrak b\le\mathfrak d\le\mathfrak c$.
As with many cardinal characteristics, a simple diagonalisation argument shows that $\mathfrak b$ and $\mathfrak d$ are uncountable:
Suppose there was a countable unbounded family $\{f_i\}_{i\in\omega}$, the function $x\mapsto\max_{i\le n}f_i(n)$ dominates each $f_i$.
Both $\mathfrak b$ and $\mathfrak d$ were considered before the independence proof of CH, 
by Rothberger \cite{Rot:1938} and Kat\v etov \cite{Kat:1960} respectively.

Set theorists became increasingly interested in the topological and analytic properties of the real line.
Over time, it became apparent that hypotheses regarding the analytical properties of the real line tended to be stronger than topological ones.
Shelah \cite{She:1984} showed that ZFC proves the consistency of every projective set having the property of Baire,
but a large cardinal assumption is required for the consistency of every projective set being measurable.

Bartoszy\'nski \cite{Bar:1987} looked at the following concepts.
For the following, $\mathcal I$ is an ideal on an infinite set $X$ which contains all finite subsets of $X$.

\begin{definition}[$\add(\mathcal I)$]
    The additivity number of an ideal $\mathcal I$ is the least cardinality of a family of members of $\mathcal I$
    whose union is not a member of $\mathcal I$:
        $$\add(\mathcal I)=\min\{|F|:F\subseteq\mathcal I\text{ and }\bigcup F\notin\mathcal I\}.$$
\end{definition}

\begin{definition}[$\cov(\mathcal I)$]
    The covering number of an ideal $\mathcal I$ is the least cardinality of a family of members of $\mathcal I$
    whose union is $X$:
        $$\cov(\mathcal I)=\min\{|F|:F\subseteq\mathcal I\text{ and }\bigcup F=X\}.$$
\end{definition}

\begin{definition}[$\non(\mathcal I)$]
    The uniformity number of an ideal $\mathcal I$ is the least cardinality of a subset of $X$ which is not in $\mathcal I$:
        $$\non(\mathcal I)=\min\{|Y|:Y\subseteq X\text{ and }Y\notin\mathcal I\}.$$
\end{definition}

\begin{definition}[$\cof(\mathcal I)$]
    The cofinality of an ideal $\mathcal I$ is the least cardinality of a family of members of $\mathcal I$
    which is cofinal in the ordering $\langle\mathcal I,\subseteq\rangle$:
        $$\cof(\mathcal I)=\min\{|F|:F\subseteq\mathcal I\text{ and }(\forall Y\in\mathcal I)(\exists Z\in F)[Y\subseteq Z]\}.$$
\end{definition}

\begin{figure}[h]
    \centering
    \begin{tikzpicture}[node distance = 1em and 1.5em]
        \node (addI) {$\add(\mathcal I)$};
        \node (covI) [above = of addI, xshift = 3em] {$\cov(\mathcal I)$};
        \node (nonI) [above = of addI, xshift = -3em] {$\non(\mathcal I)$};
        \node (cofI) [above = of addI, yshift = 2.8em] {$\cof(\mathcal I)$};

        \draw[->] (addI) -- (covI);
        \draw[->] (addI) -- (nonI);
        \draw[->] (covI) -- (cofI);
        \draw[->] (nonI) -- (cofI);
    \end{tikzpicture}
    \caption{Relations between cardinal characteristics associated with an ideal.}
\end{figure}

Of interest to the study of cardinal characteristics is the case where $|X|=\mathfrak c$ and $\mathcal I$ is a $\sigma$-ideal, that is, 
it is closed under countable unions.
This ensures that $\aleph_1\le\add(\mathcal I)$ and usually that $\cof(\mathcal I)\le\mathfrak c$.
Two such ideals of general importance in mathematics are the ideals of meagre and Lebesgue null sets of reals.
We refer to them as $\mathcal M$ and $\mathcal N$ respectively.
Relating the cardinal characteristics associated with these ideals to each other and to $\mathfrak b$ and $\mathfrak d$,
we obtain Figure \ref{cichondiagram}.
It was named after Cicho\'n by Fremlin \cite{Fre:1984}.

\begin{figure}[h]
    \centering
    \begin{tikzpicture}[node distance = 1em and 1.5em]
        \node (addM) {$\mathrm{add}(\mathcal M)$};
        \node (covM) [right = of addM] {$\mathrm{cov}(\mathcal M)$};
        \node (b) [above = of addM] {$\mathfrak b$};
        \node (d) [above = of covM] {$\mathfrak d$};
        \node (nonM) [above = of b] {$\mathrm{non}(\mathcal M)$};
        \node (cofM) [above = of d] {$\mathrm{cof}(\mathcal M)$};
        \node (addN) [left = of addM] {$\mathrm{add}(\mathcal N)$};
        \node (covN) [left = of nonM] {$\mathrm{cov}(\mathcal N)\,$};
        \node (nonN) [right = of covM] {$\mathrm{non}(\mathcal N)$};
        \node (cofN) [right = of cofM] {$\,\mathrm{cof}(\mathcal N)\;$};
        \node (a1) [left = of addN] {$\aleph_1$};
        \node (c) [right = of cofN] {$\mathfrak c$};

        \draw[->] (addN) -- (addM); \draw[->] (addN) -- (covN); \draw[->] (addM) -- (covM); \draw[->] (addM) -- (b);
        \draw[->] (b) -- (nonM); \draw[->] (covM) -- (d); \draw[->] (covN) -- (nonM); \draw[->] (d) -- (cofM);
        \draw[->] (b) -- (d); \draw[->] (nonM) -- (cofM); \draw[->] (covM) -- (nonN);
        \draw[->] (nonN) -- (cofN); \draw[->] (cofM) -- (cofN); \draw[->] (a1) -- (addN); \draw[->] (cofN) -- (c);
    \end{tikzpicture}
    \caption{Cicho\'n's diagram.}
    \label{cichondiagram}
\end{figure}

Celebrated work by Goldstern et al.\ \cite{GKMS:2022} has shown that it is consistent that the maximum separation occurs in Figure \ref{cichondiagram},
that all of the cardinals in \mbox{Cicho\'n's} diagram have distinct values, with the exception of the dependencies 
${\add(\mathcal M)=\min\{\cov(\mathcal M),\mathfrak b\}}$ and ${\cof(\mathcal M)=\max\{\non(\mathcal M),\mathfrak d\}}$.

A key feature of the cardinals in Cicho\'n's diagram, and some other cardinal characteristics, 
is that there is no internal structure or restrictions on which reals the witnessing families can contain.
This allows for a very general framework of these cardinal characteristics, which will be expanded on in Section \ref{gktintro}.
Investigating families with internal structure gives more complex cardinal characteristics.
The following are two of the most well-studied such cardinal characteristics:

\begin{definition}[$\mathfrak a$]
    A family $G\subseteq\infsets$ is \emph{almost disjoint (AD)} if no two of its members have infinite intersection.
    The family $G$ is \emph{maximal almost disjoint (MAD)} if there is no AD family which strictly contains it.
    Define
        $$\mathfrak a=\min\{|G|:G\text{ is MAD}\}.$$
\end{definition}

\begin{definition}[$\mathfrak t$]
    A family $G\subseteq\infsets$ is called a \emph{tower} if it is linearly ordered by $\subseteq^*$.
    The tower $G$ is \emph{maximal} if there is no set $R$ such that $R\subseteq^*X$ for all $X\in G$.
    Define
        $$\mathfrak t=\min\{|G|:G\text{ is a maximal tower}\}.$$
\end{definition}

\subsection{Nonlowness classes and mass problems}
\label{analogue}

The rich combinatorial nature of many cardinal characteristics makes them an appealing target for possible computability-theoretic analogues.
In particular, many cardinal characteristics are defined in terms of an object with a universal property:
for every real $x$, some relation holds. This kind of relation is easy to adapt to computability by restricting to computable reals $x$.

Computability theoretic analogues of cardinal characteristics were first considered in the doctoral research of Rupprecht \cite{Rup:2010-1}.
He defined `Turing characteristics' as highness classes of oracles, in particular ones that correspond to the cardinals in Cicho\'n's diagram.
In \cite{BBNN:2015}, this theory was developed further and made Rupprecht's results more accessible.
They give a full diagram of analogues to Cicho\'n's diagram.

The analogue of a cardinal characteristic in this framework is related to the method of forcing 
that would be used to increase the value of the characteristic.
The cardinal $\mathfrak b$ is the least cardinality of a family of functions which is not uniformly dominated by a single function.
In order to create a model with a greater value of $\mathfrak b$, we destroy this property of these families, 
so we add a function that dominates each one in the ground model.
In the set theory setting, we need to iterate this process to actually increase the cardinality of $\mathfrak b$.
The computability-theoretic analogue is a function that dominates every computable function, a dominating function.
Relativisation to a dominating function is analogous to working in a forcing extension 
that contains a function that dominates those in the ground model.

Analogues for the rest of Cicho\'n's diagram are obtained in a similar way.
The analogue for $\mathfrak d$ is a function which is not dominated by any computable function: a hyperimmune function.
The cardinals associated with the ideals of meagre and null sets are more complicated,
but by identifying meagre or null sets with reals that code them, we can obtain similar highness classes.
For example, $\mathrm{add}(\mathcal M)$ is the least cardinality of a family of meagre sets whose union is not meagre.
To destroy that property of a family, we add a meagre set which contains all ground model meagre sets.
Thus, the analogue is a code for a meagre set that contains every meagre set which coded by computable reals.
An oracle which computes such a code is called \emph{meagre engulfing}.

Unfortunately, some of the analogues defined by Rupprecht \cite{Rup:2010-1} or in \cite{BBNN:2015} turn out to be equivalent.
One such case is that the analogues for $\mathfrak b$ and for $\opn{add}(\mathcal M)$;
dominating functions and meagre engulfing oracles both characterise the high degrees.
It is then natural to ask if we can find a framework that more closely reflects the relations in set theory.
Kihara \cite{Kih:2017} considered a setting like this for the hyperarithmetical sets as opposed to the computable ones.
Kihara shows that the hyperarithmetical-theoretic analogues for $\opn{cov}(\mathcal M)$ and $\mathfrak d$ can be separated
while the computability-theoretic ones can not.

In \cite{BBNN:2015}, the application of this framework to analogues of cardinals outside of Cicho\'n's diagram is also summarised.
They give analogues of the cardinal characteristics $\mathfrak s$ and $\mathfrak r$ as $r$-cohesive and bi-immune degrees and relate them to the
analogues of Cicho\'n's diagram.
Valverde and Tveite \cite{VT:2021} considered analogues of the cardinal characteristics $\mathfrak e$ and $\mathfrak v$ by defining
prediction and evasion degrees.
They also show how these fit with the previously studied analogues.

Monin and Nies \cite{MN:2015} investigated a problem in the study of coarse computability: the \emph{Gamma question}.
The Gamma question concerns the spectrum of possible \emph{Gamma values} of Turing degrees.
The reader can find details in the aforementioned paper by Monin and Nies.
They look at the highness classes studied by Rupprecht \cite{Rup:2010-1} and in \cite{BBNN:2015}
and the Gamma values of their members.
Further work on this by Monin \cite{Mon:2018} used this approach and resolved the Gamma question, showing the only possible Gamma values, 
and those which are attained, are $0$, $1/2$, and $1$.

A different analogue of cardinal characteristics is described in later work by Monin and Nies \cite{MN:2021},
providing a more general framework for problems like the Gamma question.
They define mass problems, classes of sets or functions, and we can study the relative complexity in terms of the mass problems themselves.
Each highness class previously studied can be considered as a mass problem.
The Gamma question can be reduced to showing Muchnik or Medvedev equivalence between some mass problems.

Given mass problems $\mathcal X$ and $\mathcal Y$, we say that $\mathcal X$ is \emph{Muchnik (weakly) reducible} to $\mathcal Y$, 
written $\mathcal X\le_w\mathcal Y$, if each element of $\mathcal Y$ computes a member of $\mathcal X$.
If this computation is uniform, that is, there is a Turing functional $\Gamma$ such that $\Gamma^y\in\mathcal X$ for any $y\in\mathcal Y$, 
then we say that $\mathcal X$ is \emph{Medvedev (strongly) reducible} to $\mathcal Y$, written $\mathcal X\le_s\mathcal Y$.
We will often talk about the Muchnik framework as it is the most general.
Most reductions we prove will be sufficiently uniform for the Medvedev relation to hold also.

The setting of mass problems in the Muchnik ordering is very appealing for analogues of cardinal characteristics.
Cardinal characteristics are often defined as the minimum of the spectrum of cardinalities of a kind of family.
The Muchnik degrees act as an extension of the Turing degrees to be closed under infima and suprema:
The Turing degrees embed into the Muchnik degrees by taking the singleton of any of its members as a representative,
and the union of two Muchnik degrees is their infimum.

In \cite{LMNS:2023}, mass problems are also used for analogues of cardinal characteristics.
Rather than looking at the forcing used to change the value of the cardinals, they looked at the families that witness the cardinality.
They consider a mass problem, a class of sets or functions, containing all families with a property.
Working in computability, we need only consider countable families.
As such, we can code the family as a single object:

\begin{definition}
    \label{seqcode}
    Given a function $f:\omega\to\omega$, we define the columns of $f$ by $f^{[e]}:x\mapsto f(\langle x,e\rangle)$.
    Similarly, for a set $F\subseteq\omega$, the columns of $F$ are defined as $F^{[e]}=\{x:\langle x,e\rangle\in F\}$.
    This allows us to code a sequence $\langle x_e\rangle_{e\in\omega}$ of functions or sets
    as a single universal object $x$ where $x^{[e]}=x_e$ for each $e$.
\end{definition}

We can use this to define analogues of cardinal characteristics as a mass problem of codes for sequences with some property.

In \cite{LMNS:2023}, analogues of the cardinal characteristics 
$\mathfrak a$, $\mathfrak t$, $\mathfrak u$, and $\mathfrak i$ are defined in this way.
We will focus on their analogues of $\mathfrak a$ and $\mathfrak t$:

\begin{definition}[$\as$]
    \label{asdef}
    A sequence $\langle G_e\rangle_{e\in\omega}$ of computable sets is called \emph{almost disjoint} 
    if its members have pairwise finite intersections.
    Such a family is \emph{maximal almost disjoint (MAD)} if for any computable set $H$ there is some $e$ such that $G_e\cap H$ is infinite.
    Let $\as$ be the mass problem of MAD families.
\end{definition}

\begin{definition}[$\ts$]
    \label{tsdef}
    A sequence $\langle G_e\rangle_{e\in\omega}$ of infinite computable sets is called a \emph{tower} if we have for every $e$ that
    $G_{e+1}\subseteq^*G_e$ and $G_e\smallsetminus G_{e+1}$ is infinite.
    Such a tower is \emph{maximal} if for any infinite computable set $H$ there is some $e$ such that $H\smallsetminus G_e$ is infinite.
    Let $\ts$ be the mass problem of maximal towers.
\end{definition}

It was shown in \cite[Fact 2.2]{LMNS:2023} that these mass problems are Medvedev equivalent.
In Section \ref{sf}, we will give a weaker alternative to $\ts$ that could better reflect the set-theoretic relations between cardinal characteristics.
The author's previous work \cite{McD:2024} built on the framework in \cite{LMNS:2023}
and defined an analogue for maximal ideal independent families of sets.
They also consider the complexity of maximal independent and maximal ideal independent families in the cases of other Boolean algebras of sets,
the $\omega$-computably approximable and $K$-trivial sets respectively.

Mass problems with the Muchnik reduction can be argued to be a more natural structure than the Turing degrees to study computational complexity.
By choosing a canonical representative for each Muchnik degree, the union of its members, we can see that the Muchnik degrees form a lattice:
For any two Muchnik degrees, their join can be given as their union, and their meet can be given as their intersection.

Both nonlowness classes and mass problems provide correspondents of cardinal characteristics
which somewhat faithfully reflect the relations in set theory.
More equivalences arise in these computability-theoretic settings, which suggests that 
there is often an inherent complexity in separating cardinal characteristics.
Kihara's work \cite{Kih:2017} on analogues in hyperarithmetical theory reinforces this.
The mass problem analogy is more general and more closely reflects the families in set theory.

\subsection{Weihrauch problems and morphisms}
\label{gktintro}

Weihrauch problems provide an abstract structure that allows for the representation of various cardinal characteristics.
As this structure is not tied to the set-theoretic nature of the cardinal characteristics themselves,
it is also useful in defining computability-theoretic analogues.
We give an exposition on this framework here; it will be referenced in Section \ref{mpcichon}.

Fremlin \cite{Fre:1993} observed that various cardinal characteristics, including those in Cicho\'n's diagram,
can be expressed as the least size of a set which witnesses the totality of some relation.
For example, the relation associated with $\mathfrak d$ is $\le^*$ on elements of $\wtow$.
We have that $\mathfrak d$ is the least size of a set of functions $F$ such that for any $h\in\wtow$ there is $f\in F$ such that $f\le^*h$.
By substituting a different relation in place of $\le^*$, we obtain various other cardinal characteristics.

Vojt\'a\v{s} \cite{Voj:1993} came up with a general framework of this form.
He studied reductions between these relational structures and gave them the name \emph{generalised Galois-Tukey connections}.
Part of the motivation for this framework is that these relational structures and their Galois-Tukey connections 
can be studied independently of the cardinalities associated with them, and so they are still interesting to study even if CH holds.

We follow the treatment of \cite{GKT:2019}.
They survey the usage of these relations in the study of cardinal characteristics in both set theory and computability,
using terminology which is widespread due to its independent development in computable analysis and reverse mathematics:

\begin{definition}
    A \emph{Weihrauch problem} is defined as a triple $A=\langle A_\inst,A_\sol,A\rangle$ 
    where $A_\inst,A_\sol\subseteq\wtow$ and $A\subseteq A_\inst\times A_\sol$.
    Elements of $A_\inst$ are referred to as \emph{instances} and elements of $A_\sol$ are referred to as \emph{solutions}. 
    We say that $b\in A_\sol$ is a solution for $A\in A_\inst$ if $aAb$.
\end{definition}

The simplest examples are the Weihrauch problems we associate with $\mathfrak d$ and~$\mathfrak b$:
Let $\opn{Dom}=\langle \wtow,\wtow,\le^*\rangle$ and $\opn{Esc}=\langle\wtow,\wtow,<^\infty\rangle$.
The cardinal characteristic associated with such a problem is defined in a way that generalises the correspondence we have between these
problems and cardinal characteristics:

\begin{definition}
    A \emph{complete solution set} for a Weihrauch problem $A$ is one that for each instance $a\in A_\inst$, contains a solution $b\in A_\sol$ for $a$.
    The cardinal characteristic associated with $A$ is defined as:
        $$\card(A)=\min\{|C|:C\text{ is a complete solution set for }A\}.$$
\end{definition}

We see immediately from the definitions that $\mathfrak d=\card(\opn{Dom})$ and $\mathfrak b=\card(\opn{Esc})$.
In \cite{GKT:2019}, it is observed that the flexibility in this definition also allows for abstraction of the
computability-theoretic analogues for cardinal characteristics defined by Rupprecht \cite{Rup:2010-1} and in \cite{BBNN:2015}:

\begin{definition}
    Given a Weihrauch problem $A$, define $$A_\inst^r=\{a\in A_\inst:a\text{ computable}\},$$ and $A_\sol^r$ similarly.
    The highness class associated with $A$ is defined as:
        $$\high(A)=\{Z:(\exists b\le_TZ)(\forall a\in A_\inst^r)[aAb]\}.$$
\end{definition}

We have that $\high(\opn{Dom})$ is the class of oracles which compute dominating functions, 
and $\high(\opn{Esc})$ is the class of oracles which compute hyperimmune functions.
The reader may notice that the order is changed from what was proposed by Rupprecht \cite{Rup:2010-1} and in \cite{BBNN:2015}.
The forcing analogy is dual to the associated highness class; 
in \cite{GKT:2019}, it is suggested that the nonlowness class via the dual problem may be more natural to consider:

\begin{definition}
    Given a Weihrauch problem $A=\langle A_\inst,A_\sol,A\rangle$, define its \emph{dual} $A^\bot=\langle A_\sol,A_\inst,A^\bot\rangle$ where
        $$bA^\bot a\iff\lnot aAb.$$
    The associated nonlowness class associated with $A$ is $\nl(A)=\high(A^\bot)$, explicitly:
        $$\nl(A)=\{Z:(\exists a\le_TZ)(\forall b\in A_\sol^r)[\lnot aAb]\}.$$
\end{definition}

Then we have $\nl(\opn{Dom})$ as the class of oracles which compute hyperimmune functions, 
and $\nl(\opn{Esc})$ those which compute dominating functions.
It should be noted that cardinal characteristics and highness classes themselves do not inherently have duals.
There are no canonical Weihrauch problems associated with these objects;
we use the ones most convenient concerning relations between problems and duality.
As an example, $\mathfrak d=\card(\wtow,\wtow,\le)$ but the dual of this Weihrauch problem has associated cardinal $\aleph_0$.

Blass \cite{Bla:2010} used the term \emph{morphism} in place of the term Galois-Tukey connection used by Vojt\'a\v{s} \cite{Voj:1993}.
The following effective notion is proposed in \cite{GKT:2019}, ensuring it also gives relations between highness or nonlowness classes:

\begin{definition}
    An \emph{effective morphism} $\varphi$ from Weihrauch problems $A$ to $B$ is a pair of hyperarithmetic piecewise computable maps 
    $\varphi_\inst:A_\inst\to B_\inst$ and $\varphi_\sol:B_\sol\to A_\sol$ such that whenever $a\in A_\inst$, $b\in B_\sol$, and $\varphi_\inst(a)Bb$,
    then $aA\varphi_\sol(b)$.
    Write $A\to B$ if there is an effective morphism from $A$ to $B$.
\end{definition}

\begin{figure}[h]
    \begin{tikzpicture}
        \node(b) {$B_{\mathrm{inst}},B_{\mathrm{sol}}$};
        \node(a) [below of = b] {$A_{\mathrm{inst}},A_{\mathrm{sol}}$};

        \draw (a.west) edge[->] [out=180,in=180] (b.west) node[xshift=-0.8cm,yshift=0.5cm] {$\varphi_{\mathrm{inst}}$};
        \draw (b.east) edge[->] [out=0,in=0] (a.east) node[xshift=0.8cm,yshift=-0.5cm] {$\varphi_{\mathrm{sol}}$};
    \end{tikzpicture}
    \centering
    \label{effmorph}
    \caption{An effective morphism}
\end{figure}

An effective morphism implies the relations we want in both the set-theoretic and computability-theoretic settings:

\begin{proposition}
    Let $A$ and $B$ be Weihrauch problems. Suppose $A\to B$. Then
    \begin{itemize}
        \item $\card(A)\le\card(B)$, 
        \item $\high(B)\subseteq\high(A)$, and
        \item $\nl(A)\subseteq\nl(B)$.
    \end{itemize}
\end{proposition}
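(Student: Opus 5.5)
We check the three items separately; each follows by pushing witnesses through the two maps of a fixed effective morphism $\varphi=(\varphi_\inst,\varphi_\sol)$ from $A$ to $B$.

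\emph{Cardinality.} Let $C\subseteq B_\sol$ be a complete solution set for $B$ with $|C|=\card(B)$, and set $C'=\varphi_\sol[C]\subseteq A_\sol$. Given $a\in A_\inst$, completeness of $C$ gives $b\in C$ with $\varphi_\inst(a)Bb$. The morphism property then gives $aA\varphi_\sol(b)$, so $C'$ is complete for $A$. Since $|C'|\le|C|$, we get $\card(A)\le\card(B)$. Only the abstract morphism property is used here, not effectivity.

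\emph{Highness.} Let $Z\in\high(B)$, witnessed by $b\le_TZ$ with $cBb$ for every computable $c\in B_\inst$. We propose $\varphi_\sol(b)$ as the witness for $Z\in\high(A)$, which requires two checks.
\begin{itemize}
\item[(i)] $\varphi_\sol(b)\le_TZ$. Since $\varphi_\sol$ is piecewise computable, on a piece containing $b$ it agrees with some Turing functional, so $\varphi_\sol(b)\le_T b\le_T Z$.
\item[(ii)] $\varphi_\sol(b)$ solves every computable instance of $A$. For computable $a\in A_\inst$, piecewise computability of $\varphi_\inst$ makes $\varphi_\inst(a)$ computable relative to $a$, hence computable. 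So $\varphi_\inst(a)\in B^r_\inst$, giving $\varphi_\inst(a)Bb$, and the morphism property yields $aA\varphi_\sol(b)$.
\end{itemize}
The hyperarithmetic condition on the maps is not needed beyond piecewise computability. What matters is that these maps send computable inputs to computable outputs and preserve $\le_T Z$.

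\emph{Nonlowness.} First observe that $\varphi$ induces an effective morphism $\varphi^\bot=(\varphi_\sol,\varphi_\inst)$ from $B^\bot$ to $A^\bot$. Take $b\in B^\bot_\inst=B_\sol$ and $a\in A^\bot_\sol=A_\inst$, and suppose $\varphi_\sol(b)A^\bot a$, i.e.\ $\lnot aA\varphi_\sol(b)$. By the contrapositive of the morphism property, $\lnot\varphi_\inst(a)Bb$, which is $bB^\bot\varphi_\inst(a)$. Applying the highness item to $B^\bot\to A^\bot$ gives $\high(A^\bot)\subseteq\high(B^\bot)$, i.e.\ $\nl(A)\subseteq\nl(B)$.

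The main obstacle is step (i)–(ii) of the highness item: confirming that ``piecewise computable'' (presumably a countable partition into pieces, each with a Turing functional) really gives $\varphi(x)\le_T x$ for each $x$ in the domain. If the pieces are themselves only hyperarithmetic, this pointwise reduction still holds, since each $x$ lies in some piece on which a single functional computes the map. That pointwise fact is all the argument uses.
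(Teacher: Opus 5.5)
Your proof is correct. The paper gives no proof of this proposition; it is quoted from Greenberg, Kuyper, and Turetsky. Your argument is the standard one: push a complete solution set forward along $\varphi_\sol$; use the pointwise reductions $\varphi_\inst(a)\le_T a$ and $\varphi_\sol(b)\le_T b$ that piecewise computability gives; and get the nonlowness item by applying the highness item to the dual morphism $(\varphi_\sol,\varphi_\inst)$ from $B^\bot$ to $A^\bot$.

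One small detail is worth adding. Applying $\varphi_\sol$ to the witness $b$ requires $b\in B_\sol$, and the paper's literal definition of $\high(B)$ does not demand this. It is automatic when $B^r_\inst\neq\emptyset$. If instead $B^r_\inst=\emptyset$, then also $A^r_\inst=\emptyset$, because computable instances map to computable instances. In that case $\high(A)$ contains every oracle, so the inclusion is trivial. The same remark applies to the witness $a\in A_\inst$ in the dual argument.
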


This is very much not an equivalence.
Some relations between associated cardinal characteristics will hold where there is no effective morphism between the Weihrauch problems.
It is even possible that two Weihrauch problems share an associated cardinal characteristic without an effective morphism, 
see \mbox{\cite[Lemma 4.22]{GKT:2019}}.
As demonstrated by the collapsing of relations in Cicho\'n's diagram demonstrated in \cite{BBNN:2015},
relations in the computability setting also do not imply the existence of an effective morphism.
There are often techniques in computability that make heavy use of the countability of the setting, or that we can uniformly approximate
every computable function.

Conventions for and clarifications of notation can be found in Appendix \ref{appnot}.

    \newpage
    \section{Mass Problems of Sequences of Functions}
\label{sf}

The almost disjointness number $\mathfrak a$ and the tower number $\mathfrak t$, Definitions \ref{asdef} and \ref{tsdef} respectively,
are two of the most well-researched cardinal characteristics.
Replacing sets in the definitions of $\mathfrak a$ and $\mathfrak t$ with functions, we can obtain two more cardinal characteristics:

\begin{definition}
    \label{med}
    A family $G\subseteq\wtow$ is called \emph{eventually different} (ED) if any pair of elements agree only on a finite set of inputs. 
    The family $G$ is maximal (MED) if it is maximal among such families under inclusion.
    Define:
        $$\mathfrak a_e=\min\{|G|:G\subseteq\wtow\text{ is MED}\}.$$
\end{definition}
\begin{definition}
    A family $G\subseteq\wtow$ is called a \emph{tower} if it is linearly ordered by $\ge^*$. 
    The family $G$ is maximal if there is no function $f$ such that $f\le^*g$ for every $g\in G$.
    Define:
        $$\mathfrak t_e=\min\{|G|:G\subseteq\wtow\text{ is a maximal tower}\}.$$
\end{definition}

The cardinal $\mathfrak a_e$ was introduced by Zhang \cite{Zha:1999} based on a suggestion by B. Veli\v{c}kovi\'c.
Blass et al.\ \cite{BHZ:2007} give a survey of some fundamental results about $\mathfrak a_e$ and related cardinal characteristics.
Towers in $\wtow$ were first studied by Dordal \cite{Dor:1989} and related to several cardinal characteristics in $\infsets$.

Lempp, Miller, Nies, and Soskova \cite{LMNS:2023} introduce mass problems as an analogy for cardinal characteristics 
in the setting of computability.
They give some examples in mass problems of sequences of computable sets.
We develop this further and study mass problems of sequences of computable functions, specifically, analogues of the cardinal characteristics 
$\mathfrak b,\mathfrak a_e$, and $\mathfrak t_e$.
The analogues as mass problems will be called $\bd,\af$, and $\tf$, respectively.
Their members will be sequences of functions coded as a single universal function, as in Definition \ref{seqcode}.
We relate these to the mass problems $\ts$ and $\as$ of sequences of sets studied in \cite{LMNS:2023}.

\begin{figure}[h]
    \centering
    \begin{tikzpicture}[node distance=3.5em]
        \node[box] (b) {$\smash{\bd\xlongequal{\text{\ref{af=b}}}}\af$};
        \node[box, below of=b] (t) {$\smash{\ts\xlongequal{\text{\cite[$2.2$]{LMNS:2023}}}}\as$};
        \node[box, below of=t, xshift=-3.5em] (p) {$\smash{\ps\xlongequal{\text{\ref{t-=p}}}}\ts^-$};
        \node[box, below of=t, xshift=2.5em] (tf) {$\tf$};
        \node[box, left of=b, xshift=-2.5em] (nl) {$\nonlow$};
        \node[box, right of=t, xshift=2.8em] (hi) {$\hypimm$};

        \draw (b) -- node[anchor=west,xshift=-2pt]{\scriptsize\ref{t<b}} (t) ; 
        \draw[densely dotted] (t) -- (p); 
        \draw[densely dotted] (t) -- node[anchor=west,xshift=1pt]{\scriptsize\ref{tf<t}} (tf);
        \draw (nl) -- node[anchor=east,xshift=-4pt]{\scriptsize\cite[$3.1$]{LMNS:2023}} (t);
        \draw (b) -- node[anchor=west,xshift=3pt]{\scriptsize\ref{hypimm<bd}} (hi);
    \end{tikzpicture}
    \caption{Muchnik relations between mass problems. Dotted lines indicate relations not known to be strict.}
\end{figure}

\setlength{\tabcolsep}{0.8em} {\renewcommand{\arraystretch}{1.4}
\begin{figure}[h]
    \centering
    \begin{tabular}{|l|l|l|}
        \hline
        $\af$       & Def. \ref{afdef}  & MED families of functions. \\ \hline
        $\tf$       & Def. \ref{tfdef}  & Maximal towers of functions. \\ \hline
        $\bd$       & Def. \ref{bddef}  & $\le^*$-unbounded families of functions. \\ \hline
        $\as$       & Def. \ref{asdef}  & MAD families of sets. \\ \hline
        $\ts$       & Def. \ref{tsdef}  & Maximal towers of sets. \\ \hline
        $\ts^-$     & Def. \ref{t-def}  & Maximal weak towers of sets. \\ \hline
        $\ps$       & Def. \ref{psdef}  & Families with the pseudointersection property. \\ \hline
    \end{tabular}
    \caption{Summary of mass problems.}
\end{figure}

Through observing relations to well-studied notions such as hyperimmune and non-low sets, we can obtain separations between some of these mass problems.
This contrasts work in \cite{LMNS:2023} and by the author \cite{McD:2024} on mass problems of some necessarily more complex objects, 
which all coincide with high sets in the Muchnik degrees.

In addition to studying the mass problems, we get results on the descriptive complexity of the sequences of functions themselves.

\subsection{Analogues of $\mathfrak a_e$ and $\mathfrak t_e$}
\label{analoguesaftf}

Recall that in \cite{LMNS:2023}, analogues are defined for the cardinal characteristics $\mathfrak a$ and $\mathfrak t$ as mass problems
consisting of sequences with specific properties.
In the same way, we define the following analogues for $\mathfrak a_e$ and $\mathfrak t_e$:

\begin{definition}[$\af$]
    \label{afdef}
    A sequence $\langle g_e\rangle_{e\in\omega}$ of computable functions is called \emph{eventually different} 
    if its members are pairwise eventually different.
    Such a family is \emph{maximal (MED)} if for any computable function $h$ there is some $e$ such that $g_e=^\infty h$.
    Let $\af$ be the mass problem of MED families.
\end{definition}
\begin{definition}[$\tf$]
    \label{tfdef}
    A sequence $\langle g_e\rangle_{e\in\omega}$ of unbounded computable functions is called a \emph{tower} if
    $g_{e+1}\le^* g_e$ and $g_{e+1}<^\infty g_e$ for every $e$.
    Such a tower is \emph{maximal} if for any unbounded computable function $h$ such that $g_e<^\infty h$ for some $e$.
    Let $\tf$ be the mass problem of maximal towers.
\end{definition}

The notion of an MED family is directly analogous to MAD families of sets.
Given a family of functions, we can consider the corresponding family of sets obtained by taking the graph of each function.
The family of graphs is AD if and only if the family of functions is ED.
The family of graphs is maximal among AD families of graphs if and only the family of functions is MED.
Note that there is no direct correspondence; not every set is identified with the graph of a function.
A similar analogy in the other direction can be obtained by taking the characteristic functions of members of a family of sets.

The notion of a tower of functions is more loosely associated with the notion of a tower of sets.
Each value can potentially decrease many times without ever increasing, rather than only once.
The relations between these classes of families of functions and those of families of sets will be investigated further in \mbox{Section \ref{seqsets}}.

We do not need to only consider mass problems of families with internal structure.
We define an analogue for $\mathfrak b$ in the same way:

\begin{definition}[$\bd$]
    \label{bddef}
    A sequence $\langle f_e\rangle_{e\in\omega}$ of computable functions is called \emph{$\le^*$-unbounded} if there is no computable function $h$
    which dominates every $f_e$.
    Let $\mathcal B$ be the mass problem of $\le^*$-unbounded sequences.
\end{definition}

A clear example is that any sequence which contains every computable function is $\le^*$-unbounded:
for each $h$ computable, $h+1$ is not dominated by $h$.
It is easy to see that there is a $\le^*$-unbounded family below $\emptyset'$: 
let $f_e(x)$ be $\varphi_e(x)$ unless there is some $y\le x$ where $\varphi_e(y)\dvg$, 
if there is then $f_e$ can take the value $0$ for all but finitely many inputs.

This notion of $\le^*$-unbounded families is one of various notions that 
take a property of a single noncomputable object and pull it apart into computable components.
In this case, the property for a single object is hyperimmunity.
A hyperimmune function is one that is not dominated by any computable function; 
a $\le^*$-unbounded family is one with members that are not uniformly dominated by any computable function.
Often in these cases, we can recover the single object property from the property for families:

\begin{remark}
    \label{hypimm<bd}
    For any $\langle f_e\rangle_{e\in\omega}\in\bd$, the map $x\mapsto\max_{e<x}f_e(x)$ is hyperimmune.
    Thus, $\hypimm\le_s\bd$.
\end{remark}

As we show in Corollary \ref{stricthi<b}, this Medvedev relation is strict.
This is not the case in general; we take a brief aside to show that every dominating function computes a $\le^*$-dominating (or cofinal) sequence.
Thus, the following mass problem analogue of the cardinal characteristic $\mathfrak d$ will not provide us with any unexpected results.

\begin{definition}[$\mathcal D$]
    A sequence $\langle f_e\rangle_{e\in\omega}$ of computable functions is called \emph{$\le^*$-dominating} if for any computable function $h$ there is
    some $e$ such that $h\le^* f_e$.
    Let $\mathcal D$ be the mass problem of $\le^*$-dominating sequences.
\end{definition}

The following result also provides a connection in this framework to the main focus of the work in \cite{LMNS:2023}.
It was shown that the mass problem of dominating functions $\domfcn$ is Medvedev equivalent to both the mass problem equivalents
for the ultrafilter number and the independence number.

\begin{proposition}
    \label{domfcn=d}
    $\domfcn\equiv_s\mathcal D$. That is, there are Turing functionals that compute from every $\le^*$-dominating family a dominating function,
    and vice versa.
\end{proposition}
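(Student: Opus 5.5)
For the direction $\domfcn\le_s\mathcal D$, I will use the same diagonal trick as in Remark \ref{hypimm<bd}. Given a code $f$ for a $\le^*$-dominating sequence $\langle f_e\rangle_{e\in\omega}$, set
\[
d(x)=\max_{e\le x} f_e(x)=\max_{e\le x} f(\langle x,e\rangle).
\]
This is uniformly computable from $f$ by a fixed Turing functional. To see that $d$ is dominating, fix a computable $h$. There is some $e$ with $h\le^* f_e$, and for all $x\ge e$ we have $f_e(x)\le d(x)$, so $h\le^* d$.

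For the converse $\mathcal D\le_s\domfcn$, I will build, uniformly from a dominating function $d$, a sequence whose columns are computable and which contains (up to finite modification) a function above every computable function. The main point is that the columns must be computable, so they cannot simply be copies of $d$. The natural candidate is, for each $e$,
\[
f_e(x)=\varphi_e(x) \text{ if } \varphi_e(y)\text{ converges within } d(x) \text{ steps for all } y\le x, \text{ and } f_e(x)=0 \text{ otherwise}.
\]
This mirrors the $\emptyset'$ construction given after Definition \ref{bddef}, with the halting oracle replaced by the time bound $d$.

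The obstacle is that this $f_e$ need not be computable. It depends on $d$ unless the convergence test stabilises in a computable way. If $\varphi_e$ is total, let $t_e(x)$ be the time for $\varphi_e(y)$, $y\le x$, to converge. Then $t_e$ is computable, so $t_e\le^* d$, and $f_e=^*\varphi_e$, which is computable. If $\varphi_e$ is partial with first divergence at $y_0$, then from $x\ge y_0$ onward the test fails and $f_e(x)=0$; below $y_0$ only finitely many values are involved. So $f_e$ is a finite modification of either $\varphi_e$ or $0$, and hence computable in both cases.

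I must also check the case where $\varphi_e$ is total but the time bound fails on infinitely many inputs. Since $t_e\le^* d$, the bound fails only finitely often, so $f_e$ differs from $\varphi_e$ at finitely many points. Each $f_e$ is therefore computable, and the code $\langle x,e\rangle\mapsto f_e(x)$ is uniformly computable from $d$. Finally, for any computable $h$, choose $e$ with $\varphi_e=h$; then $h=^* f_e$, so $h\le^* f_e$. The sequence is therefore $\le^*$-dominating, and both functionals are fixed and uniform, which gives Medvedev equivalence.
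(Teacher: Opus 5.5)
Your proof is correct and follows the paper's argument: the diagonal maximum gives one direction, and for the other direction each column copies $\varphi_e$ under a time bound supplied by the dominating function. The only difference is that the paper uses the bound $e+h(x)$ together with a padding argument to get $f_{e'}=\varphi_e$ exactly. You settle for $f_e=^*\varphi_e$, which already suffices for $\le^*$-domination and makes the padding step unnecessary.
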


\begin{proof}
    It is easy to see $\domfcn\le_s\mathcal D$, the argument is the same as Remark \ref{hypimm<bd}.
    If $\langle f_e\rangle_{e\in\omega}\in\mathcal D$ then the map $x\mapsto\max_{e<x}f_e(x)$ is dominating.

    \vspace*{1em}

    \noindent For the other reduction, let $h\in\domfcn$.
    For each $e$, define
        $$f_e(x)=\begin{cases}\varphi_{e,e+h(x)}(x)&\text{if }(\forall y\le x)\varphi_{e,e+h(x)}(y)\cnv,\\0&\text{otherwise.}\end{cases}$$
    Each $f_e$ is either $\varphi_e$ or takes the value $0$ on all but finitely many inputs, and so $f_e$ is always computable.

    Whenever $\varphi_e$ is a total function, the map $\psi_e:x\mapsto\mu s[\varphi_{e,s}(x)\cnv]$ is also total and computable, 
    and thus dominated by $h$.
    Hence, when $\varphi_e$ is total $\varphi_{e,h(x)}(x)$ converges on all but finitely many $x$, 
    so we can take the maximum of the stages of convergence for all other $x$ to find some $N$ where $\varphi_{e,N+h(x)}(x)$ always converges.
    There will be some index $e'>N$ for $\varphi_e$ and so $f_{e'}=\varphi_e$.
    Thus $\langle f_e\rangle_{e\in\omega}$ contains every computable function and so is $\le^*$-dominating.
\end{proof}

This argument does not work to compute $\le^*$-unbounded families from hyperimmune functions.
We rely on convergence on a cofinite set with a dominating function to ensure that each function of the constructed sequence is computable individually.
Attempting this with a hyperimmune function, we can only determine its behaviour on an infinite set.

It does turn out that a similar construction can be used to show that $\bd$ and $\af$ are Medvedev-equivalent in this setting.
Using the fact that each element of a $\le^*$-unbounded sequence is computable we can work to make each element of the MED family computable.
This result contrasts with the case in set theory; it is consistent with ZFC that $\mathfrak b<\mathfrak a_e$.
The argument relies on the computability setting and on the sequence structure of our families; there is no analogue in set theory.

\begin{proposition}
    \label{af=b}
    $\af\le_s\bd$.
    That is, there is a Turing functional that computes from every $\le^*$-unbounded family of functions a MED family.
\end{proposition}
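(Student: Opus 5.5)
The plan is to use the members of a $\le^*$-unbounded sequence as time bounds, as in the proof of Proposition \ref{domfcn=d}, while building in eventual difference directly by a greedy diagonalisation over a priority ordering. Let $\langle f_i\rangle_{i\in\omega}\in\bd$. For each code $c=\langle e,i\rangle$ we will define a function $g_c$, and the output of the functional will be the sequence $\langle g_c\rangle_{c\in\omega}$. We define $g_c(x)$ by recursion on $c$:
\begin{itemize}
\item for $x<c$, put $g_c(x)=0$;
\item for $x\ge c$, let $S=\{g_{c'}(x):c'<c\}$, a finite set;
\item if $\varphi_{e,f_i(x)}(x)\cnv=v$ and $v\notin S$, put $g_c(x)=v$;
\item otherwise put $g_c(x)=\min(\omega\smallsetminus S)$.
\end{itemize}

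\emph{Uniformity and computability.} Computing $g_c(x)$ uses only $f_i(x)$ and the values $g_{c'}(x)$ for $c'<c$. By recursion these use only finitely many values of finitely many of the $f_j$. Hence the whole sequence is computed uniformly by a single Turing functional from the universal function coding $\langle f_i\rangle$. Moreover, each $f_j$ is computable, so by induction on $c$ each $g_c$ is a computable function. This is where we use that $\bd$ consists of sequences of \emph{computable} functions.

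\emph{Eventual difference.} If $c'<c$, then for every $x\ge c$ the construction guarantees $g_c(x)\notin\{g_{c''}(x):c''<c\}$. In particular $g_c(x)\neq g_{c'}(x)$, so $g_c$ and $g_{c'}$ agree at most below $c$. Thus the family is pairwise eventually different.

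\emph{Maximality.} This is the step that needs the unboundedness hypothesis. Let $h$ be computable, say $h=\varphi_e$, and let $\psi(x)=\mu s[\varphi_{e,s}(x)\cnv]$. Then $\psi$ is total and computable, so it does not dominate every $f_i$. Fix $i$ with $f_i(x)\ge\psi(x)$ for infinitely many $x$, and set $c=\langle e,i\rangle$.

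For each such $x\ge c$ we have $\varphi_{e,f_i(x)}(x)\cnv=h(x)$. By the construction, either $h(x)\in\{g_{c'}(x):c'<c\}$ or $g_c(x)=h(x)$. In every case $h(x)=g_{c'}(x)$ for some $c'\le c$. These are infinitely many $x$ distributed among finitely many indices, so by pigeonhole some $g_{c'}$ with $c'\le c$ satisfies $g_{c'}=^\infty h$. Hence the family is MED, and $\af\le_s\bd$.

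The only delicate point I expect is this last step. We cannot ensure that $g_c$ itself copies $h$ infinitely often, because the value may already be taken by an earlier function. The resolution is to accept that the agreement is absorbed by one of finitely many higher-priority functions, and to conclude by pigeonhole.
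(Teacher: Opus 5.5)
Your proposal is correct and is essentially the paper's argument: you use $f_i$ as a time bound for $\varphi_e$ in the function indexed $\langle e,i\rangle$, avoid at each input the finitely many values of earlier functions, and obtain maximality by pigeonhole over the indices $\le\langle e,i\rangle$. The differences are cosmetic. The paper avoids earlier values at every input, with fallback value $1+\max_{n<\langle e,k\rangle}g_n(x)$. You instead set $g_c(x)=0$ for $x<c$ and use $\min(\omega\smallsetminus S)$ as the fallback.
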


\begin{proof}
    Let $\langle f_e\rangle_{e\in\omega}\in\mathcal B$.
    We will construct a family $g=\langle g_{e,k}\rangle_{e,k\in\omega}$ 
    where each $g_{e,k}$ is an attempt to make a computable function which is infinitely often equal to $\varphi_e$.

    For each $e,k$ define:
        $$g_{e,k}(x)=\begin{cases}\varphi_{e,f_k(x)}(x)&\text{if }(\forall n<\langle e,k\rangle)[\varphi_{e,f_k(x)}(x)\cnv\neq g_n(x)],\\
            1+\max_{n<\langle e,k\rangle}g_n(x)&\text{otherwise.}\end{cases}$$
    This uses only a single $f_k$ to compute its value, and hence $g_{e,k}$ is computable.
    At no point can $g_{e,k}(x)$ be defined to be equal to $g_n(x)$ for $n<\langle e,k\rangle$, so $g$ is ED.
    
    For each index $e$, if $\varphi_e$ is total then the function $\psi_e:x\mapsto\mu s[\varphi_{e,s}(x)\cnv]$ is also total and computable.
    Hence, there is some $k$ such that $f_k>^\infty\psi_e$.
    For the infinitely many $x$ where $f_k(x)>\psi_e(x)$, we have $\varphi_{e,f_k(x)}(x)\cnv$.
    For each such $x$, either $g_{e,k}(x)=\varphi_e(x)$ or there is $n<\langle e,k\rangle$ with $g_n(x)=\varphi_e(x)$.
    Thus, there must be some $n\le\langle e,k\rangle$ where $g_n=^\infty\varphi_e$, and so $g$ is MED.
\end{proof}

\begin{remark}
    Every MED family is an $\le^*$-unbounded family of functions.
    Thus, $\bd\equiv_s\af$
\end{remark}

\begin{proof}
    We see that $\bd\le_s\af$ holds by inclusion: if $h$ is computable and $g_e=^\infty h+1$ then ${h<^\infty g_e}$. 
    Thus, together with Proposition \ref{af=b} we obtain the equivalence ${\bd\equiv_s\af}$.
\end{proof}

The specification of towers using a \emph{descending} sequence of functions is not arbitrary in the computability case.
If we replace the above definition of towers with one for \emph{ascending} sequences of functions, 
then the mass problem will coincide in the Medvedev degrees with $\bd$ and $\af$:
Every maximal ascending tower is an $\le^*$-unbounded sequence. 
Given any $\le^*$-unbounded sequence, we can obtain a maximal ascending tower.
In fact, we can obtain some other nice properties which make this result useful to reference:

\begin{remark}
    \label{niceb}
    Every $\le^*$-unbounded sequence of functions computes an $\le^*$-unbounded sequence of 
    strictly increasing functions which is linearly \mbox{ordered by $<$}.
\end{remark}

\begin{proof}
    Let $\langle f_e\rangle_{e\in\omega}\in\bd$.
    For each $e$ we have $f_e'(x)=x+\max_{k<x}f_e(k)$ strictly increasing.
    The sequence $\langle g_e\rangle_{e\in\omega}$ defined by $g_e=\sum_{i<e}(1+f_i')$ is as required.
\end{proof}

The following result can be obtained from results in the next section; we give another proof via Proposition \ref{tf<t} and Theorem \ref{t<b}.
However, the tools used for a direct proof are helpful for working with descending towers of functions in general.
We include such a proof.

\begin{theorem}
    \label{tf<b}
    $\tf\le_s\bd$.
    That is, there is a Turing functional that from every $\le^*$-unbounded sequence of functions computes a maximal (descending) tower of functions.
\end{theorem}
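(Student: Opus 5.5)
The plan is to build the tower level by level from a given $\langle f_k\rangle_{k\in\omega}\in\bd$, using the time-bound trick from Propositions \ref{domfcn=d} and \ref{af=b}. First apply Remark \ref{niceb}, so that each $f_k$ is strictly increasing and $f_0<f_1<\cdots$ pointwise; this costs nothing, since it is a uniform reduction. Fix a computable bijection $n\mapsto\langle e,k\rangle$. The level $g_{n+1}$ with $n=\langle e,k\rangle$ will attempt to defeat $\varphi_e$ using the clock $f_k$, and it will be computed from $g_n$, $f_k$ and $\varphi_e$ only. By induction, each $g_n$ then uses only finitely many $f_j$, and so each $g_n$ is computable. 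Start with $g_0(x)=x$.

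To keep every level unbounded and to keep the inclusions strict, I will reserve disjoint infinite computable sets of arguments. Let $R_n=\{x: 2^n\,\|\,x\}$, the $x$ divisible by $2^n$ but not by $2^{n+1}$, and let $S_n=\{x:2^{n+1}\mid x\}$. On $S_{n+1}$ we set $g_{n+1}=g_n$; inductively this means $g_n(x)=x$ on $S_n$, and hence every level is unbounded. The step from $g_n$ to $g_{n+1}$ may change values only on the complement of $S_{n+1}$, and on the freshly released set $R_n$ (where $g_n(x)=x$) we never allow $g_{n+1}(x)=g_n(x)$. Thus $g_{n+1}\le g_n$ everywhere, and on $R_n$ we have $g_{n+1}<g_n$ for $x>0$, which gives $g_{n+1}\le^* g_n$ and $g_{n+1}<^\infty g_n$. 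On $R_n$ the attack on $\varphi_e$ is: if $\varphi_{e,f_k(x)}(y)\cnv$ for all $y\le x$, put $g_{n+1}(x)=\min\bigl(g_n(x)-1,\max(\varphi_e(x)-1,0)\bigr)$, and otherwise put $g_{n+1}(x)=g_n(x)-1$. Off $R_n\cup S_{n+1}$ we simply copy $g_n$.

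For maximality, let $h=\varphi_e$ be total and unbounded. The run-time function $\psi_e:x\mapsto\mu s[(\forall y\le x)\,\varphi_{e,s}(y)\cnv]$ is computable. Because the $f_k$ form an unbounded family, applying this to a computable function built from $\psi_e$ and adapted to the sets $R_{\langle e,k\rangle}$ should yield some $k$ with $f_k(x)>\psi_e(x)$ for infinitely many $x\in R_{\langle e,k\rangle}$. At each such $x$ with $h(x)\ge1$, the construction gives $g_{n+1}(x)<h(x)$. The hard part is to arrange this for a single $k$. The sets $R_n$ move with $k$, so I would replace $h$ by a function that is large exactly where $h$ is large, transported into each $R_{\langle e,k\rangle}$, and then dominate uniformly in $k$ using Remark \ref{niceb}.

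\textbf{Main obstacle.} The weak point is that $h$ may be positive only on arguments that lie in a reserved set $S_{n+1}$, or outside $R_n$. In that case $g_{n+1}$ never drops below $h$ at those arguments. To handle this I would reindex so that the levels attacking $\varphi_e$ act on all $x$ outside a sparse reserved set. That set is chosen dynamically: reserve $x$ for $S_{n+1}$ only when $x$ is a new record of $g_n$, and also when the clock check fails. Unboundedness is then kept through $g_{n+1}\ge\min(g_n,\text{records})$, while $h$ is attacked at every large argument. I expect the real work of the proof to lie in balancing these two demands, unboundedness of each level against undercutting every unbounded $h$ infinitely often, in a way that uses only finitely many $f_k$ per level.
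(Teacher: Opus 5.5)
You have a genuine gap at maximality, and it is fatal to the construction as written, not a loose end. Each level changes values only on its own set $R_n$, where $g_n(x)=x$. The sets $R_n$ are pairwise disjoint, and every later level copies those values, so they are frozen once set. Take $n=\langle e,0\rangle$ with $\varphi_e$ the identity. For $x\in R_n$ both branches of your definition give $g_{n+1}(x)=x-1$, whatever the clock does. Earlier levels have $g_m(x)=x$ there, and later levels keep $x-1$. So $h(x)=x-1$ on $R_n$ and $h(x)=0$ elsewhere is computable and unbounded, yet $h\le g_m$ everywhere for every $m$, and no level satisfies $g_m<^\infty h$.

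The repair you sketch does not close this. Suppose you reserve the new records of $g_n$, or, more leniently, the points where $g_n$ exceeds the running maximum of $g_{n+1}$. Then every point is reserved as soon as $g_n$ is strictly increasing, e.g.\ for $g_0(x)=x$, so $g_1=g_0$ and nothing is ever attacked. Beyond that, the balancing you yourself call the real work is left undone, so no maximality argument is actually given.

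The missing idea is to undercut $h$ only where $h$ is large compared with its own past, namely at its record points $\tilde h(x)=\mu n[h(n)>x]$. This should be done with slow, monotone functions, so that unboundedness of each level comes for free. The paper does this with no per-index attack, clock, or reserved sets: with $f$ in the form of Remark~\ref{niceb}, it takes $g_e=\tilde f_e$. These are computable and unbounded because each $f_e$ is total. They satisfy $\tilde f_{e+1}\le\tilde f_e$ pointwise because $f_e<f_{e+1}$. A point where they agree forces a strict inequality at a larger input, by the interleaving of the values of $f_e$ and $f_{e+1}$. For maximality, $\tilde h$ is computable, so some $f_e>^\infty\tilde h$, and for each such $x$ one gets $\tilde f_e(\tilde h(x))\le x<h(\tilde h(x))$.

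The same idea would also rescue your level-by-level scheme. At level $n+1$ with $n=\langle e,k\rangle$, put $g_{n+1}(x)=\max(g_n(x)-1,0)$. The exception is when $f_k(x)$ steps certify $\varphi_e$ on $[0,x]$ and $x$ is a record of $\varphi_e$; there put $g_{n+1}(x)=\min\bigl(\max(g_n(x)-1,0),\max_{y<x}\varphi_e(y)\bigr)$, which is below $\varphi_e(x)$. The running maximum tends to infinity whenever there are infinitely many such $x$, so each level stays unbounded. For total unbounded $h=\varphi_e$, let $p(y)$ be the convergence time of $\varphi_e$ on $[0,r]$, where $r\ge y$ is the first record. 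Then $p$ is computable, so some $f_k>^\infty p$ exists. Since $f_k$ is increasing, that $k$ catches infinitely many records.
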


In fact, this relation is strict in that there is a maximal tower which computes no $\le^*$-unbounded sequence.
This follows from Corollary \ref{septsbd} below.

Roughly speaking, we want to take a well-behaved ascending tower 
and map each of its constituent functions to something that behaves like an inverse.
The following definition is sufficient:

\begin{definition}
    \label{tildeop}
    Let $f\in\wtow$ be unbounded. Define $\tilde f\le_T f$ by
        $$\tilde f(x)=\mu n[f(n)>x].$$
\end{definition}

Unless $f$ is the identity function, this is not a true inverse, but it achieves the goal of mapping fast-growing functions to slow-growing ones.
We can observe that for nondecreasing functions, $\tilde\cdot$ is an involution:
    $$\smash{\tilde{\tilde f}}(x)=\mu n[\mu k[f(k)>n]>x]=\mu n[k\le x\to f(k)\le n]=\max_{k\le x}f(k)=:f'(x).$$

Now we can prove the result:

\begin{proof}[Proof (of Theorem \ref{tf<b})]
    Let $f=\langle f_e\rangle_{e\in\omega}\in\bd$.
    Assume without loss of generality that $f$ is in the nice form described in Remark \ref{niceb}.
    We claim that $g=\langle\tilde f_e\rangle_{e\in\omega}$ is a (descending) tower of functions.

    Fix some index $e$.
    For all $x$ we have 
        $$\tilde f_e(x)=\mu n[f_e(n)>x]\ge\mu n[f_{e+1}(n)>x]=\tilde f_{e+1}(x),$$ 
    as $f_{e+1}>f_e$.
    Hence $\langle\tilde f_e\rangle_{e\in\omega}$ is linearly ordered by $\ge$.

    We show that inequality holds on an unbounded and hence infinite set.
    Suppose we have $x$ such that $\tilde f_e(x)=\tilde f_{e+1}(x)=:k$.
    By the definition of $\tilde\cdot$ and the assumptions on $f$, we have 
        $$f_e(k-1)<f_{e+1}(k-1)\leq x<f_e(k)<f_{e+1}(k)$$
    and $k$ is least such that this holds. 
    Thus, we get $\tilde f_{e+1}(f_e(k))=k<\tilde f_e(f_e(k))$, giving us an inequality for an input greater than $x$.
    Hence, the set of points where inequality holds is unbounded.

    \begin{figure}[h]
        \centering
        \begin{tikzpicture}
            \draw[black,thick,dotted] (3,0) -- (-3,0) node[anchor=east]{$x$};
            \draw[black,thick,dotted] (3,0.75) -- (-3,0.75) node[anchor=east]{$f_e(k)$};
            \draw[black,thick,dotted] (-0.75,2) -- (-0.75,-2) node[anchor=west,xshift=-1.5em,yshift=-0.3em]{$k-1$}; 
            \draw[black,thick,dotted] (0.75,2) -- (0.75,-2) node[anchor=west,xshift=-1.5em,yshift=-0.3em]{$k=\tilde f_e(x)$}
                node[anchor=west,xshift=-0.65em,yshift=-1.8em]{$=\tilde f_{e+1}(f_e(k))$};
            \filldraw[black] (-0.75,-1.5) circle (2pt) node[anchor=east]{$f_e(k-1)$};
            \filldraw[black] (-0.75,-0.75) circle (2pt) node[anchor=east]{$f_{e+1}(k-1)$};
            \filldraw[black] (0.75,0.75) circle (2pt) node[anchor=north east]{$f_e(k)$};
            \filldraw[black] (0.75,1.5) circle (2pt) node[anchor=east]{$f_{e+1}(k)$};
        \end{tikzpicture}
        \caption{Illustration of the argument that $\langle\tilde f_e\rangle_{e\in\omega}$ is strictly ordered by $\ge^*$.}
    \end{figure}

    To show the tower $g$ is maximal, let $h$ be an arbitrary unbounded computable function.
    Fix $e$ such that the set $I=\{x:f_e(x)>\tilde h(x)\}$ is infinite.
    For $x\in I$ we have that $\tilde f_e(\tilde h(x))\le x<h'(\tilde h(x))$ by definitions.
    As $h'$ is the cumulative maximum of $h$, and $\tilde h(x)$ is always the least input to $h$ that crosses a threshold,
    we always have $h'(\tilde h(x))=h(\tilde h(x))$.
    Thus, for $x\in I$ we have $\tilde f_e(\tilde h(x))<h(\tilde h(x))$ and so $\tilde f_e<^\infty h$ as required.
\end{proof}

To verify that these notions are nontrivial, we show that no maximal tower of functions is computable:

\begin{proposition}
    Every $g=\langle g_e\rangle_{e\in\omega}\in\tf$ computes an unbounded function $p$ such that $p\le^*g_e$ for every $e$.
    By maximality of $g$, $p$ and thus $g$ are necessarily noncomputable.
\end{proposition}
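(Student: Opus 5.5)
We construct $p$ by a diagonal ``minimum'' procedure. Since $g_{e+1}\le^* g_e$ for every $e$, the finite minimum $m_n(x)=\min_{e\le n}g_e(x)$ agrees with $g_n(x)$ for all sufficiently large $x$. The natural candidate is $p(x)=\min_{e\le n(x)}g_e(x)$ for a slowly growing, $g$-computable cutoff $n(x)$. For each fixed $e$ and all $x$ with $n(x)\ge e$ we have $p(x)\le g_e(x)$, so $p\le^*g_e$ as soon as $n$ is unbounded. The real work is making $p$ unbounded. We choose $n(x)$ to increase only after $\min_{e\le n}g_e$ has become large enough.

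Concretely, we define $p$ in stages using thresholds $x_0<x_1<\cdots$ computed from $g$. Set $x_0=0$. Given $x_n$, let $x_{n+1}$ be the least $x>x_n$ such that $\min_{e\le n+1}g_e(y)\ge n+1$ for all $y\ge x$. This set of $y$ is cofinite, because each $g_e$ is unbounded and $g_{n+1}\le^* g_e$ for $e\le n+1$. However, ``for all $y\ge x$'' is not decidable from $g$, so we need a computable substitute. First we replace each $g_e$ by its nondecreasing lower envelope $\hat g_e(x)=\min_{y\ge x}g_e(y)$. This is also not computable, so instead we apply the operation $\tilde\cdot$ of Definition~\ref{tildeop}. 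That is, we define
$$p(x)=\max\Bigl\{k\le x : (\forall e\le k)\,\bigl(g_e(x)\ge k\bigr)\Bigr\},$$
or more simply $p(x)=\max\{k\le x:\min_{e\le k}g_e(x)\ge k\}$. This $p$ is computable from $g$.

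We then check the two properties.

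\emph{Domination.} Fix $e$. For every $x$, either $p(x)<e$, in which case $p(x)\le e$ trivially, or $p(x)=k\ge e$, in which case $g_e(x)\ge k=p(x)$. So $p(x)\le \max(g_e(x),e)$. Since $g_e$ is unbounded this is not quite $\le^*$, so we refine: we replace $p$ by $p(x)\dot-\,c$, or argue directly. The points where $p(x)\le e$ but $g_e(x)<p(x)$ are exactly the points where $g_e(x)<e$. Those need not be finitely many, because $g_e$ is unbounded but not necessarily tending to infinity. We fix this by using the constraint $g_e(x)\ge k$ for all $e\le x$, with $p(x)$ taken as the largest $k$ meeting it, capped by $\min_{e\le x}g_e(x)$. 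This loses unboundedness, so we accept the simpler plan: $p$ only needs $p\le^* g_e$, and the minimum over $e\le k$ already gives $p(x)\le g_e(x)$ whenever $p(x)\ge e$. For the finitely many values $p(x)<e$ we need $g_e(x)\ge p(x)$ as well, which we enforce by requiring $g_j(x)\ge k$ for all $j\le \max(k,\cdot)$. We will sort out this bookkeeping in the writeup.

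\emph{Unboundedness.} This is the main obstacle. We must show that for each $k$ there are infinitely many $x$ with $\min_{e\le k}g_e(x)\ge k$. Since $\min_{e\le k}g_e=^*g_k$, it suffices that $g_k(x)\ge k$ infinitely often, which holds because $g_k$ is unbounded. This gives $p$ unbounded, though not necessarily tending to infinity, which is all that is required.

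Finally, if $p$ were computable it would be an unbounded computable function with $p\le^* g_e$ for all $e$. Maximality of $g$ gives some $e$ with $g_e<^\infty p$, and since $p\le^*g_e$ this is a contradiction. Hence $p$, and therefore $g$, is noncomputable.
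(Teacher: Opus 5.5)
Your proposal has a genuine gap. The function you finally settle on, $p(x)=\max\{k\le x:\min_{e\le k}g_e(x)\ge k\}$, is unbounded but does not satisfy $p\le^*g_e$, and you explicitly defer this point (``we will sort out this bookkeeping'').

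The problem is that the set of indices $e$ whose values bound $p(x)$ is tied to the value $p(x)$ itself. At every $x$ with $p(x)=k<e$ nothing constrains $g_e(x)$, and such $x$ can occur infinitely often. Concretely, take a maximal tower of sets $\langle G_e\rangle_{e\in\omega}$ and the maximal tower of functions from Proposition~\ref{tf<t}: $g_e(x)=x$ for $x\in G_e$ and $g_e(x)=0$ otherwise. For each of the infinitely many $x\ge1$ in $(G_0\cap G_1)\smallsetminus G_2$, the value $k=1$ qualifies, so $p(x)\ge1>0=g_2(x)$. Hence $p\not\le^*g_2$.

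Your repair (bounding by all $g_j$ with $j\le x$) restores domination but, as you note, loses unboundedness, and nothing in the proposal reconciles the two. There is also a false step earlier. The set of $y$ with $\min_{e\le n+1}g_e(y)\ge n+1$ need not be cofinite, since an unbounded $g_e$ may take small values infinitely often, as you yourself observe later.

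The missing idea is to let the cutoff depend on the position $x$ rather than on the value. You then only require that a large value is \emph{attained} somewhere in each block, which is an existential search and hence computable from $g$. For instance:
\begin{itemize}
\item let $x_0=0$;
\item let $y_n$ be the least $y\ge x_n$ with $\min_{e\le n}g_e(y)\ge n$ (it exists because $\min_{e\le n}g_e=^*g_n$ and $g_n$ is unbounded);
\item put $x_{n+1}=y_n+1$;
\item set $p(x)=\min_{e\le n}g_e(x)$ for $x\in[x_n,x_{n+1})$.
\end{itemize}
Then $p(y_n)\ge n$, so $p$ is unbounded, and $p(x)\le g_e(x)$ for all $x\ge x_e$. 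This is your opening plan carried out correctly.

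The paper does essentially the same thing, with $p$ equal to $0$ off a sparse sequence. It chooses $x_{n+1}>x_n$ with $g_{n+1}(x_{n+1})>g_n(x_n)$ and $g_{n+1}(x_{n+1})\le g_k(x_{n+1})$ for all $k\le n$, and sets $p(x_n)=g_n(x_n)$. Then every nonzero value of $p$ beyond $x_e$ is bounded by $g_e$. Your final paragraph deducing noncomputability from maximality is fine once such a $p$ is available.
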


\begin{proof}
    Let $g=\langle g_e\rangle_{e\in\omega}\in\tf$.
    We define $p$ to be unbounded on a sequence $\langle x_n\rangle_{n\in\omega}$, and $0$ elsewhere.
    Define $x_n$ recursively as follows:
    Let $x_0=0$.
    Define:
        $$x_{n+1}=\min\{x>x_n:g_{n+1}(x)>g_n(x_n)\text{ and }(\forall k\le n)[g_{n+1}(x)\le g_k(x)]\}.$$
    This is well defined as each function $g_e$ is unbounded, and for $k<e$ we have $g_e\le^*g_k$.
    Define:
        $$p(x)=\begin{cases}g_n(x)&\text{if }x=x_n,\\0&\text{otherwise.}\end{cases}$$
    The function $p$ is strictly increasing on the sequence $\langle x_n\rangle_{n\in\omega}$ and hence unbounded.
    For each $n$ and $x\ge x_n$ we have $p(x)\le g_n(x)$, and so $p\le^* g_n$.
\end{proof}

We take a moment to consider a natural property of such associated functions:

\begin{remark}
    If $\langle g_e\rangle_{e\in\omega}$ is a maximal tower, and $p$ is a function such that $p\le^*g_e$ for every $e$
    then $p$ does not dominate any unbounded computable function.
\end{remark}

\begin{proof}
    For every unbounded computable function $h$, there is an index $e$ such that ${p\le^*g_e<^\infty h}$.
\end{proof}

There would be some hope that we can study functions that do not dominate any unbounded computable function in order to find a lower bound
on the complexity of towers of functions.
Unfortunately, it is not so.

Generally, the easiest approach to constructing such a function is to ensure it has value 0 
on a set which meets every infinite computable set infinitely often, its behaviour elsewhere has no relevant effect.
In this way, we can observe that there is such a function in every noncomputable degree:

\begin{proposition}
    Suppose $Z$ is a noncomputable set. 
    Then there is an unbounded function $p\equiv_TZ$ which does not dominate any unbounded computable function.
\end{proposition}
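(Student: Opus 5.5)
The plan is to follow the hint preceding the statement. We make $p$ take the value $0$ on a set $S$ that meets every infinite computable set infinitely often, and we encode $Z$ on the complement of $S$ in a way that forces $p$ to be unbounded. A bi-immune set would be convenient for $S$, but a noncomputable $Z$ need not compute one (there are noncomputable degrees with no bi-immune member), so instead we extract "$0$"-positions directly from $Z$. Let $z_n=\chi_Z\restriction n$ and code the strings $z_n$ canonically as numbers. The idea is that a computable unbounded $h$ dominated by $p$ would let us compute $Z$, because $p$ would have to be large wherever $h$ is large.

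First we fix the construction. Let $c(n)$ be the canonical code of $z_n$, which is strictly increasing in $n$ if we code by length first. Define $p(x)=n$ if $x=c(n)$ for some $n$, and $p(x)=0$ otherwise. Then $p\equiv_T Z$, since $p$ is computed from $Z$ by checking whether $x$ codes an initial segment of $Z$ and reading off its length. Conversely, $Z(n)$ is recovered from $p$ by searching among the finitely many strings of length $n+1$ for the one whose code $x$ has $p(x)=n+1$. The function $p$ is unbounded because it takes every value $n\ge1$.

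Second, we suppose toward a contradiction that $h$ is computable and unbounded with $h\le^*p$. Choose $N$ such that $h(x)\le p(x)$ for all $x\ge N$. Then for $x\ge N$ with $h(x)>0$ we must have $p(x)>0$, so $x=c(n)$ for some $n\ge h(x)\ge1$, and therefore $x$ codes an initial segment of $Z$. The set $W=\{x\ge N:h(x)>0\}$ is computable, and it is infinite because $h$ is unbounded. Every element of $W$ codes an initial segment of $Z$, and these segments have unbounded lengths: $W$ is infinite, the codes are injective, and only finitely many strings have any given length. So $Z$ is computable. To compute $Z(m)$, search $W$ for an element coding a string of length greater than $m$ and read off its $m$th bit. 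This contradicts the noncomputability of $Z$.

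The construction is simple, so the main obstacle is the bookkeeping. We must make sure that $p(x)>0$ holds exactly on the codes of initial segments of $Z$, including the convention for $z_0$, to which we can simply assign the value $0$. We must also make sure the reduction $Z\le_T p$ is genuinely effective, which works because each length has only finitely many candidate strings.
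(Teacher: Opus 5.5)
Your proof is correct and is essentially the paper's argument. The paper also places the nonzero values of $p$ exactly on the codes of initial segments of $Z$; it sets $p(x)=x$ there rather than the length, which makes no difference. Where the paper simply cites that this set of initial segments is immune, your argument that the computable set $W=\{x\ge N:h(x)>0\}$ would compute $Z$ is a direct proof of that immunity.
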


\begin{proof}
    Define $I=\{Z\restriction n:n\in\omega\}$, where the initial segments of $Z$ are identified with natural numbers.
    The set $I$ is immune and $I\equiv_TZ$.
    Define:
        $$p(x)=\begin{cases}x&\text{if }x\in I,\\0&\text{otherwise.}\end{cases}$$
    The set $I$ is infinite, so $p$ is unbounded.
    If $h$ is an unbounded computable function, then the set $H=\{x:h(x)>0\}$ is infinite and computable, and so $H\smallsetminus I$ is infinite.
    For each $x\in H\smallsetminus I$ we have $p(x)=0<h(x)$ and so $p$ does not dominate $h$.
    We can recover $I$ as $\{x:p(x)>0\}$ and so their degrees coincide.
\end{proof}

In the case that such a function happens to be monotone, we can observe that then the function must have hyperimmune degree:

\begin{proposition}
    Let $p$ be an unbounded and monotone function that does not dominate any unbounded computable function.
    Then $p$ computes a hyperimmune function.
\end{proposition}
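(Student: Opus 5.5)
The plan is to take the pseudo-inverse from Definition \ref{tildeop}: let $\tilde p(x)=\mu n[p(n)>x]$. Since $p$ is unbounded, $\tilde p$ is total and $\tilde p\le_T p$. I claim that $\tilde p$ is hyperimmune. Intuitively, if some computable function bounded how long $p$ takes to cross each threshold, then $p$ would grow at a computably guaranteed rate. Inverting that rate would give an unbounded computable function that $p$ dominates, which is excluded by hypothesis.

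Suppose for contradiction that a computable $h$ dominates $\tilde p$, say $\tilde p(x)\le h(x)$ for all $x\ge N$. Replacing $h$ by $x\mapsto x+\max_{k\le x}h(k)$, we may assume $h$ is strictly increasing; it is still computable and still dominates $\tilde p$. Since $p$ is monotone (nondecreasing) and $p(\tilde p(x))>x$ by definition, we get $p(h(x))\ge p(\tilde p(x))>x$ for every $x\ge N$.

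Next define the computable inverse $q(y)=\max\{x: h(x)\le y\}$, with $q(y)=0$ if $h(0)>y$. Because $h$ is strictly increasing, this maximum is found by a bounded search, so $q$ is computable, and $q$ is unbounded since $q(h(x))\ge x$. Now take $y\ge h(N)$ and set $x=q(y)$. Then $x\ge N$ and $h(x)\le y$, so monotonicity gives
$$p(y)\ge p(h(x))>x=q(y).$$
Hence $p$ dominates the unbounded computable function $q$, contradicting the hypothesis. Therefore no computable function dominates $\tilde p$, so $\tilde p$ is a hyperimmune function computable from $p$.

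The only delicate step is the step that uses monotonicity of $p$ to transfer the bound from the points $h(x)$ to all inputs $y$. This is exactly where the hypothesis is needed, as the preceding proposition, with $p$ vanishing off an immune set, shows. I would also check the edge conventions: that $q$ is total at small $y$, and that the domination holds from $h(N)$ onward.
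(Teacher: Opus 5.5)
Your proof is correct and follows essentially the same route as the paper. Both arguments take $\tilde p$ as the hyperimmune function and invert the would-be computable bound $h$ to obtain an unbounded computable function that $p$ is forced to dominate; you argue by contradiction after normalising $h$ to be strictly increasing, and for such $h$ your $q$ is just $\tilde h-1$ on $y\ge h(0)$, which supplies the details the paper omits. Read contrapositively, your argument places the escape points of $\tilde p$ at $\tilde h(x)-1$ for $x$ with $p(x)<\tilde h(x)$, whereas the paper's sketch names $\tilde h[I]$ and is off by one: for $h(x)=x$ the escape points are exactly $I$, not $\tilde h[I]=\{x+1:x\in I\}$.
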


\begin{proof}
    Recall the $\tilde\cdot$ operator from Definition \ref{tildeop}.
    We show that $\tilde p$ is not hyperimmune-free. 
    Let $h$ be computable. If $h$ is bounded, we are done.
    In the unbounded case, the argument is analogous to the maximality argument in Theorem \ref{tf<b}, so we omit details:
    If $h$ is unbounded, then $\tilde h$ is well-defined and so the set $I=\{x:p(x)<\tilde h(x)\}$ is infinite.
    As both $p$ and $\tilde h$ are monotone, $\tilde h[I]$ is an infinite set of witnesses to $\tilde p(x)>^\infty h(x)$.
\end{proof}

\subsection{Medvedev relations between sequences of sets and of functions}
\label{seqsets}

Recall Definitions \ref{asdef} and \ref{tsdef} as given in \cite{LMNS:2023} for 
the mass problems $\as$ and $\ts$ of MAD families and maximal towers of sets, respectively.
Also, consider the following notions:

\begin{definition}[$\ts^-$]
    \label{t-def}
    A sequence of sets $\langle G_e\rangle_{e\in\omega}$ is called a \emph{weak tower} if it is linearly ordered by $\supseteq^*$.
    A weak tower is maximal if there is no infinite computable set $R$ such that $R\subseteq^*G_e$ for all $e$.
    Let $\ts^-$ be the mass problem of maximal weak towers.
\end{definition}

\begin{definition}[$\ps$]
    \label{psdef}
    Recall that a family has the \emph{strong finite intersection property} (SFIP) if any finite subfamily has infinite intersection.
    A family $\langle G_e\rangle_{e\in\omega}$ is said to have the \emph{pseudointersection property} if it has the SFIP and there is
    no infinite computable set $R$ such that $R\subseteq^*G_e$ for all $e$. 
    Let $\ps$ be the mass problem of families with the pseudointersection property.
\end{definition}

In the set theory case, there is no difference between strong and weak towers as we define them here, 
any weak tower in set theory yields a strong tower of at most the same cardinality by taking a representative from each $=^*$-equivalence class.
In computability theory, there is no obvious way to do this.
For cardinal characteristics, we have only $\mathfrak p=\mathfrak t\le\mathfrak a$, 
so weak towers may provide a closer correspondence to the set theory case.
It is not currently known whether or not $\ts^-\equiv_s\ts$, but there is no clear evidence for such a reduction.

The set theory argument of $\mathfrak p=\mathfrak t$ is the celebrated work of Malliaris and Shelah \cite{MS:2016}.
We obtain the analogous result, trivial in comparison:

\begin{remark}
    \label{t-=p}
    $\ts^-\equiv_s\ps$.
    That is, there are Turing functionals that from every maximal weak tower compute a family with the pseudointersection property
    and vice versa.
\end{remark}

\begin{proof}
    Every maximal weak tower has the pseudointersection property, as every finite subset has a $\subseteq^*$-minimal element.
    Hence, $\ts^-\le_s\ps$ by inclusion.

    If $\langle G_e\rangle_{e\in\omega}\in\ps$ then the sequence of partial intersections 
    $\langle\bigcap_{k\le e}G_k\rangle_{e\in\omega}$ is a weak maximal tower.
    Hence, $\ps\le_s\ts^-$.
\end{proof}

We will show the connections between the mass problems for families of sets and those for functions defined above.

\begin{proposition}
    \label{tf<t}
    $\tf\le_s\ts$.
    That is, there is a Turing functional that from every maximal tower of sets computes a maximal tower of functions.
\end{proposition}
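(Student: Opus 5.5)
The plan is to turn each set into a function that is large exactly on that set and $0$ elsewhere, so that ``$h\le^* g_e$'' forces the support of $h$ to lie almost inside $G_e$. Before that, the sequence is normalised so that it decreases outright rather than just modulo finite sets.

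First, let $\langle G_e\rangle_{e\in\omega}\in\ts$. Uniformly in $e$ I replace $G_e$ by $G'_e=\bigcap_{k\le e}G_k$. Since $G_{k+1}\subseteq^*G_k$ for all $k$, we have $G'_e=^*G_e$. Each $G'_e$ is computable and infinite, the sequence satisfies $G'_{e+1}\subseteq G'_e$ outright, and $G'_e\smallsetminus G'_{e+1}$ is still infinite. Maximality transfers unchanged because it only depends on sets up to $=^*$. So I may assume $G_{e+1}\subseteq G_e$ for every $e$. Then I define
$$g_e(x)=\begin{cases}x&\text{if }x\in G_e,\\0&\text{otherwise.}\end{cases}$$
This is a Turing functional of the tower. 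Each $g_e$ is computable, and it is unbounded because $G_e$ is infinite.

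Next I check that $\langle g_e\rangle$ is a tower. Since $G_{e+1}\subseteq G_e$, we get $g_{e+1}(x)\le g_e(x)$ for every $x$, so $g_{e+1}\le^* g_e$. For every $x\in G_e\smallsetminus G_{e+1}$ with $x>0$ we have $g_{e+1}(x)=0<x=g_e(x)$. This set is infinite, so $g_{e+1}<^\infty g_e$.

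Finally, for maximality, let $h$ be an unbounded computable function. The set $H=\{x:h(x)>0\}$ is infinite and computable, so maximality of the tower of sets gives some $e$ with $H\smallsetminus G_e$ infinite. For every $x\in H\smallsetminus G_e$ we have $g_e(x)=0<h(x)$, so $g_e<^\infty h$, as required.

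The only step needing care is the initial normalisation. Without it, $\subseteq^*$ gives $g_{e+1}\le^* g_e$ directly, but the finite-intersection replacement is what makes the construction uniform and keeps every column computable. I expect no genuine obstacle beyond that. Since the argument is a single uniform functional, it yields the Medvedev reduction $\tf\le_s\ts$.
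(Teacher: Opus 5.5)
Your proof is correct and is essentially the paper's. It uses the same functional, $g_e(x)=x$ for $x\in G_e$ and $g_e(x)=0$ otherwise, and gets maximality from $H=\{x:h(x)>0\}$ exactly as you do.

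The normalisation to $\bigcap_{k\le e}G_k$ is harmless but unnecessary. The paper works with $G_e$ directly: $G_{e+1}\subseteq^*G_e$ already gives $g_{e+1}\le^*g_e$, and the columns are uniformly computable without the normalisation. So your closing remark that the normalisation is what secures uniformity and computability is not accurate.
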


\begin{proof}
    Let $\langle G_e\rangle_{e\in\omega}\in\ts$.
    Define the tower of functions $\langle g_e\rangle_{e\in\omega}$ by
        $$g_e(x)=\begin{cases}x&\text{if }x\in G_e,\\0&\text{otherwise.}\end{cases}$$

    As each set $G_e$ is infinite, each function $g_e$ is unbounded.
    We have ${g_{e+1}(x)>g_e(x)}$ if and only if $x\in G_{e+1}\smallsetminus G_e$, and so only finitely often.
    Hence ${g_{e+1}(x)\le^*g_e(x)}$.
    Similarly, $g_{e+1}<g_e$ if and only if $x\in G_e\smallsetminus G_{e+1}$ which is infinite, and so $g_{e+1}<^\infty g_e$.
    Hence $\langle g_e\rangle_{e\in\omega}$ is a tower of functions.

    To show maximality, let $h$ be an unbounded computable function. 
    Let ${H=\{x:h(x)>0\}}$.
    As $H$ is computable, we can fix $e$ such that $H\smallsetminus G_e$ is infinite.
    For each $x\in H\smallsetminus G_e$ we have $g_e(x)=0<h(x)$, and so $h$ is not dominated by $g_e$.
    Hence $\langle g_e\rangle_{e\in\omega}\in\tf$.
\end{proof}

Perhaps the most interesting and difficult question regarding the mass problems of sequences of functions defined in this section is whether
there is a Muchnik or Medvedev reduction between $\af$ and $\as$.
It is not clear that there should be any such relation. 
One can attempt to consider reductions that take functions to their graphs or sets to their characteristic functions and
such reductions do indeed preserve the disjointness requirements.
However, not every set is the graph of a function, and not every function is 2-valued or indeed has infinite support, 
and so the results will not be maximal.

In fact, it is true that $\as\le_s\af$, but we do not use the internal structure of the MED family in the reduction:

\begin{theorem}
    \label{t<b}
    $\ts\le_s\bd$.
    That is, there is a Turing functional that from every $\le^*$-unbounded family of functions computes a maximal tower of sets.
\end{theorem}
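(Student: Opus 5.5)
The plan is to follow the template of Proposition \ref{af=b}: diagonalise against every computable set while keeping each individual $G_e$ computable. We may assume $\langle f_k\rangle_{k\in\omega}\in\bd$ is in the form of Remark \ref{niceb}, so the $f_k$ are strictly increasing and $f_0<f_1<\cdots$. We build $G_0=\omega$ and $G_{e+1}\subseteq G_e$ by recursion on $e$, where $e=\langle i,k\rangle$. Stage $e$ attacks the $i$-th partial computable $0/1$-valued function $\varphi_i$, viewed as a candidate characteristic function of an infinite set $H_i$, and uses only the single function $f_k$ as its time bound. Then $G_{e+1}$ is computed from $G_e$ and $f_k$ alone, so by induction each $G_e$ is computable (from $f_0,\dots,f_e$). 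Since we only ever remove elements, $G_{e+1}\subseteq G_e$, which gives $G_{e+1}\subseteq^* G_e$ trivially.

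\textbf{Construction of $G_{e+1}$ from $G_e$.} Enumerate $G_e$ increasingly and process it in consecutive blocks. Let $x$ be the current processing point, an element of $G_e$ beyond every earlier block. Search for the least $y>x'$ with $y\in G_e$ and $\varphi_{i,f_k(x)}(y)\cnv=1$ (the subscript $f_k(x)$ being the step bound), where $x'$ is the next element of $G_e$ after $x$, and where all $z<y$ must also have converged within $f_k(x)$ steps, so that the search is finite and computable.
\begin{itemize}
\item If such a $y$ is found, remove $y$ from $G_{e+1}$.
\item Otherwise remove $x'$ (a default removal).
\end{itemize}
In both cases keep $x$, and start the next block at the next element of $G_e$ after the removed element. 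This has three consequences:
\begin{enumerate}
\item Each block keeps its first point, so $G_{e+1}$ is infinite.
\item Each block removes exactly one element, so $G_e\smallsetminus G_{e+1}$ is infinite.
\item Each $G_{e+1}$ is computable, since the search in each block is bounded by $f_k(x)$.
\end{enumerate}
The whole construction is a single uniform procedure in the oracle $f$, which gives the required Medvedev reduction.

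\textbf{Maximality.} Let $H$ be infinite and computable, and suppose for contradiction that $H\subseteq^* G_e$ for all $e$. Fix $i$ with $\varphi_i=\chi_H$. Let $\psi(x)$ be the number of steps needed for $\varphi_i$ to converge on all $z\le y$, where $y$ is the least element of $H$ above $x+1$. Because $H\subseteq^*G_e$, we are allowed to look for elements of $H$ without insisting that they lie in $G_e$: if $y$ is not in $G_e$ we simply do not remove it and fall back to the default. This makes $\psi$ a computable function independent of the construction, which avoids any circularity. Since the family is $\le^*$-unbounded, we can fix $k$ with $f_k>^\infty\psi'$, where $\psi'(x)=\max_{z\le x}\psi(z)$ is the cumulative maximum. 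At $e=\langle i,k\rangle$, each processing point $x$ with $f_k(x)\ge\psi(x)$ should cause an element of $H$ (up to finitely many exceptions, an element of $H\cap G_e$) to be removed. If infinitely many processing points have this property, then $H\smallsetminus G_{e+1}$ is infinite, which is the contradiction we want.

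\textbf{Main obstacle.} The main difficulty is exactly that last step. Escaping only infinitely often does not guarantee that the escape points are processing points, because blocks may jump past them. The first fix I would try is to make the search bound depend on the current position rather than on the block start. That is, inside a block starting at $x$, keep scanning successive candidates $z$ and allow $f_k(z)$ steps at candidate $z$. Because $\psi'$ is monotone and $f_k$ escapes $\psi'$, an escape point $m$ lying inside a block would then still trigger a success before the block ends. The block must still be finite, so I would cap it at the first point where some candidate succeeds or where the gap exceeds a computable bound obtained from $f_k$. Verifying that the cap cannot swallow all escape points, while still keeping each $G_{e+1}$ computable, is the delicate calculation. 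If this fails, the fallback is to pass to the strictly increasing reorganised sequence from Remark \ref{niceb} and replace $f_k$ by $\max_{j\le k}f_j$ iterated, so that every sufficiently long block contains an escape.
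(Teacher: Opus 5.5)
Your construction of $G_{e+1}$ is sound. It is uniform in $f$, and each $G_{e+1}$ is computable, infinite, and contained in $G_e$ with $G_e\smallsetminus G_{e+1}$ infinite. Your $\psi$ is also adequate: assuming $H\subseteq^*G_e$, every sufficiently large processing point $x$ with $f_k(x)\ge\psi(x)$ removes an element of $H$. Let $y_0$ be the least element of $H$ above $x+1$. If $x'<y_0$, the search finds $y_0$. If $x'=y_0$, then a successful search and the default removal both remove an element of $H$.

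The gap is in maximality, which is the only nontrivial part of the theorem, and you stop exactly where an argument is needed. You choose $k$ with $f_k>^\infty\psi'$, but nothing forces an escape point to be a processing point. An escape at $m$ inside the block of some $x\le m$ says nothing about the budget $f_k(x)$ actually used there. The remedies you sketch (position-dependent budgets with a cap, iterated maxima) are never carried out. So as written the proof is incomplete at its key step.

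The missing observation is that the hypothesis $H\subseteq^*G_e$ also controls the block structure; you already use it to make $\psi$ independent of the construction. Let $N(x)$ be the least element of $H$ greater than $x$, and take a large processing point $x_j$.

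\begin{itemize}
\item If block $j$ removes no element of $H$, it must be a default block with $x_j'\notin H$.
\item Since $N(x_j)\in G_e$, this gives $x_j'<N(x_j)$, and hence $x_{j+1}\le N(x_j)$.
\end{itemize}

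So choose $k$ with $f_k>^\infty\psi'\circ N$, a computable function that does not depend on the construction. Take a large escape point $m$ with $x_j\le m<x_{j+1}$. Either block $j$ removes an element of $H$, or $x_{j+1}\le N(m)$. In the second case, since $f_k$ is strictly increasing,
\[
f_k(x_{j+1})>f_k(m)>\psi'(N(m))\ge\psi(x_{j+1}),
\]
so block $j+1$ removes an element of $H$. This yields infinitely many elements of $H$ outside $G_{e+1}$, which is the contradiction you need. Your original block construction then works unchanged.

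For comparison, the paper avoids diagonalisation altogether. It takes $G_e=\ran(g_e)$ for strictly increasing $g_e\ge f_e$ with nested ranges, each omitting infinitely many points of the previous one. Maximality is then pure counting: if $p$ enumerates $R$ and $g_e(x)>p(2x)$ infinitely often, then $\ran(g_e)$ has at most $x$ elements up to $p(2x)$, while $R$ has $2x+1$. Sparseness alone defeats every computable $R$, with no indices or time bounds.
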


\begin{proof}
    Let $f=\langle f_e\rangle_{e\in\omega}$ be an $\le^*$-unbounded family of functions, with the properties specified in Remark \ref{niceb}.
    That is, each $f_e$ is strictly increasing, and $f$ is linearly ordered by $<$.
    We aim to further alter the family so that the sequence of ranges forms our maximal tower.
    We construct a family $g=\langle g_e\rangle_{e\in\omega}\le_Tf$ such that:
    \begin{enumerate}
        \item[(1)] For each $e$, $f_e\le g_e$, so $g$ is $\le^*$-unbounded,
        \item[(2)] for each $e$, $g_e$ is computable and increasing, so $\ran(g_e)$ is computable,
        \item[(3)] for each pair $e<k$, $\ran(g_k)\subseteq\ran(g_e)$ and $\ran(g_e)\smallsetminus\ran(g_k)$ is infinite.
    \end{enumerate}
    Unboundedness of $g$ will provide maximality for the tower of ranges.

    The idea is to make the ranges of each $g_e$ thinner than the previous, by skipping values that are too small to satisfy the above properties.
    We never run out of space to push the output further within the range of the previous $g_e$, as each is unbounded.

    \vspace*{1em}

    \noindent We define $g$ recursively. 
    Let $g_0=f_0$.
    For each $e$ and $x$, let
        $$N_{e,x}=\max\{f_{e+1}(x),\max_{y<x}g_{e+1}(y)\},$$
    and
        $$g_{e+1}(x)=\min\{y\in\ran(g_e):(\exists z\in\ran(g_e))[N_{e,x}<z<y]\}.$$
    Each function $f_e$ and previously defined function $g_e$ is unbounded, 
    so there are always infinitely many attained values above the maximum of a finite set, and $g$ is well defined.
    For each $e$ and $x$, we have $f_{e+1}(x)\le N_{e,x}<g_{e+1}(x)$, and so (1) holds.
    Similarly, for $y<x$ we have $g_{e+1}(y)<g_{e+1}(x)$ so $g_{e+1}$ is increasing. 
    As $g_{e+1}(x)$ is computed using a finite number of other computable functions, it is computable, and so (2) holds.
    By definition, $g_{e+1}$ always outputs members of the range of $g_e$, so $\ran(g_{e+1})\subseteq\ran(g_e)$.
    We also have that there is always some member of $\ran(g_e)$ strictly between $g_{e+1}(x)$ and $g_{e+1}(x+1)$,
    so infinitely many members of $\ran(g_e)$ are omitted, and (3) holds.
    Hence, $\langle\ran(g_e)\rangle_{e\in\omega}$ is a tower of sets.

    \begin{figure}[h]
        \centering
        \begin{tikzpicture}
            \draw[black,dotted] (5,0) -- (-5,0) node[anchor=east]{$g_e$};
            \draw[black,dotted] (5,-1) -- (-5,-1) node[anchor=east]{$f_{e+1}$};
            \draw[black,dotted] (5,-2) -- (-5,-2) node[anchor=east]{$g_{e+1}$};
            \foreach \x/\y in {-5/0,-4/0,-3/0,-2/0,-1/0,0/0,1/0,2/0,3/0,4/0,5/0,-5/-1,-2.7/-1,1.4/-1,-3/-2,-1/-2,3/-2}
                \filldraw[black] (\x,\y) circle (2pt);
            \foreach \x in {-4,-2,2}
                \draw[black,dotted] (\x,0) -- (\x,-2);
            \foreach \x in {-3,-1,3}
                \draw[black,thick,dotted] (\x,0) -- (\x,-2);
        \end{tikzpicture}
        \caption{Illustration of how the ranges of each $g_e$ are made to form a tower.}
        \label{rangetower}
    \end{figure}

    In Figure \ref{rangetower}, we can see how, with each value included in the range of the next function, 
    an element of the previous range is left out.
    Also observe how $g_{e+1}>f_{e+1}$ so $\le^*$-unboundedness of the sequence is preserved.

    To show maximality, let $R$ be an infinite computable set, and let $p$ be the increasing computable function enumerating $R$.
    By $\le^*$-unboundedness of $g$, fix $e$ such that there are infinitely many $x$ with $p(2x)<g_e(x)$.
    For these $x$ we have:
    \begin{align*}
        |\!\ran(g_e)\cap[0,p(2x)]|&<x,\\
        |\!\ran(p)\cap[0,p(2x)]|&=2x.
    \end{align*}
    As $R=\ran(p)$, for an infinite set of $x$ we thus have that $|R\smallsetminus\ran(g_e)|>x$, 
    and so $R\smallsetminus\ran(g_e)$ is infinite.
    Thus, $\langle\ran(g_e)\rangle_{e\in\omega}\in\ts$.
\end{proof}

\subsection{Separating mass problems}
\label{sepmp}

It can be difficult to show that a mass problem is not reducible to another.
One has to show that there is a member of one problem that does not compute any member of the other.
If we have an example of a non-reduction between two well-studied classes, then it may be useful:

\begin{proposition}[\cite{LMNS:2023}, Thm.\ 3.1]
    \label{lmns3.1}
    $\ts\le_s\nonlow$.
    That is, there is a Turing functional that from every non-low set computes a maximal tower of sets.
    \qed
\end{proposition}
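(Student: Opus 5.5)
The plan is to build the tower uniformly from $Z$ by thinning, in the style of the proof of Theorem \ref{t<b}, and to get maximality by contradiction with $Z'\not\le_T\emptyset'$. Fix the standard $Z$-computable approximation $Z'_s(e)=1$ iff $\Phi^Z_e(e)$ converges within $s$ steps. Let $t_e$ be the $Z$-computable settling function of this approximation restricted to $Z'\restriction(e+1)$: the least $s$ such that $Z'_u\restriction(e+1)=Z'_s\restriction(e+1)$ for all $u\ge s$. Note that $t_e$ is not really computable from $Z$ alone; only its approximations $t_e[s]$ are. Each level $G_{e+1}$ will therefore be defined from $G_e$ using only the stage-$x$ approximation at each input $x$.

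The construction is as follows. Put $G_0=\omega$. Let $G_{e+1}$ consist of those $x\in G_e$ that survive a removal rule, where the rule depends only on $Z'_x\restriction(e+1)$ and on the position of $x$ inside $G_e$. Concretely, delete every other element of $G_e$ above the last stage $\le x$ at which $Z'_\cdot\restriction(e+1)$ changed, and in addition remove a block of $G_e$ of length equal to that stage whenever a change occurs. Since $Z'_s\restriction(e+1)$ changes only finitely often, each $G_{e+1}$ differs by a finite, eventually periodic pattern from a set computable from finitely much information. So each $G_e$ is computable, and the sequence $\langle G_e\rangle$ is uniformly computable from $Z$ via a single Turing functional. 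The tower properties $G_{e+1}\subseteq G_e$ and $G_e\smallsetminus G_{e+1}$ infinite then hold by construction, as in property (3) of the proof of Theorem \ref{t<b}.

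For maximality, suppose $R$ is an infinite computable set with $R\subseteq^*G_e$ for every $e$. The goal is to compute $Z'$ from $\emptyset'$, using $R$ together with a finite amount of nonuniform information; this contradicts the non-lowness of $Z$. The key lemma I would aim for is that the elements of $R$ lying in $G_{e+1}$ bound the settling stage $t_e$. The intended reason is this: the removal of blocks of length equal to a change stage means that an element of $G_{e+1}$ beyond a gap of a given size certifies that no later change of $Z'\restriction(e+1)$ occurred before it. Granting this, $\emptyset'$ proceeds in two steps. It finds the point $n_e$ past which $R\subseteq G_{e+1}$; this is not directly decidable, so I would instead use $\emptyset'$ to check, for the $e$-th element of $R$ above a guessed threshold, whether a later change could still occur. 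It then reads off $Z'(e)=Z'_{s}(e)$ for some $s$ bounded by an element of $R$.

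The main obstacle is exactly this decoding step. I must arrange that membership of computable points of $R$ in the $Z$-computable sets $G_{e+1}$ yields information that $\emptyset'$ can exploit, and the point is that $\emptyset'$ cannot evaluate $G_{e+1}$ itself. My first attempt would be to make the combinatorial shape of $G_{e+1}$ (gap sizes) depend monotonically on $t_e$, so that a $\Sigma_1$ question about $R$ is answerable by $\emptyset'$: ``is there an element of $R$ in an interval that every admissible $G_{e+1}$ with settling time $\le s$ must omit?'' If that fails, I would fall back to the characterisation of non-low sets via functions escaping $\emptyset'$-computable approximations, and replace $t_e$ by such a function in the thinning.
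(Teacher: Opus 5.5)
Your proposal does not prove maximality. The decoding step, which you flag as the main obstacle, is the real content of the theorem, and your construction cannot support it.

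Since $Z'_s\restriction(e+1)$ changes at most $e+1$ times, $G_{e+1}$ depends on the settling stage $t_e$ only through finitely many removed blocks and the phase of the final ``every other element'' pattern. So, up to a finite set, $G_{e+1}$ is one of the two halves of $G_e$: the elements in even, respectively odd, position above the last change. But $R\subseteq^*G_{e+1}$ ignores finite sets. A computable pseudo-intersection is therefore not constrained by the blocks at all. The only thing it must respect is one parity bit per level, and your construction does not tie that bit to $Z'(e)$.

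This is also why your test cannot work. Any question about finitely many elements of $R$, such as whether some element lies beyond a gap, is defeated by the finitely many exceptions that $\subseteq^*$ permits. Questions about infinitely many elements of $R$, or about the threshold $n_e$, are not decidable by $\emptyset'$. And $\emptyset'$ cannot test membership in $G_{e+1}$ at all, since that depends on $Z$. Your fallback via escaping functions is not developed, so as it stands there is no argument that the tower is maximal.

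The missing idea, and the one behind the cited result, is to match the finitely many exceptions allowed by $\subseteq^*$ with the finitely many mind changes allowed by the Limit Lemma. To do this, let each element carry a guess at $Z'$ together with the stage of that guess. Identify $\omega$ with pairs $\langle\sigma,n\rangle$ with $\sigma\in{}^{<\omega}2$, and put $G_e=\{\langle\sigma,n\rangle: n\ge|\sigma|\ge e\text{ and }\sigma\restriction e=Z'_n\restriction e\}$. Then:
\begin{itemize}
\item each $G_e$ is computable, because $n\mapsto Z'_n\restriction e$ is eventually constant;
\item the sequence is uniformly $Z$-computable;
\item $G_{e+1}\subseteq G_e$;
\item $G_e\smallsetminus G_{e+1}$ contains $\langle Z'\restriction e,n\rangle$ for all large $n$.
\end{itemize}
Now let $R=\{\langle\sigma_0,n_0\rangle<\langle\sigma_1,n_1\rangle<\cdots\}$ be infinite, computable, and satisfy $R\subseteq^*G_e$ for all $e$. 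From $R\subseteq^*G_0$ you get $n_k\to\infty$, since only finitely many pairs have $|\sigma|\le n\le N$. From $R\subseteq^*G_{i+1}$ you get $\sigma_k(i)=Z'_{n_k}(i)=Z'(i)$ for almost all $k$. Hence $Z'(i)=\lim_k\sigma_k(i)$ (setting $\sigma_k(i)=0$ when undefined), so $Z'\le_T\emptyset'$, contradicting non-lowness. No threshold has to be located, and $\emptyset'$ never needs to evaluate any $G_e$.
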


We know that every $\le^*$-unbounded sequence of functions computes a hyperimmune function, 
so to show that $\ts\ngeq_w\bd$ it suffices to show there is a non-low hyperimmune-free set.

\begin{remark}
    \label{hifnl}
    $\hypimm\nleq_w\nonlow$.
    That is, there exists a non-low set of hyperimmune-free degree.
\end{remark}

\begin{proof}
    Every noncomputable low set has hyperimmune degree, so any noncomputable hyperimmune-free set must be non-low.
    Thus, applying the basis theorem for hyperimmune-free sets (e.g.\ \cite[Thm.\ 1.8.42]{Nies:2008}) 
    to any $\Pi^0_1$-class without computable members yields a non-low hyperimmune-free set.
\end{proof}

\begin{corollary}
    \label{septsbd}
    There is a maximal tower of sets which does not compute any $\le^*$-unbounded sequence of functions.
    Thus, $\ts\ngeq_w\bd$ and hence $\ts<_s\bd$.
\end{corollary}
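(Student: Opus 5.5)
The corollary follows by combining Remark \ref{hifnl}, Proposition \ref{lmns3.1} and Remark \ref{hypimm<bd}.

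\textbf{Step 1: a tower with no unbounded sequence below it.} By Remark \ref{hifnl}, fix a non-low set $Z$ of hyperimmune-free degree. By Proposition \ref{lmns3.1}, there is a Turing functional $\Gamma$ with $\Gamma^Z\in\ts$. Set $G=\Gamma^Z$, so $G\le_T Z$. Suppose, for contradiction, that $G$ computes some $f\in\bd$. By Remark \ref{hypimm<bd}, $f$ computes a hyperimmune function $x\mapsto\max_{e<x}f_e(x)$. By transitivity of $\le_T$, $Z$ would compute a hyperimmune function, contradicting that $Z$ has hyperimmune-free degree. Hence $G$ is a maximal tower of sets computing no $\le^*$-unbounded sequence. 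This is the first claim.

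\textbf{Step 2: the relations between mass problems.} The tower $G$ is a member of $\ts$ that computes no member of $\bd$. By the definition of Muchnik reducibility, this means $\bd\nleq_w\ts$, i.e.\ $\ts\ngeq_w\bd$. Since a Medvedev reduction yields a Muchnik one, also $\bd\nleq_s\ts$. Combined with $\ts\le_s\bd$ from Theorem \ref{t<b}, we obtain $\ts<_s\bd$.

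\textbf{Main obstacle.} The only nontrivial ingredient is the existence of a non-low hyperimmune-free set, which is already supplied by Remark \ref{hifnl}. The rest of the argument is routine bookkeeping.
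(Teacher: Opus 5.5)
Your proposal is correct and matches the paper's proof: a non-low hyperimmune-free set computes a maximal tower by Proposition \ref{lmns3.1}, and any $\le^*$-unbounded sequence computable from that tower would yield, via Remark \ref{hypimm<bd}, a hyperimmune function below a hyperimmune-free degree. You also state explicitly the appeal to Theorem \ref{t<b} for $\ts\le_s\bd$, which the paper leaves implicit when it concludes $\ts<_s\bd$.
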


\begin{proof}
    Take a hyperimmune-free non-low set, as in Proposition \ref{hifnl}.
    The maximal tower of sets $G$ it computes by Proposition \ref{lmns3.1} cannot compute any hyperimmune function, 
    but every $\le^*$-unbounded family of functions computes a hyperimmune function by Remark \ref{hypimm<bd}.
    Hence, $G$ does not compute any $\le^*$-unbounded family of functions.
\end{proof}

While we know that no member of mass problems defined as in Sections \ref{analoguesaftf} and \ref{seqsets} is computable, 
we would also like to know if there is some such class with members in every noncomputable degree.
This is not the case.
The following definition gives us a lower bound for the complexity of any such mass problem.

\begin{definition}[\cite{LMNS:2023}, Def.\ 3.3]
    An oracle $Z$ is called \emph{index guessable} if $\emptyset'$ computes from $k$ an index for $\Phi^Z_k$ whenever $\Phi^Z_k$ is computable.
    That is, there is a Turing functional $\Gamma$ such that
        $$\Phi^Z_k\text{ is computable}\implies\Phi^Z_k=\varphi_{\Gamma^{\emptyset'}\!(k)}.$$
    The functional $\Gamma$ does not need to be total.
\end{definition}

Of course, we need to verify that this notion is nontrivial for it to be useful.

\begin{proposition}[\cite{LMNS:2023}, Prop.\ 2.4]
    \label{lmns2.4}
    Every $\Delta^0_2$ 1-generic set is index guessable.
    In particular, there is a noncomputable index guessable set.
    \qed
\end{proposition}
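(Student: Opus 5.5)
The plan is to show that, for a $\Delta^0_2$ 1-generic $Z$, $\emptyset'$ can search for a finite initial segment of $Z$ above which $\Phi_k$ has no ``splitting''. Such a segment yields, uniformly, an index for a partial computable function that agrees with $\Phi^Z_k$.

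Fix a $\Delta^0_2$ 1-generic set $Z$. Call strings $\tau,\tau'$ a \emph{$k$-splitting} if there is $x$ with $\Phi^\tau_k(x)\cnv\neq\Phi^{\tau'}_k(x)\cnv$. Given a string $\sigma$, let $\psi_{k,\sigma}$ be the partial computable function that, on input $x$, searches for some $\tau\supseteq\sigma$ with $\Phi^\tau_k(x)\cnv$ and outputs the value it finds first. An index for $\psi_{k,\sigma}$ is obtained uniformly from $(k,\sigma)$ by the $s$-$m$-$n$ theorem.

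The key step uses 1-genericity to show the following. If $\Phi^Z_k$ is total and computable, say equal to $f$, then some $\sigma\subset Z$ has no pair of extensions forming a $k$-splitting. Consider the set
$$S=\{\tau:(\exists x)[\Phi^\tau_k(x)\cnv\neq f(x)]\}.$$
Since $f$ is computable, $S$ is c.e. Since $\Phi^Z_k=f$, the set $Z$ does not meet $S$. By 1-genericity, some $\sigma\subset Z$ has no extension in $S$. Every $\tau\supseteq\sigma$ is therefore consistent with $f$, so no two extensions of $\sigma$ can split.

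Conversely, suppose $\sigma\subset Z$ has no $k$-splitting extensions. Then $\psi_{k,\sigma}$ agrees with $\Phi^Z_k$ wherever the latter converges, because $\Phi^Z_k(x)$ is computed by some initial segment of $Z$ extending $\sigma$. Also $\psi_{k,\sigma}(x)$ converges whenever $\Phi^Z_k(x)$ does. So if $\Phi^Z_k$ is total, then $\Phi^Z_k=\psi_{k,\sigma}$.

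Now define $\Gamma^{\emptyset'}(k)$ as follows. Search for the least $n$ such that $Z\restriction n$ has no $k$-splitting extensions, and output the index of $\psi_{k,Z\restriction n}$. This is computable in $\emptyset'$ for two reasons. First, $Z\le_T\emptyset'$ because $Z$ is $\Delta^0_2$. Second, ``there exist $\tau,\tau'\supseteq\sigma$ forming a $k$-splitting'' is a $\Sigma^0_1$ question, which $\emptyset'$ decides.

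Suppose $\Phi^Z_k$ is computable; we may assume it is total, as in the intended reading of the definition. Then the key step guarantees the search halts, and the previous paragraph shows the output is a correct index. If $\Phi^Z_k$ is not computable, the search may diverge, which the definition allows.

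For the ``in particular'', a noncomputable $\Delta^0_2$ 1-generic set exists by the standard finite-extension construction below $\emptyset'$. At stage $e$, $\emptyset'$ asks whether the current string has an extension in $W_e$, and extends into $W_e$ if so.

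I expect the main obstacle to be the key step, namely extracting the non-splitting initial segment from 1-genericity via the c.e. set $S$. One also has to take care that totality of $\Phi^Z_k$ is what makes $\psi_{k,\sigma}$ equal to $\Phi^Z_k$, rather than merely compatible with it.
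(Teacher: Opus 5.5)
Your proof is correct. Applying 1-genericity to the c.e.\ set of strings that disagree with the total computable function $\Phi^Z_k$ yields an initial segment of $Z$ with no $k$-splitting extensions. Since $Z\le_T\emptyset'$ and ``no $k$-splitting above $\sigma$'' is a $\Pi^0_1$ condition, $\emptyset'$ can locate such a segment and output an index for $\psi_{k,\sigma}$, which equals $\Phi^Z_k$ because $\Phi^Z_k$ is total.

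The paper gives no proof of its own here; it quotes the result from \cite{LMNS:2023} without proof, and your splitting argument is the standard proof of this fact. For the ``in particular'', you only need to add the routine observation that no 1-generic set is computable.
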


It was shown in \cite[Prop.\ 2.4]{LMNS:2023} that no index guessable set computes an element of $\ts$.
This is the strongest result they need, as $\ts$ is the Muchnik-least mass problem they consider.
We adapt the argument to be more general, so as to suit any analogues of cardinal characteristics in this framework:

\begin{proposition}
    \label{iggeneral}
    Let $P$ be a property of sequences of computable functions such that if $f=\langle f_e\rangle_{e\in\omega}$ has property $P$, then:
    \begin{enumerate}
        \item $f$ is noncomputable,
        \item if $\hat f=\langle \hat f_e\rangle_{e\in\omega}$ and $\hat f_e=^*f_e$ for each $e$, then $\hat f$ has property $P$.
    \end{enumerate}
    Then no index guessable oracle computes a sequence with property $P$.
\end{proposition}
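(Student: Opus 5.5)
Suppose $Z$ is index guessable via $\Gamma$ and $\Phi^Z_k=f$ codes a sequence $\langle f_e\rangle_{e\in\omega}$ with property $P$. The plan is to use $\emptyset'$-guessing to produce a computable sequence $\hat f$ with $\hat f_e=^*f_e$ for every $e$. By condition (2), $\hat f$ has property $P$, and by condition (1) it is then noncomputable, a contradiction.

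First, each column $f_e=\Phi^Z_{k_e}$ is computable, where $k_e$ is an index, obtained uniformly from $k$ and $e$, for the functional $X\mapsto(\Phi^X_k)^{[e]}$. So $\Gamma^{\emptyset'}(k_e)\cnv$ and $f_e=\varphi_{\Gamma^{\emptyset'}(k_e)}$. By the limit lemma, $e\mapsto\Gamma^{\emptyset'}(k_e)$ is a total $\Delta^0_2$ function, so it has a computable approximation $i(e,s)$ with $\lim_s i(e,s)=i_e$ and $\varphi_{i_e}=f_e$ total.

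Second, define $\hat f_e(x)$ computably using these approximations. The key point is that $\hat f_e$ must be total on every input, even though early guesses may be indices of partial functions, so it cannot simply be $\varphi_{i(e,x)}(x)$. I would set
$$\hat f_e(x)=\varphi_{i(e,s)}(x)\quad\text{for the least } s\ge x \text{ such that } \varphi_{i(e,s),s}(x)\cnv .$$
For totality, the guesses eventually stabilise at $i_e$, and $\varphi_{i_e}(x)$ converges at some stage $t$, so some $s\ge\max(x,t,\text{stabilisation stage})$ qualifies; the search therefore terminates. For correctness, once $x$ exceeds the stabilisation stage $s_e$ of $i(e,\cdot)$, every $s\ge x$ has $i(e,s)=i_e$, so $\hat f_e(x)=\varphi_{i_e}(x)=f_e(x)$. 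Hence $\hat f_e=^*f_e$. The sequence $\hat f$ is computable uniformly in $e$ and $x$, so the coded function $\hat f$ is computable.

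The main obstacle is this second step: handling guesses that point to partial or wrong functions without losing totality or uniformity. The device of searching only over stages $s\ge x$ resolves it, because for large $x$ it forces use of the final guess only. A lesser subtlety is the uniformity in the first step: the map $e\mapsto k_e$ needs to be computable by the s-m-n theorem, and $\Gamma^{\emptyset'}(k_e)$ must converge for every $e$. Both hold since each column is computable. The conclusion then follows from (1) and (2).
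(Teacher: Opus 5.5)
Your proof is correct and is essentially the paper's argument. You obtain indices for the columns uniformly from $k$, use index guessability and the limit lemma to get a computable approximation $i(e,s)$ to an index for $f_e$, and set $\hat f_e(x)$ to be the value at the first stage $s\ge x$ where $\varphi_{i(e,s),s}(x)$ converges, which makes $\hat f$ computable with $\hat f_e=^*f_e$. You also check explicitly that this search always terminates and that $e\mapsto\Gamma^{\emptyset'}(k_e)$ is total, details the paper leaves implicit.
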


\begin{proof}
    Let $Z$ be index guessable.
    With a view to contradition, suppose that $f=\langle f_e\rangle_{e\in\omega}$ has property $P$, and $f\le_TZ$ via $\Phi_k$.
    Using the index $k$, we can find a computable function $g$ such that $\Phi^Z_{g(e)}=f_e$.
    Thus, there is a Turing functional $\Gamma$ such that $f_e=\Phi^Z_{g(e)}=\varphi_{\Gamma^{\emptyset'}\!(g(e))}$ 
    and thus by the Limit Lemma there is a computable function $h$ such that $f_e=\varphi_{\lim_sh(e,s)}$.
    Define $\hat f=\langle\hat f_e\rangle_{e\in\omega}$ as follows:
    Fix $e$ and $x$.
    Let $s>x$ be least such that $\varphi_{h(e,s),s}(x)\!\!\downarrow$.
    Define $\hat f_e(x)=\varphi_{h(e,s),s}(x)$.
    As for a fixed $e$, there are only finitely many $s$ where $\varphi_{h(e,s)}\neq f_e$, we have that $\hat f_e=^*f_e$.
    Thus, $\hat f$ would have property $P$ but be computable.
\end{proof}

This also yields the promised separation between $\hypimm$ and $\bd$:

\begin{corollary}
    \label{stricthi<b}
    There exists a hyperimmune function which computes no $\le^*$-unbounded family of functions.
    Thus, $\hypimm\ngeq_w\bd$ and hence $\hypimm<_s\bd$.
\end{corollary}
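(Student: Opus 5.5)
The approach is to take a noncomputable index guessable set and show that it computes a hyperimmune function but no member of $\bd$. By Proposition \ref{lmns2.4}, there is a noncomputable $\Delta^0_2$ 1-generic set $Z$, and $Z$ is index guessable. Every noncomputable $\Delta^0_2$ set has hyperimmune degree; one can cite Miller and Martin's theorem that every nonzero degree below $\emptyset'$ is hyperimmune, or argue directly for 1-generics. So $Z$ computes a hyperimmune function $p$. Since $p\le_T Z$, and index guessability passes downward (if $\Phi^p_k$ is computable, then it equals $\Phi^Z_{k'}$ for an index $k'$ computable from $k$, and we apply $\Gamma$ to $k'$), $p$ is itself index guessable. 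It therefore suffices to show that no index guessable oracle computes a member of $\bd$, and for this we use Proposition \ref{iggeneral} with $P$ the property of being $\le^*$-unbounded.

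We verify the two hypotheses of Proposition \ref{iggeneral} in turn. For the second, suppose $\hat f_e=^*f_e$ for each $e$. Any computable $h$ dominating every $\hat f_e$ also dominates every $f_e$, because finite modifications preserve $\le^*$. Hence $\le^*$-unboundedness of $f$ transfers to $\hat f$. For the first, suppose some $f\in\bd$ were computable. Then $h(x)=\max_{e\le x}f_e(x)$ is computable. For each $e$ and all $x\ge e$ we have $f_e(x)\le h(x)$, so $h$ dominates every $f_e$, a contradiction. This is just the diagonal argument of Remark \ref{hypimm<bd}, applied to a computable sequence.

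Putting these together, $p$ is hyperimmune and computes no $\le^*$-unbounded family. This gives $\hypimm\ngeq_w\bd$. Combined with Remark \ref{hypimm<bd}, we get the strict relation $\hypimm<_s\bd$.

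The main point needing care is that the hyperimmune witness must be index guessable. Rather than passing to $p$, it is simplest to take the witness to be the degree of $Z$ itself: in a mass-problem statement a set $Z$ of hyperimmune degree serves in place of the function $p$, and $Z$ computes no member of $\bd$ directly from Proposition \ref{iggeneral}. The only other nontrivial input is that $Z$ has hyperimmune degree, which is the standard fact quoted above.
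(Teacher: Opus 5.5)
Your proof is correct and follows the paper's route: a noncomputable $\Delta^0_2$ 1-generic is index guessable and has hyperimmune degree, and Proposition~\ref{iggeneral} then shows it computes no member of $\bd$; you also explicitly check the proposition's two hypotheses for $\le^*$-unboundedness, which the paper leaves implicit. Showing that index guessability passes down to $p$ is harmless but unnecessary, since any member of $\bd$ computed by $p\le_T Z$ would already be computed by $Z$.
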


\begin{proof}
    We saw in Remark \ref{hypimm<bd} that $\hypimm\le_s\bd$.

    By Proposition \ref{lmns2.4} there is a noncomputable $\Delta^0_2$ index guessable set.
    Every noncomputable $\Delta^0_2$ degree is hyperimmune, 
    and by Proposition \ref{iggeneral} it must not compute a $\le^*$-unbounded sequence of functions.
\end{proof}

\subsection{Descriptive complexity of MED families} 

In both the set theory and computability settings, the question of the descriptive complexity of these families proves interesting.
As we know, there are $\Delta^0_2$ members for each of our mass problem analogues of cardinal characteristics, 
the immediate question is whether they can be c.e.\ or co-c.e.
Maximal towers of sets must not be c.e., as per \cite[Fact 2.3]{LMNS:2023}.
The argument requires no alteration to show that no maximal weak tower of sets can be c.e.\ either.
We provide the argument they outline:

\begin{remark}
    If $G=\langle G_e\rangle_{e\in\omega}$ is a maximal weak tower of sets, then $G$ is not c.e.
\end{remark}

\begin{proof}
    Let $x_0$ be the first element enumerated into $G_0$.
    For each $n$, let $x_{n+1}$ be the first element enumerated into $\bigcap_{i\le n+1}G_i$ which is greater than $x_n$.
    Then, $X=\{x_i:i\in\omega\}\subseteq^*G_i$ for each $i$ and $X$ is enumerated in increasing order and is hence computable.
\end{proof}

We can give a similar argument for towers of functions, 
showing that no maximal tower of functions can be approximated from below in the same manner as c.e.\ sets.
We hence define c.e.\ functions as follows:

\begin{definition}
    \label{cefunc}
    A function $f$ is called \emph{computably enumerable (c.e.)}\ if there is a uniformly computable sequence $\langle f_n\rangle_{n\in\omega}$
    such that $f_0\equiv0$, $f=\lim_nf_n$, and for every $x$ there is at most one $n$ such that $f_n(x)\neq f_{n+1}(x)$.
\end{definition}

\begin{proposition}
    If $g=\langle g_e\rangle_{e\in\omega}$ is a maximal tower of functions, then $g$ is not c.e.
\end{proposition}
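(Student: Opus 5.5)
The plan is to copy the earlier proposition showing that every $g\in\tf$ computes an unbounded $p$ with $p\le^*g_e$ for all $e$. The difference is that we run the construction on the c.e.\ approximation of $g$ instead of on $g$ itself, so that $p$ comes out computable. A computable $p$ of this kind contradicts maximality. Indeed, maximality gives some $e$ with $g_e<^\infty p$, meaning $g_e(x)<p(x)$ for infinitely many $x$, whereas $p\le^*g_e$ says this happens only finitely often.

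Suppose towards a contradiction that $g$ is c.e., witnessed by a uniformly computable sequence $\langle g^s\rangle_{s\in\omega}$ as in Definition \ref{cefunc}. The key observation is the following. Each value starts at $0$ and changes at most once. So if at some stage $s$ we see $g^s(\langle x,e\rangle)\neq 0$, then this value is already final: $g_e(x)=g^s(\langle x,e\rangle)$. Nonzero values of $g$ are therefore certified the moment they appear. Zero values are never certified, but the construction will only ever use nonzero values.

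We define a sequence $x_0<x_1<\cdots$ and a function $p$ recursively and computably. Let $x_0=0$ and $p(x_0)=0$. Given $x_n$, search through pairs $(x,s)$ with $x>x_n$ for the first pair satisfying two conditions: $g^s(\langle x,k\rangle)\neq 0$ for every $k\le n+1$, and $\min_{k\le n+1}g^s(\langle x,k\rangle)>p(x_n)$. Put $x_{n+1}=x$ and $p(x_{n+1})=\min_{k\le n+1}g_{k}(x)$; by the observation above, these values are the true ones. Set $p(y)=0$ for every $y$ not of the form $x_n$.

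The main step is to check that each search terminates. Since $g_{k+1}\le^* g_k$ for all $k$, we have $g_{n+1}\le^* g_k$ for every $k\le n$. Hence for all large $x$ the minimum $\min_{k\le n+1}g_k(x)$ equals $g_{n+1}(x)$. Because $g_{n+1}$ is unbounded, there are infinitely many $x>x_n$ at which this minimum exceeds $p(x_n)\ge 0$. At any such $x$ all the relevant values are nonzero, so they eventually appear in the approximation, and the search finds a pair.

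Given termination, $p$ is computable: to evaluate $p(y)$, run the construction until some $x_n\ge y$. Also $p$ is unbounded, since its values along $\langle x_n\rangle$ strictly increase. Finally, fix $e$. For every $n\ge e$ we have $p(x_n)\le g_e(x_n)$ by the choice of the minimum, and off the sequence $p$ is $0$. So $p\le^*g_e$ for every $e$, which gives the contradiction described at the start.

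The only delicate point is the certification of values: we must never rely on a zero value of the approximation, because a zero may later change. The construction avoids this by requiring that all the relevant values be nonzero before it uses them.
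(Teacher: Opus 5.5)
Your proof is correct and follows the same route as the paper. Both use the fact that nonzero values of the c.e.\ approximation are already final to run the earlier lower-bound construction effectively, which yields a computable unbounded $p$ with $p\le^* g_e$ for every $e$ and so contradicts maximality. Your version is in fact the more careful one: the paper only searches for $x>x_n$ with $g_{n+1,s}(x)>n+1$ and sets $p(x_{n+1})=g_{n+1}(x_{n+1})$, which does not guarantee $p(x_n)\le g_e(x_n)$ for $e<n$, since the threshold in $g_n\le^* g_e$ depends on $n$; your requirement that every $g_k(x)$ with $k\le n+1$ be certified nonzero, together with taking their minimum, supplies exactly that missing comparison.
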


\begin{proof}
    Suppose that $g$ is c.e.
    Note that the map $\langle e,x,s\rangle\mapsto g_{e,s}(x)$ is computable and total.
    Define uniformly computable sequences $\langle s_n\rangle_{n\in\omega}$ and $\langle x_n\rangle_{n\in\omega}$ as follows:
    Let $\langle s_0,x_0\rangle=\mu\langle s,x\rangle[g_{e,s}(x)>0]$.
    For each $n$, define:
        $$\langle s_{n+1},x_{n+1}\rangle=\mu\langle s,x\rangle[g_{n+1,s}(x)>n+1\text{ and }x>x_n].$$
    The sequence of $x_n$ is increasing, so the sequence $\langle x_n\rangle_{n\in\omega}$ is computable.
    Thus, the function
        $$p(x)=\begin{cases}g_{n,s_n}(x)&\text{if }x=x_n,\\0&\text{if }x\notin\{x_n\}_{n\in\omega}\end{cases}$$
    is computable but dominated by every $g_e$.
\end{proof}

The Muchnik equivalence between $\as$ and $\ts$ maps co-c.e.\ sets to c.e.\ ones and vice versa, hence there cannot be any co-c.e.\ MAD family.
In \cite{LMNS:2023}, it is shown that there is a c.e.\ MAD family, and in fact every noncomputable c.e.\ degree computes one:

\begin{theorem}[\cite{LMNS:2023}, Thm.\ 5.1]
    \label{lmns5.1}
    For each noncomputable c.e.\ set $A$, there is a c.e. MAD family $G\le_TA$.
    \qed
\end{theorem}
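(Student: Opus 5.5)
The plan is a priority construction with permitting. We enumerate a uniformly c.e. sequence $G=\langle G_i\rangle_{i\in\omega}$, where column $G_i$ is devoted to the requirement
$$R_i:\ \text{if }\varphi_i\text{ is total, }\{0,1\}\text{-valued, and }H_i=\{x:\varphi_i(x)=1\}\text{ is infinite, then }G_j\cap H_i\text{ is infinite for some }j\le i.$$
We also add one extra column, for example a fixed infinite computable set placed as $G_0$ with the indexing shifted, so that the family is not trivially finite. Throughout the construction we keep four invariants.
\begin{enumerate}
\item[(a)] Each $G_i$ is enumerated in increasing order, so that each column is computable (or finite).
\item[(b)] Any element enumerated at stage $s$ is larger than $s$.
\item[(c)] Distinct columns are pairwise disjoint. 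This is stronger than almost disjointness.
\item[(d)] An element $x$ enters $G$ at stage $s$ only if $A_s\restriction x\neq A_{s-1}\restriction x$.
\end{enumerate}
Invariant (d) gives $G\le_T A$: to decide whether $x\in G$, find a stage $t$ with $A_t\restriction x=A\restriction x$; then $x\in G$ iff $x\in G_t$.

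At stage $s$, each $R_i$ with $i<s$ that is not yet seen to be satisfied acts as follows. It looks, using the convergent part of $\varphi_{i,s}$, for some $x\in H_i$ with
$$x>\max(s,\ \max G_{i,s},\ r_{i,s}),\qquad x\notin\bigcup_j G_{j,s},\qquad A_s\restriction x\neq A_{s-1}\restriction x.$$
Here $r_{i,s}$ is a restraint imposed by higher-priority requirements. If such an $x$ exists, the requirement enumerates the least one into $G_i$. To prevent lower-priority columns $G_k$ with $k>i$ from swallowing $H_i$, each $R_i$ additionally places a restraint on lower-priority requirements, in the style of a Sacks/Friedberg finite-injury argument. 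Concretely, $R_k$ for $k>i$ may not enumerate elements of $H_i$ lying between consecutive ``candidate'' points that $R_i$ is currently waiting to be permitted on. Invariants (a)–(c) are immediate from the construction, and maximality for finite or non-total $H_i$ is vacuous.

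The main obstacle is verifying that $R_i$ is met. Suppose $H_i$ is infinite and $H_i\cap G_j$ is finite for all $j<i$; we want $G_i\cap H_i$ infinite. The argument is the usual permitting one. If $R_i$ eventually stops enumerating, then after some stage it has a computable supply of candidates: fresh elements of $H_i$, not taken by higher-priority columns (since those meet $H_i$ only finitely), and protected from lower-priority columns by the restraint. None of these candidates is ever permitted, so $A\restriction x=A_{s_x}\restriction x$, where $s_x$ is the stage at which $x$ appears as a candidate. That makes $A$ computable, a contradiction.

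The delicate point is designing the restraint so that two things hold. First, lower-priority requirements are injured only finitely often, so each $R_k$ still eventually acts on its own candidates and each column stays computable. Second, the candidates of $R_i$ form a computable infinite sequence, which the permitting argument needs. I would first try letting $R_i$ reserve, at each stage, the single least element of $H_i$ above $\max(s,\max G_i)$ that is not in any column. Lower-priority columns must avoid reserved elements, and the reservation is updated only when $R_i$ acts. This keeps the set of reserved elements computable and makes each lower requirement injured at most once per action of $R_i$. I would then check by induction on $i$ that every $R_i$ acts only finitely often or is met, which yields maximality and completes the proof.
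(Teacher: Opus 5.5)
There is a genuine gap, and it sits exactly at the step you call delicate. Because you insist on pairwise disjoint columns, a candidate for $R_i$ must lie outside \emph{every} column, so $R_i$ competes with all lower-priority requirements for the elements of $H_i$. Restraints or reservations set by $R_i$ can only refer to numbers that $R_i$ already knows to be in $H_i$; a visible, permitted candidate would be taken by $R_i$ first anyway. The real danger is different. A lower-priority $R_k$, for example one with $H_k=H_i$ but a much faster $\varphi_k$, can enumerate elements of $H_i$ into $G_k$ before $\varphi_i$ has converged on them. Nothing in your scheme prevents this.

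In fact $R_i$ as you formulated it (``for some $j\le i$'') can fail. By the recursion theorem, take $i$ such that $\varphi_i$ simulates the construction and declares $x\in H_i$ only after $x$ has already entered some column $G_k$ with $k>i$. For instance, let $H_i$ consist exactly of the numbers that become new maxima of $\bigcup_{k>i}G_k$; there are infinitely many, since requirements for fast indices of $\omega$ keep acting. Then $H_i$ is an infinite computable set which, by disjointness, misses $G_0,\dots,G_i$ entirely. So the ``computable supply of protected candidates'' that your permitting argument needs does not exist.

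Separately, invariant (b) is incompatible with permitting. A candidate $x$ stops being eligible once $s\ge x$, so a late change of $A$ below $x$ does not permit it. Under the usual convention $\varphi_{i,s}(x)\downarrow\Rightarrow x<s$, nothing is ever eligible at all. Drop (b); (d) alone gives $G\le_T A$.

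The missing idea is to give up disjointness and make each requirement finitary and blind to lower priority. This is the standard route, and it is how the paper proves its MED analogue right after Theorem~\ref{lmns5.1}. Use requirements $P_{e,k}$: if $H_e\smallsetminus\bigcup_{j<\langle e,k\rangle}G_j$ is infinite, then $|G_e\cap H_e|>k$. At stage $s$, take the least $n=\langle e,k\rangle<s$ with $|G_{e,s}|\le k$ for which some $x$ satisfies
\begin{itemize}
\item $\varphi_{e,s}(x)\downarrow=1$,
\item $x\notin G_{j,s}$ for all $j<n$,
\item $x>\max G_{e,s}$,
\item $A_s\restriction x\neq A_{s-1}\restriction x$.
\end{itemize}
Enumerate the least such $x$ into $G_e$; nothing else happens at that stage.

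Almost disjointness then comes from priority. If $x\in G_e\cap G_m$ entered $G_m$ first, the requirement $P_{e,k}$ that later added it to $G_e$ had $\langle e,k\rangle\le m$, hence $k\le m$. Such requirements only ever add the first $m+1$ elements of $G_e$, so $G_e\cap G_m$ is finite.

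Since lower-priority columns are never avoided, they cannot block anything. Let $P_n$ be the least failing requirement, and let $s_0$ be a stage after which no $P_{n'}$ with $n'\le n$ acts. Every $x$ in the infinite computable set $\bigl(H_e\smallsetminus\bigcup_{j<n}G_j\bigr)\smallsetminus[0,\max G_e]$ is a candidate for $P_n$ at every stage after both $s_0$ and the convergence of $\varphi_e(x)$. So $A\restriction x$ is already correct at that stage, and $A$ would be computable, a contradiction.

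Maximality is then immediate. Either every $P_{e,k}$ has a true hypothesis and $G_e\cap H_e$ is infinite, or $H_e\subseteq^*\bigcup_{j<n}G_j$ for some $n$, in which case some $G_j$ meets $H_e$ infinitely.
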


In particular, there is a low MAD family, and so we get the strict inequality $\as<_s\nonlow$.
This also shows that there is a co-c.e.\ maximal tower.

The notion of c.e.\ functions in Definition \ref{cefunc} does not make sense to use for an MED family of functions
as each pair of members can only have common value $0$ in finitely many places.
In its place, we can consider a uniformly computable family of partial computable functions that each extend to a total computable function.
To get an analogous result to Theorem \ref{lmns5.1}, we identify partial functions with their graphs.

Say a family of partial computable functions $\langle\vartheta_e\rangle_{e\in\omega}$ is MED if $\vartheta_e\cap\vartheta_k$ is finite for all $e,k$
and for any computable function $h$ there is $e$ such that $h=^\infty\vartheta_e$.
We obtain the following as an analogue of Theorem \ref{lmns5.1}.

\begin{theorem}
    For each noncomputable c.e.\ set $A$ there is a MED family of partial computable functions ${\vartheta=\langle\vartheta_e\rangle_{e\in\omega}}$ 
    such that each $\vartheta_e$ extends to a total computable function, and the graph of $\vartheta$ is c.e.\ and below $A$.
\end{theorem}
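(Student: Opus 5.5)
We adapt the permitting construction behind Theorem \ref{lmns5.1} to graphs of partial functions. Fix a computable enumeration $\langle A_s\rangle_{s\in\omega}$ of $A$. We enumerate, uniformly in $s$, finite partial functions $\vartheta_{e,s}$ whose graphs grow with $s$, and set $\vartheta_e=\bigcup_s\vartheta_{e,s}$, so that the graph of $\vartheta$ is c.e. There are two kinds of members. The first is a computable \emph{base} part: infinitely many columns $\vartheta_{\langle i,0\rangle}$ given by pairwise eventually different total computable functions, for instance $x\mapsto\langle i,x\rangle$ restricted to $x\ge i$. These columns are disjoint, so they serve as fallback targets. The second kind consists of \emph{requirement} columns $\vartheta_{\langle e,1\rangle}$, one for each $e$, attempting
\[R_e:\ \varphi_e\text{ total}\implies(\exists k)\,[\varphi_e=^\infty\vartheta_k].\]

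To meet $R_e$ we proceed as follows. At stage $s$, whenever $\varphi_{e,s}(x)\cnv$ for a fresh large $x$, we may add the point $(x,\varphi_e(x))$ to $\vartheta_{\langle e,1\rangle}$ provided it keeps the graph almost disjoint from all columns of higher priority. Here the role of $A$ enters: we only add such a point if $A$ \emph{permits}, i.e.\ some number $\le x$ enters $A$ at stage $s$. A point is added only if it differs from the finitely many higher-priority columns' values at $x$. Lower-priority columns avoid the values already committed by higher ones at each $x$. Since each column sees only finitely many higher-priority columns, almost disjointness follows exactly as in the proof of Proposition \ref{af=b}.

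We then establish the three remaining properties in turn.

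\emph{Each $\vartheta_e$ extends to a total computable function.} The base columns are total. For a requirement column, the points added are values of the single function $\varphi_e$ on a set of arguments. If $\varphi_e$ is total, the column is a subfunction of $\varphi_e$. If $\varphi_e$ is partial, the column is still a subfunction of $\varphi_e$ with finite or co-finite-determined domain; but $\varphi_e$ may not extend totally. To handle this, we only act for $R_e$ on $x$ after checking $\varphi_{e,s}(y)\cnv$ for all $y\le x$, as in the proof of Proposition \ref{domfcn=d}. Then the domain is an initial-segment-bounded set, and if $\varphi_e$ is partial the column is finite, hence extendable. If $\varphi_e$ is total, $\varphi_e$ itself is the extension.

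\emph{Maximality.} Suppose $\varphi_e$ is total and no column $\vartheta_k$, $k\ne\langle e,1\rangle$, satisfies $\vartheta_k=^\infty\varphi_e$. Then for cofinitely many $x$, the value $\varphi_e(x)$ avoids the finitely many higher-priority values, so only permitting can block us. If the permitting failed cofinitely often, we would compute $A$. Given $x$, we search for a stage $s$ with $\varphi_{e,s}(y)\cnv$ for all $y\le x$ at which $R_e$ wants to act on $x$. Then $A\restriction x=A_s\restriction x$, contradicting noncomputability. This is the standard c.e.\ permitting argument, and it yields infinitely many agreements.

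\emph{Reduction to $A$.} To show the graph is $\le_TA$, we decide whether $(x,y)$ is in $\vartheta_k$ as follows. Points for a requirement column are added at $x$ only when some number $\le x$ enters $A$. Hence, once we find a stage $t$ with $A_t\restriction(x+1)=A\restriction(x+1)$, which $A$ can compute, no further points with argument $x$ are ever added. Membership is then decided at stage $t$. Base columns are computable outright.

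We expect the main obstacle to be the interaction of permitting with the priority-based avoidance. A requirement waiting for permission at $x$ must not be injured infinitely, and the finitely many higher-priority columns may keep taking values at new $x$. We plan to handle this with a finite-injury count as in Theorem \ref{lmns5.1}: each column yields only to finitely many columns, each of which takes at most one value per argument.
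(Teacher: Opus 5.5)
The gap is in eventual difference. You let a column avoid only the values already committed at $x$ by columns of \emph{higher} priority, and then appeal to Proposition~\ref{af=b}. That argument is synchronous: every $g_n(x)$ with $n<\langle e,k\rangle$ is already determined when $g_{e,k}(x)$ is defined. Here points are enumerated in whatever order convergence and permissions arrive, so a higher-priority column may commit at $x$ after a lower-priority one, and it never looks at the latter.

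This breaks your construction concretely. The base columns $\vartheta_{\langle i,0\rangle}$ are fixed in advance and never yield, and $\vartheta_{\langle e,1\rangle}$ never checks base columns of lower priority. By the recursion theorem, take $e$ with $\varphi_e(x)=\langle i,x\rangle$ for some $i$ with $\langle i,0\rangle>\langle e,1\rangle$. Higher-priority base columns never meet $\varphi_e$. If some higher-priority requirement column met $\varphi_e$ infinitely often, it would meet $\vartheta_{\langle i,0\rangle}$ infinitely often. Otherwise your own permitting argument, which only refers to higher-priority columns, makes $R_e$ act infinitely often, and then $\vartheta_{\langle e,1\rangle}\cap\vartheta_{\langle i,0\rangle}$ is infinite. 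Either way the family is not ED.

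Dropping the base columns does not rescue the argument. Suppose $e<e'$ and $\varphi_e=\varphi_{e'}$, but $\varphi_{e'}$ converges faster. Then $R_{e'}$ may put $(x,\varphi_{e'}(x))$ into its column first, and $R_e$ may later add the same point. Since $R_e$ is infinitary and never consults lower-priority columns, nothing in your construction prevents this at infinitely many $x$. ``Each column yields only to finitely many columns'' controls only coincidences created by the lower-priority column, not those created by the higher one.

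The missing idea, which the paper uses, is to make the requirements finitary and conditional and to let the set of avoided columns grow. Use one column $\vartheta_e\subseteq\varphi_e$ for each $e$, no base columns, and requirements
\[P_{e,k}:\ \text{if } \varphi_e\smallsetminus\bigcup_{i<\langle e,k\rangle}\vartheta_i \text{ is infinite, then } |\vartheta_e\cap\varphi_e|>k.\]
An action of $P_{e,k}$ adds a permitted point of $\varphi_e$ that avoids the current values of \emph{all} $\vartheta_i$ with $i<\langle e,k\rangle$. For large $k$ this includes any fixed $m>e$.

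Now take $e<m$. Later additions to $\vartheta_m$ always avoid $\vartheta_e$. Later additions to $\vartheta_e$ can hit $\vartheta_m$ only through actions of the finitely many requirements $P_{e,k}$ with $\langle e,k\rangle\le m$, and each of these acts only finitely often. So $\vartheta_e\cap\vartheta_m$ is finite.

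The conditional hypothesis is what makes this growing avoidance compatible with maximality. If $P_{e,k}$ holds vacuously, then some $\vartheta_i$ with $i<\langle e,k\rangle$ already meets $\varphi_e$ infinitely often.

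Your treatment of extendability (acting on $x$ only once $\varphi_{e,s}(y)\cnv$ for all $y\le x$) and of the reduction of the graph to $A$ is fine.
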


\begin{proof}
    Let $\langle\varphi_e\rangle_{e\in\omega}$ be a uniform enumeration of partial computable functions built up in stages such that 
    if $\varphi_e(x)$ converges at stage $s$ then it does not change value at any stage after $s$.
    This reflects c.e.\ sets having computable approximations from below.

    Recall that we identify functions with their graphs.
    We consider the following requirements:
        $$P_{e,k}\!\!:\text{If }\varphi_e\smallsetminus\bigcup_{i<\langle e,k\rangle}\vartheta_i\text{ is infinite, then }
            |\vartheta_e\cap\varphi_e|>k.$$ 
    We say that $P_{e,k}$ is \emph{permanently satisfied} at stage $s$ if $|\vartheta_{e,s}\cap\varphi_{e,s}|>k$.

    \vspace*{1em}

    \noindent\emph{Construction.}

    \emph{Stage $s>0$:}
    See if there is $n:=\langle e,k\rangle<s$ such that $P_{e,k}$ is not permanently satisfied and there is $x$ such that
    \begin{enumerate}
        \item[1)] $\varphi_{e,s}(x)\!\downarrow$ and $(\forall i<\langle e,k\rangle)[\varphi_{e,s}(x)\neq\vartheta_{i,s-1}(x)]$,
        \item[2)] if $\vartheta_{e,s-1}(y)\!\downarrow$ then $y<x$,
        \item[3)] $A_s\!\!\restriction\!x\neq A_{s-1}\!\!\restriction\!x.$
    \end{enumerate}
    If found, choose $n$ least and then $x$ least that meet the above conditions (1)--(3).
    Define $\vartheta_{e,s}(x)=\varphi_{e,s}(x)$.

    \vspace*{1em}

    \noindent\emph{Verification.}\nopagebreak

    Each $\vartheta_e$ has its graph enumerated in increasing order of inputs, and hence can be extended to a total computable function.
    The graph of the universal function $\vartheta$ is enumerated as permitted by $A$, and so $\vartheta\le_TA$.

    \begin{claim}
        Each requirement $P_{e,k}$ is satisfied.
    \end{claim}

    Suppose there are $e,k$ such that $P_{e,k}$ is not satisfied by the construction.
    We achieve a contradiction by a permitting argument.
    Choose $\langle e,k\rangle$ to be the least example of $P_{e,k}$ failing, suppose at stage $s$ that $|\vartheta_{e,s}\cap\varphi_{e,s}|=k$.
    Its assumption holds, so $\varphi_e\smallsetminus\bigcup_{i<\langle e,k\rangle}\vartheta_i$ is infinite, 
    and so there are infinitely many $x$ that will eventually satisfy (1).
    All but finitely many of those $x$ will also always satisfy (2) as $\vartheta_e$ settles by stage $s$.
    Thus, there is a c.e.\ sequence $x_1<x_2<\cdots$ of elements which simultaneously satisfy (1) and (2) at all stages above $s$,
    and a corresponding c.e.\ sequence $s_1<s_2<\cdots$ of stages such that for all $i$, we have $\varphi_{e,s_i}(x_i)\!\downarrow$, 
    and no element $y\le x_i$ enters $A$ at a stage $t\ge s_i$.
    Otherwise all conditions \mbox{(1)--(3)} would have been met by $x_i$ simultaneously for sufficiently large stages 
    and $\vartheta_e(x_i)$ would converge past stage $s$.
    Define $B_e(y)=A_{s_i}(y)$ for all $y\le x_i$; $B_e$ is computable.
    As each $A\!\restriction\!x_i$ settles by stage $s_i$, we have that $B_e=A$, contradicting the noncomputability of $A$.
    Hence $P_{e,k}$ indeed was satisfied.

    \begin{claim}
        $\vartheta$ is MED.
    \end{claim}

    Let $e<m$, we claim that $|\vartheta_e\cap\vartheta_m|\le m$.
    Suppose $\vartheta_e(x)=\vartheta_m(x)$.
    If $\vartheta_e(x)$ converges first at stage $s$ due to the action of $P_n$, 
    then $\vartheta_{m,s}(x)\!\downarrow$ as $e<\langle m,k\rangle$ for every $k$ 
    so (1) would forbid $\vartheta_m(x)$ from being altered after stage $s$.
    Thus, also by (1), $n\le m$ as otherwise $P_n$ cannot have acted to define $\vartheta_{e,s}(x)$.
    Each $P_n$ only acts to define at most a single value of $\vartheta_e$, so there are at most $m$ many such $x$ with $\vartheta_e(x)=\vartheta_m(x)$.
    Hence, $\vartheta$ is ED.

    For each $e$ such that $\varphi_e$ is total, either $\vartheta_e\cap\varphi_e$ is infinite as the hypothesis of each $P_{e,k}$ is met, 
    or there is $k$ such that the hypothesis for $P_{e,k}$ is not met.
    If so, then there is $i<\langle e,k\rangle$ such that $\vartheta_i\cap\varphi_e$ is infinite.
    Hence, $\vartheta$ is MED.
\end{proof}

    \newpage
    \section{Mass Problems and Cicho\'n's Diagram}
\label{mpcichon}

The abstraction to Weihrauch problems under various names has been an important part of creating a framework for working with cardinal characteristics.
Greenberg, Kuyper, and Turetsky \cite{GKT:2019} show that this framework serves equally well for the analogues studied in computability, 
with an effectivity condition placed on reductions between problems.
The main focus of their work is on reconstructing the classical results about Cicho\'n's diagram from set theory 
and the results by Brendle et al.~\cite{BBNN:2015} in computability, using the framework of Weihrauch problems as an abstraction of both.

To consider effective analogues of much of Cicho\'n's diagram, we need codes for meagre and null sets.
It is sufficient to consider codes for $\mathbf\Sigma^0_2$ meagre sets and $\mathbf\Pi^0_2$ null sets.
Recall that we identify sequences of sets or functions with a single object via Definition \ref{seqcode}, 
and trees can be identified with sets by identifying strings with natural numbers.
We can code a closed nowhere dense set $C$ by a nowhere dense tree $T$ where $C=[T]$.
Thus, a code for a meagre set $M$ is a sequence $\langle T_n\rangle_{n\in\omega}$ of nowhere dense trees such that $M=\bigcup_n[T_n]$.
Similarly, we code an open set $U$ by a set whose complement is a code for a closed set.
A code for a null set $N$ is a pair of sequences $\langle U_n\rangle_{n\in\omega}$ of coded open sets and $\langle x_n\rangle_{n\in\omega}$
such that $N=\bigcap_nU_n$, $x_n$ is the measure $\lambda(U_n)$, and $x_n\le2^{-n}$.

Again, for the following definitions from \cite{GKT:2019} $\mathcal I$ is a non-principal ideal on an infinite set $X$.

\begin{definition}
    Define $\opn{Capture}(\mathcal I)$ as the problem of finding an element of $\mathcal I$ containing a given element of $X$.
    That is, $\opn{Capture}(\mathcal I)=\langle X,\mathcal I,\in\rangle$.
\end{definition}

\begin{definition}
    The problem $\opn{Pass}(\mathcal I)$ is the dual of $\opn{Capture}(\mathcal I)$,
    the problem of finding an element of $X$ outside of a given element of $\mathcal I$.
    That is, $\opn{Pass}(\mathcal I)=\langle\mathcal I,X,\notni\rangle$.
\end{definition}

\begin{definition}
    Define $\opn{Supset}(\mathcal I)$ as the problem of finding an element of $\mathcal I$ containing a given other element of $\mathcal I$.
    That is, $\opn{Supset}(\mathcal I)=\langle\mathcal I,\mathcal I,\subseteq\rangle$.
\end{definition}

\begin{definition}
    The problem $\opn{Spill}(\mathcal I)$ is the dual of $\opn{Supset}(\mathcal I)$,
    the problem of finding an element of $\mathcal I$ which is not contained in a given element of $X$.
    That is, $\opn{Spill}(\mathcal I)=\langle\mathcal I,\mathcal I,\not\supseteq\rangle$.
\end{definition}

We will primarily focus on the ideals $\mathcal M$ of $\mathbf\Sigma^0_2$ meagre subsets of $\wtow$ 
and $\mathcal N$ of $\mathbf\Pi^0_2$ null subsets of $\ttow$.

The aim of this section is to survey and make use of the results in \cite{GKT:2019}
to construct an analogue of Cicho\'n's diagram in computability with the setting of mass problems.
The constructions of morphisms are all from their work; we present them with more detail for clarity and verify any additional properties we need.

\subsection{Computable morphisms}

We build on the framework of \cite{GKT:2019} and associate a mass problem to each Weihrauch problem.
Many results analogous to those for cardinal characteristics or nonlowness classes can be obtained in this way.

\begin{definition}
    Let $A=\langle A_\inst,A_\sol,A\rangle$ be a Weihrauch problem.
    Recall that $A_\inst^r$ and $A_\sol^r$ are the sets of computable instances and solutions, respectively.
    Define $$\massp(A)
    =\{\langle f_e\rangle_{e\in\omega}:\forall e[f_e\in A_\sol^r]\text{ and }(\forall a\in A_\inst^r)\exists e[aAf_e]\}.$$
    This is the collection of complete solution sets, interpreted as sequences coded by a single object,
    restricting to computable instances and solutions.
    Let $\massp^Z(A)$ be the relativisation to $Z$.
\end{definition}

Recall the Weihrauch problems corresponding to $\mathfrak d$ and $\mathfrak b$:
$\opn{Dom}=\langle\wtow,\wtow,\le^*\rangle$ and $\opn{Esc}=\langle\wtow,\wtow,<^\infty\rangle$.
We have that $\massp(\opn{Dom})=\mathcal D$ and $\massp(\opn{Esc})=\mathcal B$.

To make use of this framework to obtain results for the setting of mass problems, 
we observe that whenever a morphism is computable, we obtain the corresponding Medvedev reduction between the associated mass problems:

\begin{proposition}
    \label{morphismmedvedev}
    If $A\to B$ via a morphism $\varphi_\inst,\varphi_\sol$ such that there are (not necessarily total) Turing functionals
    $\Phi_\inst$ and $\Phi_\sol$ such that $\varphi_\inst(a)=\Phi_\inst^a$ and 
    $\varphi_\sol(b)=\Phi_\sol^b$ for every $a\in A_\inst^r$ and $b\in B_\sol^r$, then $\massp(A)\le_s\massp(B)$.
    We also get that $\massp(B^\bot)\le_s\massp(A^\bot)$.
\end{proposition}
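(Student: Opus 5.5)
The plan is to build the reduction directly from $\Phi_\sol$ and use $\Phi_\inst$ only in the verification, with no computation needed on the instance side. Let $g=\langle g_e\rangle_{e\in\omega}\in\massp(B)$, so each $g_e$ is a computable member of $B_\sol$. Define the Turing functional $\Psi$ by $\Psi^g=\langle f_e\rangle_{e\in\omega}$ with $f_e=\Phi_\sol^{g^{[e]}}$. This is uniform: from $g$ and $e$ we compute the column $g^{[e]}$ and run $\Phi_\sol$ on it. Since $g_e\in B_\sol^r$, the hypothesis gives $\Phi_\sol^{g_e}=\varphi_\sol(g_e)$. This is a total member of $A_\sol$, and it is computable because it is computed by a fixed functional from a computable oracle. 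So every column of $\Psi^g$ lies in $A_\sol^r$, and $\Psi^g$ is total.

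For completeness, fix $a\in A_\inst^r$. By the hypothesis on $\Phi_\inst$, $b:=\varphi_\inst(a)=\Phi_\inst^a$ is a computable element of $B_\inst$, that is, $b\in B_\inst^r$. Since $g\in\massp(B)$ there is some $e$ with $b\,B\,g_e$. The morphism property then gives $a\,A\,\varphi_\sol(g_e)=a\,A\,f_e$. Hence $\Psi^g\in\massp(A)$, and so $\massp(A)\le_s\massp(B)$.

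For the dual statement, note that $\langle\varphi_\sol,\varphi_\inst\rangle$ is a morphism $B^\bot\to A^\bot$. The instances of $B^\bot$ are $B_\sol$ and its solutions are $B_\inst$, and $\varphi_\sol\colon B_\sol\to A_\sol=A^\bot_\inst$ and $\varphi_\inst\colon A_\inst=A^\bot_\sol\to B_\inst=B^\bot_\sol$ have the right types. Suppose $\varphi_\sol(b)\,A^\bot\,a$, i.e.\ $\lnot\, a\,A\,\varphi_\sol(b)$. By the contrapositive of the morphism condition we get $\lnot\,\varphi_\inst(a)\,B\,b$, i.e.\ $b\,B^\bot\,\varphi_\inst(a)$. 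The roles of $\Phi_\inst$ and $\Phi_\sol$ swap, and the computability hypotheses still hold on the relevant computable elements: $\Phi_\sol$ works on $B_\sol^r=(B^\bot)_\inst^r$ and $\Phi_\inst$ works on $A_\inst^r=(A^\bot)_\sol^r$. Applying the first part then gives $\massp(B^\bot)\le_s\massp(A^\bot)$.

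The only delicate point is that $\Phi_\sol$ and $\Phi_\inst$ need not be total. This is harmless because we only ever apply them to computable oracles in the appropriate domains, where they agree with the total maps $\varphi_\sol$ and $\varphi_\inst$. The uniformity required for a Medvedev reduction comes from the fact that $\Psi$ is one fixed functional, independent of $g$.
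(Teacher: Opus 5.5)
Your proposal is correct and follows essentially the same route as the paper: apply $\Phi_\sol$ column by column to a member of $\massp(B)$, and use $\Phi_\inst$ only to see that $\varphi_\inst(a)$ is a computable instance of $B$. The paper writes $B_\sol^r$ at that point, a typo for $B_\inst^r$, which you have right. The paper also does not prove the dual claim, and your observation that $\langle\varphi_\sol,\varphi_\inst\rangle$ is a morphism $B^\bot\to A^\bot$ satisfying the same computability hypotheses is the natural way to supply it.
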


\begin{proof}
    Let $\Gamma$ be the Turing functional where if $b=\langle b_e\rangle_{e\in\omega}\in\massp(B)$ then 
        $$\Gamma^b(e,x)=\Phi^{b_e}_\sol(x)=\varphi_\sol(b_e)(x),$$
    and let 
        $$\langle f_e\rangle_{e\in\omega}=\Gamma^b=\langle x\mapsto\Gamma^b(e,x)\rangle_{e\in\omega}.$$
    Now for any $a\in A_\inst^r$, $\Phi_\inst^a\in B_\sol^r$ so there is $e$ with $\Phi_\inst^aBb_e$ and so $aA\Phi_\sol^{b_e}$, that is, $aAf_e$.
    Hence $\langle f_e\rangle_{e\in\omega}\in\massp(A)$.
\end{proof}

The additional requirement on the computability of the morphism is necessary.
Without it, we cannot guarantee that $\langle f_e\rangle\le_T\langle b_e\rangle$ so there may not be a Muchnik/Medvedev reduction.
Almost all of the morphisms demonstrated in \cite{GKT:2019} are in fact computable, 
we will discuss those which are not in Section \ref{meagrenull}.

The following easy example is instructive:

\begin{remark}
    The morphism $\opn{Esc}\to\opn{Dom}$ is computable.
    Hence, $\mathcal B\le_s\mathcal D$.
\end{remark}

\begin{proof}
    Let $\Phi_\inst$ be the identity.
    Let $\Phi_\sol$ map a function $f$ to $f+1$.
    If $h$ is dominated by $f$, then $f+1$ is infinitely often greater than $h$.
    
    Thus, each $\le^*$-dominating sequence $\langle f_e\rangle_{e\in\omega}$ computes a $\le^*$-unbounded sequence $\langle f_e+1\rangle_{e\in\omega}$.
\end{proof}

This case is also interesting in that it is self-dual.
There is no general reduction between a Weihrauch problem and its dual.

\subsection{Computably coded meagre sets}
\label{meagrenull}

We define mass problem analogues of the cardinals associated with the meagre ideal using their respective Weihrauch problems.

\begin{definition}[$\opn{Add}(\mathcal M)$]
    Define $\opn{Add}(\mathcal M)=\massp(\opn{Spill}(\mathcal M))$.
    That is, the mass problem $\opn{Add}(\mathcal M)$ is the class of sequences $\langle M_e\rangle_{e\in\omega}$ of computable meagre sets
    such that for any computable meagre set $\tilde M$ there is $e$ such that $M_e\nsubseteq\tilde M$.
\end{definition}

\begin{definition}[$\opn{Cov}(\mathcal M)$]
    Define $\opn{Cov}(\mathcal M)=\massp(\opn{Capture}(\mathcal M))$.
    That is, the mass problem $\opn{Cov}(\mathcal M)$ is the class of sequences $\langle M_e\rangle_{e\in\omega}$ of computable meagre sets
    such that for every computable function $f$ there is $e$ such that $f\in M_e$.
\end{definition}

\begin{definition}[$\opn{Non}(\mathcal M)$]
    Define $\opn{Non}(\mathcal M)=\massp(\opn{Pass}(\mathcal M))$.
    That is, the mass problem $\opn{Non}(\mathcal M)$ is the class of sequences $\langle f_e\rangle_{e\in\omega}$ of computable functions
    such that for any computable meagre set $\tilde M$ there is $e$ such that $f_e\notin\tilde M$.
\end{definition}

\begin{definition}[$\opn{Cof}(\mathcal M)$]
    Define $\opn{Cof}(\mathcal M)=\massp(\opn{Supset}(\mathcal M))$.
    That is, the mass problem $\opn{Cof}(\mathcal M)$ is the class of sequences $\langle M_e\rangle_{e\in\omega}$ of computable meagre sets
    such that for any computable meagre set $\tilde M$ there is $e$ such that $\tilde M\subseteq M_e$.
\end{definition}

The morphisms defined in \cite{GKT:2019} between these Weihrauch problems are all computable.
We present them below with additional detail and highlight this fact.
In each case, we note that Proposition \ref{morphismmedvedev} gives the corresponding and dual Medvedev reductions.

\begin{proposition}
    The morphism $\opn{Pass}(\mathcal M)\to\opn{Supset}(\mathcal M)$ is computable.
    Hence, $\opn{Non}(\mathcal M)\le_s\opn{Cof}(\mathcal M)$ and $\opn{Add}(\mathcal M)\le_s\opn{Cov}(\mathcal M)$.
\end{proposition}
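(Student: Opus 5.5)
We need a morphism $\opn{Pass}(\mathcal M)\to\opn{Supset}(\mathcal M)$, with both maps given by Turing functionals. Here $\opn{Pass}(\mathcal M)=\langle\mathcal M,\wtow,\notni\rangle$ and $\opn{Supset}(\mathcal M)=\langle\mathcal M,\mathcal M,\subseteq\rangle$. So we need $\varphi_\inst:\mathcal M\to\mathcal M$ and $\varphi_\sol:\mathcal M\to\wtow$ such that whenever $M\in\mathcal M$, $M'\in\mathcal M$ and $\varphi_\inst(M)\subseteq M'$, we have $\varphi_\sol(M')\notin M$.

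The plan is as follows. Take $\varphi_\inst$ to be the identity on codes; it is trivially computable. For $\varphi_\sol$ we must, given a code $\langle T_n\rangle_{n\in\omega}$ of nowhere dense trees for a meagre set $M'$, compute a function $x\in\wtow$ avoiding $M'$. This suffices: if $M\subseteq M'$ and $x\notin M'$, then $x\notin M$.

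The construction of $x$ is the standard effective Baire category argument, carried out relative to the code. Build strings $\sigma_0\subseteq\sigma_1\subseteq\cdots$ with $\sigma_0=\emptyset$. At stage $n$, search for an extension $\tau\supseteq\sigma_n$ with $\tau\notin T_n$, for instance the least such $\tau$ in a fixed enumeration of $\omega^{<\omega}$, and let $\sigma_{n+1}$ be $\tau$ followed by one more entry so that lengths grow. Such a $\tau$ exists because $T_n$ is nowhere dense, and the search is computable in the code because membership in $T_n$ is decided by the oracle. Set $x=\bigcup_n\sigma_n$. Then $x$ is total, and $x\notin[T_n]$ for every $n$, since $x$ extends a string outside the tree $T_n$. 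Hence $x\notin M'$.

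For computability, $\varphi_\sol$ is given by a single Turing functional $\Phi_\sol$ that works uniformly on any code. It is total on every genuine code, and in particular on every computable one, so the hypothesis of Proposition \ref{morphismmedvedev} is met. One point needs care. If trees are coded as subsets of $\omega^{<\omega}$ rather than assumed closed downward and pruned, "$\tau\notin T_n$" still implies $[\tau]\cap[T_n]=\emptyset$, provided $T_n$ is a tree in the sense of being closed under initial segments. I would state this explicitly.

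Proposition \ref{morphismmedvedev} then yields $\massp(\opn{Pass}(\mathcal M))\le_s\massp(\opn{Supset}(\mathcal M))$, that is, $\opn{Non}(\mathcal M)\le_s\opn{Cof}(\mathcal M)$. The dual clause gives $\massp(\opn{Supset}(\mathcal M)^\bot)\le_s\massp(\opn{Pass}(\mathcal M)^\bot)$. Next I would check that $\opn{Supset}(\mathcal M)^\bot=\opn{Spill}(\mathcal M)$ and $\opn{Pass}(\mathcal M)^\bot=\opn{Capture}(\mathcal M)$ as Weihrauch problems, since dualising swaps the coordinates and negates the relation. This gives $\opn{Add}(\mathcal M)\le_s\opn{Cov}(\mathcal M)$.

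The only real obstacle is confirming that the solution map is computable uniformly in the code, with no appeal to non-effective choices. The first-fit search handles this.
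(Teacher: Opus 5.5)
Your proposal is correct and matches the paper's proof. Both take the identity on instances, and both use a uniform finite-extension construction: from a code $\langle T_n\rangle_{n\in\omega}$, pick successive extensions outside each $T_n$ to obtain a point not in the coded meagre set, then apply Proposition \ref{morphismmedvedev}. Your explicit checks that the duals are $\opn{Capture}(\mathcal M)$ and $\opn{Spill}(\mathcal M)$, and that the search succeeds because each $T_n$ is closed under initial segments and nowhere dense, fill in details the paper leaves implicit.
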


\begin{proof}
    Let $\Phi_\inst$ be the identity.
    Let $\Phi_\sol$ map a code for a meagre set $M$ to a function $f\notin M$:
    If $M=\langle T_n\rangle_{n\in\omega}$ is a code for a meagre set we let ${\sigma_0=\min({}^{<\omega}\omega\smallsetminus T_0)}$
    and $\sigma_{n+1}=\min([\sigma_n]^\prec\smallsetminus T_{n+1})$ for $n\ge0$.
    Then, $\Phi_\sol^M=\bigcup_n\sigma_n\notin M$.
    This is well defined as long as each tree $T_n$ is nowhere dense.

    If $M$ and $B$ are meagre sets, and $\Phi_\inst^M\subseteq B$, then $\Phi_\sol^B\notin B$ and hence is not \mbox{in $M$}.
\end{proof}

The morphisms that relate these to the problem $\opn{Dom}$ are also computable, giving us the picture regarding Baire space:

\begin{proposition}
    \label{CapMtoDom}
    The morphism $\opn{Capture}(\mathcal M)\to\opn{Dom}$ is computable.
    Hence, $\opn{Cov}(\mathcal M)\le_s\mathcal D$ and $\mathcal B\le_s\opn{Non}(\mathcal M)$.
\end{proposition}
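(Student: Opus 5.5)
We need a computable morphism from $\opn{Capture}(\mathcal M)=\langle\wtow,\mathcal M,\in\rangle$ to $\opn{Dom}=\langle\wtow,\wtow,\le^*\rangle$. This means a computable $\Phi_\inst:\wtow\to\wtow$ and a computable $\Phi_\sol$ taking a function $g$ to a code for a meagre set, such that $\Phi_\inst(f)\le^* g$ implies $f\in\Phi_\sol(g)$. Proposition \ref{morphismmedvedev} then gives $\massp(\opn{Capture}(\mathcal M))\le_s\massp(\opn{Dom})$, i.e.\ $\opn{Cov}(\mathcal M)\le_s\mathcal D$. The dual reduction reads $\massp(\opn{Dom}^\bot)\le_s\massp(\opn{Capture}(\mathcal M)^\bot)$. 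Here $\opn{Dom}^\bot=\langle\wtow,\wtow,\not\ge^*\rangle$ is $\opn{Esc}$ up to the trivial identification $g\not\ge^* h\iff h<^\infty g$ (instances and solutions swapped), and $\opn{Capture}(\mathcal M)^\bot=\opn{Pass}(\mathcal M)$. So we get $\mathcal B\le_s\opn{Non}(\mathcal M)$.

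\textbf{The morphism.} Following \cite{GKT:2019}, take $\Phi_\inst$ to be the identity on $\wtow$. For $g\in\wtow$, let $\Phi_\sol(g)$ be the code $\langle T_n\rangle_{n\in\omega}$ with
$$T_n=\{\sigma\in{}^{<\omega}\omega:(\forall x\in[n,|\sigma|))[\sigma(x)\le g(x)]\}.$$
Each $T_n$ is a tree computable uniformly from $g$ and $n$, and $[T_n]=\{f:(\forall x\ge n)f(x)\le g(x)\}$. Hence $\bigcup_n[T_n]=\{f:f\le^*g\}$, so $f\le^*g$ implies $f\in\Phi_\sol(g)$, which is exactly the morphism condition.

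\textbf{Main check: nowhere density.} The only real thing to verify is that each $T_n$ is nowhere dense, so that $\Phi_\sol(g)$ is a legitimate meagre code. Given any $\tau$, extend it to length at least $n$ and then append the value $g(|\tau'|)+1$ at the next coordinate, where $\tau'$ is the extended string. No further extension of the result lies in $T_n$, so $[T_n]$ is nowhere dense.

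\textbf{Computability.} Both maps are given by total Turing functionals: the identity, and $g\mapsto\langle T_n\rangle_n$ as a set of (codes of) strings, which is uniformly computable from $g$. On computable inputs they therefore produce computable instances and computable codes, as Proposition \ref{morphismmedvedev} requires. I expect no real obstacle; the most care is needed in matching the dual problems with $\opn{Esc}$ and $\opn{Pass}(\mathcal M)$, and in confirming that the strict-versus-nonstrict inequality conventions ($<^\infty$ versus $\not\ge^*$) agree. If a convention differed, composing with the computable shift $g\mapsto g+1$ fixes it, as in the remark on $\opn{Esc}\to\opn{Dom}$.
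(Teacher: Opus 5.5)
Your proposal is correct and takes the paper's approach: the identity on instances, and the same trees $T_n=\{\sigma:(\forall k)[n\le k<|\sigma|\rightarrow\sigma(k)\le g(k)]\}$ coding $\{f:f\le^*g\}$; your nowhere-density check and your identification of the duals fill in details the paper leaves implicit. One small correction: $g\not\ge^* h$ means $g<^\infty h$ (not $h<^\infty g$), so $\opn{Dom}^\bot$ is literally $\opn{Esc}$ with no swapping of instances and solutions, which matters because swapping would in general change the associated mass problem.
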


\begin{proof}
    Let $\Phi_\inst$ be the identity.
    Let $\Phi_\sol$ map a function $g$ to the meagre set of functions $f\le^*g$.
    We obtain the code for such as a union of trees $T_n$, ${\Phi_\sol^g=\langle T_n\rangle_{n\in\omega}}$, where 
        $$T_n=\{\sigma:\forall k[n\le k<|\sigma|\rightarrow\sigma(k)\le g(k)]\}.$$ 
    If $f$ and $g$ are computable functions, and $\Phi_\inst^f=f\le^* g$, then $f\in\Phi_\sol^g$.
\end{proof}

\begin{proposition}
    The morphism $\opn{Dom}\to\opn{Supset}(\mathcal M)$ is computable.
    Hence, $\mathcal D\le_s\opn{Cof}(\mathcal M)$ and $\opn{Add}(\mathcal M)\le_s\mathcal B$.
\end{proposition}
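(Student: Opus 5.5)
Following the pattern of Propositions \ref{CapMtoDom} and the earlier ones, I will exhibit the morphism $\opn{Dom}\to\opn{Supset}(\mathcal M)$ from \cite{GKT:2019} via explicit Turing functionals $\Phi_\inst$ and $\Phi_\sol$. Then I will check that whenever $\Phi_\inst^g\subseteq M$ for a meagre code $M$, we have $g\le^*\Phi_\sol^M$. Proposition \ref{morphismmedvedev} then yields $\mathcal D=\massp(\opn{Dom})\le_s\massp(\opn{Supset}(\mathcal M))=\opn{Cof}(\mathcal M)$. For the dual, note $\opn{Dom}^\bot=\langle\wtow,\wtow,\not\ge^*\rangle=\opn{Esc}$ (since $\lnot(h\le^* f)$ iff $f<^\infty h$) and $\opn{Supset}(\mathcal M)^\bot=\opn{Spill}(\mathcal M)$. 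This gives $\opn{Add}(\mathcal M)\le_s\mathcal B$.

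For $\Phi_\inst$, I take a function $g$ to the code for the meagre set $\{f: f\le^* g\}$, exactly as in the proof of Proposition \ref{CapMtoDom}. That is, $\langle T_n\rangle_{n\in\omega}$ with $T_n=\{\sigma:\forall k[n\le k<|\sigma|\to\sigma(k)\le g(k)]\}$. Each $T_n$ is nowhere dense and computable uniformly from $g$. Should the paper prefer to use a code for a slightly larger set (e.g.\ $f\le^* g+1$), nothing changes.

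For $\Phi_\sol$, given a code $M=\langle T_n\rangle_{n\in\omega}$ of nowhere dense trees, I want a function $h\le_T M$ such that every $g$ with $\{f:f\le^*g\}\subseteq M$ satisfies $g\le^* h$. My plan is to let $h(k)$ be the maximum, over all strings $\sigma$ of length at most $k$ with entries bounded by $k$ and all $n\le k$, of the least value $v$ such that some extension $\tau\succeq\sigma$ of length $k+1$ has $\tau(k)\le v$ and $\tau\notin T_n$. To keep this computable from $M$, I will instead search for the least finite extension leaving $T_n$ and record its maximal entry. Nowhere density guarantees these searches terminate, so $h$ is total and computable from $M$, uniformly.

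The main obstacle is the verification. Suppose $g\not\le^* h$, i.e.\ $g(k)>h(k)$ for infinitely many $k$. I then build $f\le^* g$ avoiding every $[T_n]$, contradicting $\{f:f\le^*g\}\subseteq M$. I construct $f$ by finite extensions. At the $n$-th step, I pick a large $k$ with $g(k)>h(k)$, pad the current string with zeros up to length $k$, and extend it out of $T_n$ using the escaping extension that $h$ measured. For the values of this extension to be bounded by $g$ on the relevant coordinates, the definition of $h$ must control entire escaping extensions (including entries past $k$), not just a single coordinate. I expect to need to adjust $h$ so that $h(k)$ bounds all entries of escaping extensions of the relevant strings, and to use a block of consecutive coordinates after an escape point. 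Alternatively, I would take the standard argument in \cite{GKT:2019} showing any meagre set is contained in one of the form $\{f:\forall^\infty k\ f\restriction I_k\neq x\restriction I_k\}$ and read off computability. In either case the key point to highlight is that all searches are effective in the code, so the morphism is computable.
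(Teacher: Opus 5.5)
There is a genuine gap, at exactly the step you flagged, and tuning $h$ alone cannot close it. With $\Phi_\inst^g$ coding $\{f:f\le^*g\}$ for an arbitrary $g$, no choice of $\Phi_\sol$ makes the morphism condition true.

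Here is a counterexample. Let $T_n=\{\sigma:\forall i\,[(n\le i<|\sigma|\wedge i\text{ even})\to\sigma(i)=0]\}$. This is a computable sequence of nowhere dense trees with $M=\bigcup_n[T_n]=\{x:x(i)=0\text{ for almost all even }i\}$. Every $g$ that vanishes on the even numbers satisfies $\{f:f\le^*g\}\subseteq M$, but such $g$ can be arbitrarily large on the odd numbers. Concretely, if $h=\Phi_\sol^M$, the computable $g$ that is $0$ on evens and $h+1$ on odds has $\Phi_\inst^g\subseteq M$ but $g\not\le^*h$. So from $g(k)>h(k)$ at isolated points $k$ you cannot build $f\le^*g$ outside $M$, because $g$ may force the other coordinates of any escaping block to be $0$.

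The missing idea is monotonicity, and the paper's verification relies on it. It argues with a non-decreasing instance $f$, so that $f(n)\le f(i)$ for every $i$ in the block $J_n$ placed after $n$. To make this an honest morphism, let $\Phi_\inst^g$ code $\{f:f\le^*g'\}$, where $g'(k)=\max_{j\le k}g(j)$. This is still uniformly computable, and since $g\le g'$ it suffices to prove $g'\le^*h$. Now a single $k$ with $g'(k)>h(k)$ gives $g'(i)>h(k)$ for all $i\ge k$. So an entire escaping extension appended at length $k$ lies below $g'$, provided $h(k)$ bounds all of its entries.

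This means you must commit to your second version of $h$. Let $h(k)$ be the largest entry of the least extensions out of $T_n$, over all $n\le k$ and all strings of length at most $k$ with entries at most $k$. Then:
\begin{itemize}
\item choose $k_0<k_1<\cdots$ with $g'(k_j)>h(k_j)$ and $k_j\ge j$;
\item take each $k_{j+1}$ past the end of the previous block and above all entries used so far;
\item at step $j$, pad with zeros to length $k_j$ and escape $T_j$.
\end{itemize}
This yields $f\le g'$ everywhere with $f\notin M$, which is the desired contradiction.

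With this repair your route is essentially the paper's. The paper instead assumes $T_0\subseteq T_1\subseteq\cdots$ and uses a single string $\sigma_{n,k}$ that escapes $T_n$ simultaneously from every string of length $k$ with entries at most $k$, iterating to obtain disjoint blocks. Your maximum over $n\le k$ and over all short strings plays the same role. Your treatment of the duals, $\opn{Dom}^\bot=\opn{Esc}$ and $\opn{Supset}(\mathcal M)^\bot=\opn{Spill}(\mathcal M)$, is fine.
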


\begin{proof}
    Let $\Phi_\inst$ map a function $f$ to the meagre set of all $f$-dominated functions, as in Proposition \ref{CapMtoDom}.
    For $\Phi_\sol$, we map a meagre set $M$ to a function $g$ such that if $M$ contains all $f$-dominated functions, then $f\le^* g$.
    Let $M=\langle T_n\rangle_{n\in\omega}$ be a code for a meagre set, and assume without loss of generality that $T_0\subseteq T_1\subseteq\cdots$.
    For each $n,k$, define the finite set
        $$S_{n,k}=\{\tau\in T_n:|\tau|=k\land \max\opn{range}\tau\le k\},$$
    and recursively find $\sigma_{n,k}$ so that $(S_{n,k}\hatc\sigma_{n,k})\cap T_n=\emptyset$.

    Define $g(0)=0$ and $g(n+1)=\max\opn{range}\sigma_{n+1,g(n)}$.
    Let $\Phi_\sol^M=g$.
    We can define the intervals
        $$J_n=[g(n),g(n)+|\sigma_{n,g(n)}|].$$

    Now, suppose towards contradiction that there is a (non-decreasing) function $f$ such that every $f$-dominated function is contained in $M$, 
    and $g$ does not dominate $f$.
    There is then an infinite set $X$ such that $g\restriction X\le f\restriction X$.
    We can assume that $X$ is such that the intervals $J_n$ are pairwise disjoint for $n\in X$.
    Define $x$ such that $x\restriction J_n=\sigma_{n,g(n)}$ for each $n\in X$, and $x(n)=0$ for $n\notin\bigcup_{n\in X}J_n$.
    For each $n\in X$ we have $x\notin T_n$ by the definition of $\sigma_{n,g(n)}$, and so $x\notin M$.

    For $n\in X$ and $i\in J_n$ we have that $x(i)\in\opn{range}(\sigma_{n,g(n)})$ so $x(i)\le g(n)\le f(n)$, 
    but $i\in J_n$ and so $i\ge g(n)\ge n$ and $f$ is non-decreasing so $f(n)\le f(i)$.
    Thus, $x\le f$ and so $x\in M$.

    If $f$ is a computable function and $M$ is a meagre set, and $\Phi_\inst^f\subseteq M$, then $M$ contains all $f$-dominated functions and so
    $f\le^*\Phi_\sol^M$.
\end{proof}

\subsection{Computably coded null sets}

As with meagre sets above, the morphisms between the Weihrauch problems for the null ideal are computable:

\begin{proposition}
    The morphism $\opn{Capture}(\mathcal N)\to\opn{Supset}(\mathcal N)$ is computable.
    Hence, $\opn{Cov}(\mathcal N)\le_s\opn{Cof}(\mathcal N)$ and $\opn{Add}(\mathcal N)\le_s\opn{Non}(\mathcal N)$.
\end{proposition}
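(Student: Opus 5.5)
We need a computable morphism from $\opn{Capture}(\mathcal N)=\langle\ttow,\mathcal N,\in\rangle$ to $\opn{Supset}(\mathcal N)=\langle\mathcal N,\mathcal N,\subseteq\rangle$. Proposition \ref{morphismmedvedev} then gives both Medvedev reductions at once. Since $\opn{Cov}(\mathcal N)=\massp(\opn{Capture}(\mathcal N))$ and $\opn{Cof}(\mathcal N)=\massp(\opn{Supset}(\mathcal N))$, the first reduction is direct. For the second, the duals are $\opn{Capture}(\mathcal N)^\bot=\opn{Pass}(\mathcal N)$, giving $\opn{Non}(\mathcal N)$, and $\opn{Supset}(\mathcal N)^\bot=\opn{Spill}(\mathcal N)$, giving $\opn{Add}(\mathcal N)$. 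So the dual clause yields $\opn{Add}(\mathcal N)\le_s\opn{Non}(\mathcal N)$.

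For the solution map I will take $\Phi_\sol$ to be the identity on codes for null sets: a null set containing $\{x\}$ is in particular a null set containing $x$. For the instance map, $\Phi_\inst$ sends $x\in\ttow$ to a code for the null set $\{x\}$. This mirrors Proposition \ref{CapMtoDom}, where the instance is turned into the canonical ideal member capturing it. The verification is then immediate: if $\Phi_\inst^x\subseteq N$, then $x\in N$.

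The only real content is checking that $\Phi_\inst$ is given uniformly by a Turing functional producing a legitimate code in the paper's format. That format is a sequence $\langle U_n\rangle_{n\in\omega}$ of coded open sets together with their measures $x_n=\lambda(U_n)\le 2^{-n}$ and $\bigcap_n U_n=\{x\}$.

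I would take $U_n=[x\restriction n]$, the basic clopen cylinder. Its measure is exactly $2^{-n}$, a rational computable from $n$ alone, so the measure component causes no difficulty. Under the paper's convention an open set is coded by a set whose complement codes a closed set, i.e.\ a tree. The complement of $[x\restriction n]$ is $[T_n]$, where $T_n$ consists of the strings comparable with some string of length $n$ other than $x\restriction n$. The tree $T_n$ is computable from $x$ uniformly in $n$: to decide $\tau\in T_n$ one only needs $x\restriction\min(n,|\tau|)$. Clearly $\bigcap_n U_n=\{x\}$.

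This is the step I expect to need the most care, since one must match the exact coding conventions. If the intended format instead requires $\lambda(U_n)$ to be given as a real with a fast approximation, that is still trivially uniform here. Given this, $\Phi_\inst$ and $\Phi_\sol$ are total Turing functionals on all inputs, and Proposition \ref{morphismmedvedev} applies.
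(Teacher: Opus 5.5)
Your proposal is correct and is the same as the paper's proof. $\Phi_\inst$ sends $x$ to the code $\langle[x\restriction n]^\prec\rangle_{n\in\omega}$ of $\{x\}$ with measures $2^{-n}$, $\Phi_\sol$ is the identity, and Proposition \ref{morphismmedvedev} gives both reductions, its dual clause unpacked as you do via $\opn{Capture}(\mathcal N)^\bot=\opn{Pass}(\mathcal N)$ and $\opn{Supset}(\mathcal N)^\bot=\opn{Spill}(\mathcal N)$; your tree $T_n$ is just the complement of the paper's code $[x\restriction n]^\prec$, so the coding point you flag is settled the same way.
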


\begin{proof}
    Let $\Phi_\inst$ map a point $x$ to the null set $\{x\}$, coded as 
    $\langle [x\!\!\restriction\!\!n]^\prec\rangle_{n\in\omega}$.
    The respective sequence of measures is just $\langle 2^{-n}\rangle_{n\in\omega}$.
    Let $\Phi_\sol$ be the identity.

    If $x$ is computable and $N$ is a null set, and $\Phi_\inst^x=\{x\}\subseteq N$, then we have that ${x\in N=\Phi_\sol^N}$.
\end{proof}

\begin{proposition}
    The morphism $\opn{Pass}(\mathcal N)\to\opn{Supset}(\mathcal N)$ is computable.
    Hence, $\opn{Non}(\mathcal N)\le_s\opn{Cof}(\mathcal N)$ and $\opn{Add}(\mathcal N)\le_s\opn{Cov}(\mathcal N)$.
\end{proposition}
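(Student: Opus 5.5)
We need a computable morphism from $\opn{Pass}(\mathcal N)=\langle\mathcal N,\ttow,\notni\rangle$ to $\opn{Supset}(\mathcal N)=\langle\mathcal N,\mathcal N,\subseteq\rangle$. We take $\Phi_\inst$ to be the identity on codes of null sets. For $\Phi_\sol$ we need a uniform procedure that, given a code for a null set $N'$, produces a point $x\notin N'$. Then the morphism condition is immediate: if $N\subseteq N'$ and $x=\Phi_\sol^{N'}\notin N'$, then $x\notin N$, i.e.\ $N\notni\Phi_\sol^{N'}$. Proposition \ref{morphismmedvedev} then yields $\opn{Non}(\mathcal N)\le_s\opn{Cof}(\mathcal N)$ and, dually, $\opn{Add}(\mathcal N)\le_s\opn{Cov}(\mathcal N)$. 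This is the measure analogue of the meagre case, where we found a path avoiding the trees.

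To construct $\Phi_\sol$, let the code of $N'$ be $\langle U_n\rangle_{n\in\omega}$, $\langle x_n\rangle_{n\in\omega}$, with $N'=\bigcap_nU_n$ and $\lambda(U_n)=x_n\le2^{-n}$. It suffices to find a point outside a single $U_n$, say $U_2$, whose measure is at most $1/4$. Its complement $C=\ttow\smallsetminus U_2$ is closed of measure at least $3/4$ and is given by the tree coded in the data. We build $x$ bit by bit, maintaining the invariant that $\lambda(C\cap[x\restriction k])>0$. At each step we choose the leftmost extension $i\in\{0,1\}$ for which $\lambda(C\cap[(x\restriction k)\hatc i])>0$. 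The limit $x$ lies in $C$: every initial segment meets $C$, since positive measure implies nonempty, and $C$ is closed. Hence $x\notin N'$.

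The main obstacle is deciding, computably in the code, whether $\lambda(U_2\cap[\sigma])<2^{-|\sigma|}$. This is exactly why the code includes the exact measures $x_n$. Because $U_2$ is given as a union of basic cylinders read off the coding set, we can enumerate $U_2$ and approximate $\lambda(U_2\cap[\tau])$ from below for every $\tau$. Knowing $\lambda(U_2)$ exactly, we get the values $\lambda(U_2\cap[\tau])$ from above as well, by subtracting the lower approximations for the other cylinders of the same length. So these quantities are computable reals uniformly in the code, but positivity of $2^{-|\sigma|}-\lambda(U_2\cap[\sigma])$ is not decidable from such approximations.

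I would resolve this by working with a margin. Instead of requiring positive measure, we require at stage $k$ that $\lambda(C\cap[x\restriction k])>2^{-k}\cdot c_k$ for a decreasing sequence of thresholds $c_k\to c>0$, for instance $c_k=\tfrac12+2^{-k-2}$, starting from $c_0=3/4$. If $\lambda(C\cap[\sigma])>2^{-|\sigma|}c_k$, then averaging shows that some child $\sigma\hatc i$ has relative measure greater than $c_k$. Since $c_k>c_{k+1}$, we can find such a child by approximating both children's measures to within $2^{-|\sigma|-1}(c_k-c_{k+1})$. This is a terminating search: we compute both approximations and pick the first child whose approximation clears the $c_{k+1}$ threshold. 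This makes $\Phi_\sol$ a (partial, but total on valid codes) Turing functional, as Proposition \ref{morphismmedvedev} requires. Every initial segment of the resulting $x$ meets $C$ in positive measure, so $x\in C$ and therefore $x\notin N'$.
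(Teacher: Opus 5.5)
This is the paper's argument. $\Phi_\inst$ is the identity, and $\Phi_\sol$ builds, bit by bit, a point outside a single $U_n$ (the paper uses $U_1$) using the exact measure supplied in the code. You are more explicit than the paper about making the choice of the next bit effective, but the margin device is not needed. The upper approximations to $\lambda(U_2\cap[\tau])$ that you describe make $\lambda(C\cap[\tau])$ uniformly left-c.e.\ in the code, so its positivity is $\Sigma^0_1$. Since at least one child of the current string has positive measure, a dovetailed search over the two children for one whose lower approximation is already positive always terminates. That is presumably what the paper means by ``we can always find the next bit.''

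If you keep the margin version, the bookkeeping as written does not preserve your invariant. Take two-sided error $\epsilon=2^{-|\sigma|-1}(c_k-c_{k+1})$ and threshold $2^{-|\sigma|-1}c_{k+1}$. A child whose approximation clears the threshold is then only guaranteed relative measure $>2c_{k+1}-c_k=\tfrac12$, not $>c_{k+1}$. From such a string at the next stage, neither child need clear $c_{k+2}$, so the procedure can stall. There are two easy fixes:
\begin{itemize}
\item use lower approximations, so that clearing the threshold certifies the true value; or
\item approximate to within $\epsilon/2$ and compare against $2^{-|\sigma|-1}(c_k+c_{k+1})/2$.
\end{itemize}
Also, $x_2\le\tfrac14$ only gives $\lambda(C)\ge\tfrac34$, so the strict invariant can fail at stage $0$. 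Either start with a non-strict inequality or use $U_3$.
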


\begin{proof}
    Let $\Phi_\inst$ be the identity.
    Let $\Phi_\sol$ map a null set $N=\bigcap_nU_n$ to a point $y\notin N$.
    We achieve this by finding $y\notin U_1$, which can be done as $\lambda(U_1)\le\frac12$.
    For each finite string $\sigma$ we have that $\lambda([\sigma]\smallsetminus U_1)$ is computable from $\lambda(U_1\smallsetminus[\sigma])$
    which can be uniformly computed from $U_1$ and $\lambda(U_1)$:
    it can be approximated from below by using the coding $U_1=\bigcup_n[\sigma_n]$ and 
    observing that the nature of each $[\sigma_n]\smallsetminus[\sigma]$ is easy to determine.
    From above we use $\lambda(U_1)$ and $U_1\cap[\sigma]$ in the same way.
    Thus, we can always find the next bit to make $\lambda([y\restriction n]\smallsetminus U_1)>0$ and in this way construct all of $y$.

    If $N$ and $\tilde N$ are null sets, and $\Phi_\inst^N=N\subseteq\tilde N$, then $\Phi_\sol^{\tilde N}$ is not in $\tilde N$ and so it is 
    also not in $N$.
\end{proof}

Connecting these to the previous Weihrauch problems is more involved.
In order to relate the ideals of meagre and null sets, it will be much easier to bridge the difference in underlying spaces first.
Let $\mathcal M_C$ be the ideal of $\mathbf\Sigma^0_2$ meagre subsets of $\ttow$.
The map $\rho:\wtow\to\ttow$ is defined \cite{GKT:2019} by
    $$\rho(f)=0^{f(0)}10^{f(1)}10^{f(2)}1\cdots,$$
a computable bijection from elements of $\wtow$ to infinite subsets of $\omega$.
They also use the same notation for the map $\rho:{}^{<\omega}\omega\to{}^{<\omega}2$ where 
    $$\rho(\langle n_0,\cdots,n_k\rangle)=0^{n_0}1\cdots0^{n_k}1.$$
From these they define maps $\rho^*$ and $\rho_*$ which are useful for converting between meagre sets in $\mathcal M$ and in $\mathcal M_C$:
    $$\rho^*(M)=\rho^{-1}[M],$$
    $$\rho_*(M)=\rho[M]\cup\{x\in\ttow:x\text{ is finite}\}.$$
These maps preserve meagreness.
Codes for their outputs can be obtained effectively from codes for their inputs:
In the case of $\rho^*$, map
    $$\langle T_n\rangle_{n\in\omega}\mapsto\langle\rho^{-1}[T_n]\rangle_{n\in\omega}.$$
It is slightly more complicated for $\rho_*$:
for each $n$, let $$\tilde T_n=\{\rho(\sigma)\hatc0^k:\sigma\in T_n\text{ and }k\in\omega\}\cup\{\tau:|\tau^{-1}(1)|\le n\},$$
and map $\langle T_n\rangle_{n\in\omega}$ to $\langle\tilde T_n\rangle_{n\in\omega}$.

We can use these maps to obtain the desired effective morphisms between Weihrauch problems.

\begin{proposition}
    The morphisms $\opn{Supset}(\mathcal M)\leftrightarrow\opn{Supset}(\mathcal M_C)$ are computable.
    Hence, $\opn{Cof}(\mathcal M)\equiv_s\opn{Cof}(\mathcal M_C)$ and $\opn{Add}(\mathcal M)\equiv_s\opn{Add}(\mathcal M_C)$
\end{proposition}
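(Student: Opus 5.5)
We need morphisms in both directions between $\opn{Supset}(\mathcal M)=\langle\mathcal M,\mathcal M,\subseteq\rangle$ and $\opn{Supset}(\mathcal M_C)$, with both components given by Turing functionals on codes. Then Proposition \ref{morphismmedvedev} gives $\opn{Cof}(\mathcal M)\le_s\opn{Cof}(\mathcal M_C)$ and the dual reduction $\opn{Add}(\mathcal M_C)\le_s\opn{Add}(\mathcal M)$ (the dual of $\opn{Supset}$ is $\opn{Spill}$), and symmetrically for the other direction. The natural candidates are the maps $\rho^*$ and $\rho_*$ already described, together with the observation above that codes for their outputs are uniformly computable from codes for their inputs.

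For $\opn{Supset}(\mathcal M)\to\opn{Supset}(\mathcal M_C)$ I take $\varphi_\inst=\rho_*$ and $\varphi_\sol=\rho^*$. Given $M\in\mathcal M$ and $N\in\mathcal M_C$ with $\rho_*(M)\subseteq N$, I must show $M\subseteq\rho^*(N)=\rho^{-1}[N]$. If $f\in M$ then $\rho(f)\in\rho[M]\subseteq\rho_*(M)\subseteq N$, so $f\in\rho^{-1}[N]$.

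For $\opn{Supset}(\mathcal M_C)\to\opn{Supset}(\mathcal M)$ I take $\varphi_\inst=\rho^*$ and $\varphi_\sol=\rho_*$. Given $N\in\mathcal M_C$ and $M\in\mathcal M$ with $\rho^{-1}[N]\subseteq M$, I must show $N\subseteq\rho_*(M)$. Take $x\in N$. If $x$ has finitely many $1$s, then $x\in\rho_*(M)$ by definition. Otherwise $x=\rho(f)$ for a unique $f$, since $\rho$ is a bijection onto the sequences with infinitely many ones. Then $f\in\rho^{-1}[N]\subseteq M$, so $x\in\rho[M]$. This is exactly why the finite sets are added in $\rho_*$.

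What remains is effectivity and well-definedness on codes. I must check that $\rho^{-1}[T_n]$ and the trees $\tilde T_n$ are nowhere dense trees whose bodies give $\rho^*(M)$ and $\rho_*(M)$. I also need these tree maps to be Turing functionals, so that the hypotheses of Proposition \ref{morphismmedvedev} hold with $\Phi_\inst,\Phi_\sol$.

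I expect the main obstacle to be $\tilde T_n$. I must verify that $[\tilde T_n]$ is nowhere dense. The set $\{\tau:|\tau^{-1}(1)|\le n\}$ is handled by extending with $n+1$ ones. For the $\rho(\sigma)\hatc0^k$ part, I would extend a string to the form $\rho(\sigma')$ with $\sigma'\notin T_n$ and then append a $1$. I also need the bodies to union to $\rho[M]$ together with the finite sets. Membership in $\tilde T_n$ is decidable from $T_n$ by decoding $\tau$ as $\rho(\sigma)\hatc0^k$, so computability is clear. If $T_n$ contains strings whose own extensions are outside $T_n$, then $\tilde T_n$ may need to be closed downward. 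I would fix this by taking prefixes, which keeps the construction computable.
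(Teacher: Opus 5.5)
Your proposal is correct and follows the paper's proof. You use the same instance and solution maps ($\rho_*,\rho^*$ in the forward direction, $\rho^*,\rho_*$ in the reverse), the same inclusion arguments (your finite/infinite case split is exactly the paper's ``disregard finite elements''), and Proposition \ref{morphismmedvedev} with $\opn{Spill}$ as the dual of $\opn{Supset}$. Your extra check that $\rho^{-1}[T_n]$ and $\tilde T_n$ are nowhere dense trees with the right bodies fills in what the paper only asserts ("computable, as described above"). The downward-closure worry is unnecessary: $\tilde T_n$ is automatically a tree whenever $T_n$ is.
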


\begin{proof}
    In the forwards direction, let $\Phi_\inst$ map a meagre set $M$ to $\rho_*(M)$ and $\Phi_\sol$ map $M$ to $\rho^*(M)$.
    We obtain the morphism if $\rho_*(M)\subseteq\tilde M$ implies that $M\subseteq\rho^*(\tilde M)$.
    This holds as $\rho$ never outputs finite elements of $\ttow$, so $\rho^*\circ\rho_*$ is the identity as $\rho$ is injective.

    In the reverse direction, let $\Phi_\inst$ map a meagre set $M$ to $\rho^*(M)$ and $\Phi_\sol$ map $M$ to $\rho_*(M)$.
    Similarly, we need that $\rho^*(M)\subseteq\tilde M$ implies that $M\subseteq\rho_*(\tilde M)$.
    This also follows immediately once we disregard finite elements.

    These maps are computable, as described above, so we obtain the desired relations between mass problems.
\end{proof}

Unfortunately, we do not get a computable equivalence for the remaining problems.
One direction does hold:

\begin{proposition}
    \label{capturebairecantor}
    The morphism $\opn{Capture}(\mathcal M)\to\opn{Capture}(\mathcal M_C)$ is computable.
    Hence, $\opn{Cov}(\mathcal M)\le_s\opn{Cov}(\mathcal M_C)$ and $\opn{Non}(\mathcal M_C)\le_s\opn{Non}(\mathcal M)$
\end{proposition}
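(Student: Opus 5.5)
We need an effective morphism $\opn{Capture}(\mathcal M)\to\opn{Capture}(\mathcal M_C)$. Here $\opn{Capture}(\mathcal M)=\langle\wtow,\mathcal M,\in\rangle$ and $\opn{Capture}(\mathcal M_C)=\langle\ttow,\mathcal M_C,\in\rangle$. The natural choice is to take $\Phi_\inst$ to be $\rho$, sending $f\in\wtow$ to $\rho(f)=0^{f(0)}10^{f(1)}1\cdots$. For $\Phi_\sol$ we take $\rho^*$, sending a code $\langle T_n\rangle_{n\in\omega}$ for a meagre subset of $\ttow$ to the code $\langle\rho^{-1}[T_n]\rangle_{n\in\omega}$ described above.

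\textbf{Step 1: the maps are computable.} Both maps are given by Turing functionals. For $\rho$, each bit of $\rho(f)$ is determined by finitely many values of $f$. For $\rho^*$, to decide whether a string $\sigma\in{}^{<\omega}\omega$ lies in $\rho^{-1}[T_n]$ we ask whether $\rho(\sigma)\in T_n$. This is a single query to the code. In particular, $\rho$ sends computable functions to computable points of $\ttow$, and $\rho^*$ sends computable codes to computable codes.

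\textbf{Step 2: $\rho^*$ produces a valid code.} We must check that each $\rho^{-1}[T_n]$ is a nowhere dense tree in ${}^{<\omega}\omega$. It is a tree because $\rho$ on strings preserves prefixes: $\sigma\preceq\tau$ implies $\rho(\sigma)\preceq\rho(\tau)$, and $T_n$ is closed downward.

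For nowhere density, fix $\sigma$. The tree $T_n$ is nowhere dense, so some extension $\eta\succeq\rho(\sigma)\hatc1$ satisfies $\eta\notin T_n$. Extend $\eta$ further to $\eta'\hatc1$, where $\eta'$ is $\eta$ with its trailing zeros removed or padded appropriately; since $T_n$ is a tree, any extension of $\eta$ also avoids $T_n$. So we may assume $\eta$ ends in $1$. Then $\eta=\rho(\tau)$ for some $\tau\succeq\sigma$, and $\tau\notin\rho^{-1}[T_n]$.

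It also follows directly that $[\rho^{-1}[T_n]]=\rho^{-1}[[T_n]]$, since a branch $f$ lies in the left side exactly when every $\rho(f\restriction k)$ lies in $T_n$. Hence $\rho^*$ codes $\rho^{-1}[M]$, which is the preimage the paper already asserts is meagre.

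\textbf{Step 3: the morphism condition.} Suppose $\rho(f)\in M$ for some $M\in\mathcal M_C$. Then $f\in\rho^{-1}[M]=\rho^*(M)$, so $\Phi_\sol^M$ captures $f$. This is immediate.

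Applying Proposition~\ref{morphismmedvedev} then gives $\opn{Cov}(\mathcal M)\le_s\opn{Cov}(\mathcal M_C)$. It also gives the dual statement: $\opn{Pass}(\mathcal M_C)\to\opn{Pass}(\mathcal M)$ with the same maps, so $\opn{Non}(\mathcal M_C)\le_s\opn{Non}(\mathcal M)$.

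The only step needing care is Step 2, the nowhere density of $\rho^{-1}[T_n]$. The subtlety is arranging that the escaping extension of $\rho(\sigma)$ actually lies in the range of $\rho$ on strings. Ending the extension with a $1$ handles this.
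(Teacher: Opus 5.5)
Your proposal is correct and follows the paper's proof. You use the same pair, $\rho$ on instances and $\rho^*$ on solutions; the morphism condition is immediate, and Proposition~\ref{morphismmedvedev} gives both reductions. Your Step~2, which checks that each $\rho^{-1}[T_n]$ is a nowhere dense tree, is extra detail that the paper leaves implicit in ``these maps preserve meagreness''. One small wording fix there: simply take $\eta\hatc1$, which extends $\rho(\sigma)$, ends in $1$ and lies outside $T_n$. Deleting trailing zeros of $\eta$ and then appending $1$ need not give an extension of $\eta$, so that string could lie in $T_n$.
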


\begin{proof}
    Let $\Phi_\inst$ map a function $f$ to $\rho(f)$, and let $\Phi_\sol$ map a meagre set $M$ to $\rho^*(M)$.
    Clearly, if $\rho(f)\in M$ then $f\in\rho^*(M)$ by definition.
    These maps are computable.
\end{proof}

There is an effective morphism $\opn{Capture}(\mathcal M_C)\to\opn{Capture}(\mathcal M)$ described by \cite[Lemma 3.7]{GKT:2019}.
They want to use $\rho^{-1}$ for instances and $\rho_*$ for solutions, but this fails to be computable as there is no computable way of
telling whether an instance is infinite, i.e.\ an element of the range of $\rho$.
This is not a proof that the mass problems differ in Muchnik degree, but a result to the contrary would be surprising.
We can get one of the corresponding Medvedev reductions as the solution map here is indeed computable:

\begin{proposition}
    $\opn{Cov}(\mathcal M)\equiv_s\opn{Cov}(\mathcal M_C)$.
    That is, every sequence of computable codes for meagre sets in Baire space which together cover $\wtow\cap\Delta^0_1$
    computes a sequence of computable codes for meagre sets in Cantor space which together cover $\ttow\cap\Delta^0_1$, and vice versa.
\end{proposition}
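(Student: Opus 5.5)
One direction is already done: Proposition \ref{capturebairecantor} gives $\opn{Cov}(\mathcal M)\le_s\opn{Cov}(\mathcal M_C)$. For the converse, $\opn{Cov}(\mathcal M_C)\le_s\opn{Cov}(\mathcal M)$, I will not use Proposition \ref{morphismmedvedev}, since the instance map $\rho^{-1}$ is not computable. Instead I will build the reduction directly from the solution map $\rho_*$, which is computable as described above. Given $\langle M_e\rangle_{e\in\omega}\in\opn{Cov}(\mathcal M)$, I output the sequence $\langle\rho_*(M_e)\rangle_{e\in\omega}$, together with one extra column $M_\infty$ placed first (say at index $0$, shifting the others). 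Here $M_\infty$ is the meagre set of all $x\in\ttow$ that are eventually $0$, i.e.\ that have only finitely many $1$s. It is coded computably by the nowhere dense trees $\{\tau:|\tau^{-1}(1)|\le n\}$, and these codes already appear as parts of $\tilde T_n$ in the definition of $\rho_*$. The whole map is uniformly computable from $\langle M_e\rangle$, so it is a Turing functional $\Gamma$ witnessing a Medvedev reduction, provided the output lies in $\opn{Cov}(\mathcal M_C)$.

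For correctness I check two things. First, each column is a computable code for a meagre subset of $\ttow$. For $M_\infty$ this is immediate. For $\rho_*(M_e)$ it holds because the code for $M_e$ is computable and $\rho_*$ acts uniformly on codes. I will check briefly that each $\tilde T_n$ is nowhere dense: above any string, first append enough $1$s to exceed $n$ ones, then extend along $\rho$ of a string that leaves $T_n$. Second, the output covers every computable $x\in\ttow$. If $x$ has finitely many $1$s, then $x\in M_\infty$. (Strictly, $\rho_*(M_e)$ already contains such $x$ by definition, so the extra column is just a convenience.) If $x$ has infinitely many $1$s, then $x=\rho(f)$ for the function $f$ that reads off the successive lengths of the $0$-blocks of $x$. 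This $f$ is computable from $x$, so it is computable, and by the covering property some $M_e$ contains $f$. Hence $x=\rho(f)\in\rho[M_e]\subseteq\rho_*(M_e)$.

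The key point is that only computable inputs matter. The noncomputable case distinction (is $x$ finite or not?) is made in the verification, not in the functional, so the functional never has to decide it. This is exactly what fails for the instance map, and it explains why the converse holds even though the morphism itself is not computable. The main thing to check carefully is that $\rho_*$ of a code is really a valid code for a meagre set containing $\rho[M_e]$: each tree $\tilde T_n$ must be nowhere dense, and $\bigcup_n[\tilde T_n]$ must contain $\rho[\bigcup_n[T_n]]$. I expect this to be routine from the stated definition of $\tilde T_n$.
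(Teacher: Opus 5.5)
Your proposal is correct and is essentially the paper's proof: the paper also maps $\langle M_e\rangle_{e\in\omega}$ to $\langle\rho_*(M_e)\rangle_{e\in\omega}$, and it makes the finite/infinite case split on computable $x$ only in the verification, using that $\rho^{-1}(x)$ is computable when $x$ is infinite. Your extra column $M_\infty$ is redundant, since, as you note, each $\rho_*(M_e)$ already contains every finite $x$; your check that each $\tilde T_n$ is nowhere dense fills in a detail the paper leaves implicit.
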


\begin{proof}
    One direction was shown in Proposition \ref{capturebairecantor}.
    For the other, let ${\langle M_e\rangle_{e\in\omega}\in\opn{Cov}(\mathcal M)}$.
    We argue that $\langle\rho_*(M_e)\rangle_{e\in\omega}\in\opn{Cov}(\mathcal M_C)$.
    Fix $x\in\ttow$ computable. If $x$ is finite, then it is a member of each $\rho_*(M_e)$.
    If $x$ is infinite, then $\rho^{-1}(x)$ is defined and is hence contained in some $M_e$, 
    then by the definition of $\rho_*$, we will have $x\in\rho_*(M_e)$ and we are done.
\end{proof}

Thankfully, the direction we do have for $\opn{Non}(\mathcal M)$ and $\opn{Non}(\mathcal M_C)$ 
is the one we need to maintain the structure of Cicho\'n's diagram.
All that remains is to connect the mass problems for the meagre and null ideals.
One morphism \cite{GKT:2019} makes use of the following well-known fact, for which we give the author's preferred proof:

\begin{proposition}
    \label{partmeagernull}
    There is a meagre set $M$ with a computable code and a null set $N$ with a computable code which form a partition of $\ttow$.
\end{proposition}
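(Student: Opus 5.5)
The plan is to realise the standard partition via a computable sequence of open sets of small measure that are all dense. I will construct, uniformly computably in $n$, open sets $U_n\subseteq\ttow$ that are dense and satisfy $\lambda(U_n)\le 2^{-n}$ with $\lambda(U_n)$ computable. Then $N=\bigcap_n U_n$ is a $\Pi^0_2$ null set with a computable code, and its complement $M=\bigcup_n(\ttow\smallsetminus U_n)$ is a countable union of closed sets with dense complement. Each of these closed sets is therefore nowhere dense, so $M$ is a computably coded $\Sigma^0_2$ meagre set, and $M$ and $N$ partition $\ttow$ by construction.

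For the construction, I will enumerate all finite binary strings computably as $\tau_0,\tau_1,\ldots$ and set
$$U_n=\bigcup_{i\in\omega}[\tau_i\hatc0^{n+i+1}]^\prec.$$
Each $U_n$ meets every basic open set $[\tau_i]^\prec$, so it is dense. Its measure is bounded by $\sum_i 2^{-|\tau_i|-n-i-1}\le 2^{-n}$.

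To match the coding conventions of Section \ref{meagrenull}, the closed set $\ttow\smallsetminus U_n$ should be given as the set of paths of a computable tree $T_n$. I take $T_n$ to be the set of strings $\sigma$ that extend no $\tau_i\hatc0^{n+i+1}$ with code length at most $|\sigma|$. The length bound makes membership in $T_n$ decidable, and we have $[T_n]=\ttow\smallsetminus U_n$. Density of $U_n$ gives that $T_n$ is nowhere dense: every string $\tau_i$ has the extension $\tau_i\hatc0^{n+i+1}\notin T_n$.

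The step I expect to be the main obstacle is meeting the requirement that the code for a null set includes the exact measure $x_n=\lambda(U_n)$ as a computable real. The union defining $U_n$ overlaps, so its measure is not simply the sum above. To fix this, I will pass to a disjoint presentation. I enumerate the cylinders one at a time, and at stage $i$ I add only the finitely many cylinders of $[\tau_i\hatc0^{n+i+1}]^\prec$ not already covered by earlier ones; this is a computable finite operation on clopen sets. The measure contributed at stage $i$ is then at most $2^{-n-i-1}$, so the partial sums converge to $\lambda(U_n)$ with a computable modulus. Hence $\lambda(U_n)$ is a computable real, uniformly in $n$, and $N$ has a computable code in the required sense.
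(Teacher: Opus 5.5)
Your proof is correct and takes essentially the paper's approach: cover a computable dense set of points by dense open sets $U_n$ of measure at most $2^{-n}$ (you use the eventually-zero sequences $\tau_i\hatc0^\omega$, the paper uses binary expansions of rationals), take $N=\bigcap_n U_n$, and let $M$ be its complement, a countable union of nowhere dense closed sets. Your version is more careful than the paper's. You give explicit decidable tree codes and verify that $\lambda(U_n)$ is uniformly computable, as the paper's coding convention requires but its proof does not check. Your exponent $n+i+1$ also gives the bound $2^{-n}$ exactly, whereas the paper's sum $\sum_i 2^{-(i+n)}$ is actually $2^{1-n}$.
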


\begin{proof}
    Let $P=\{p_i:i\in\omega\}$ be a computable listing of a countable dense subset of $\ttow$, 
    for example, the binary representations of the rational numbers in $[0,1]$.
    For each $i$ and $n$, let $U_{i,n}=[p_i\restriction(i+n)]$.
    We have for each $n$ that $V_n=\bigcup_iU_{i,n}$ is an open cover of $P$ with measure bounded by $\sum_i2^{-(i+n)}=2^{-n}$.
    We also have that the complement of $V_n$ must be nowhere dense, as $V_n$ is dense open.
    Thus, the set $\bigcap_nV_n$ will be null, and its complement is a countable union of nowhere dense sets and is hence meagre.
\end{proof}

We use this in order to map points in Cantor space to null and meagre sets, and this satisfies our desired properties for the following morphism:

\begin{proposition}
    The morphism $\opn{Capture}(\mathcal M_C)\to\opn{Pass}(\mathcal N)$ is computable.
    Hence, $\opn{Cov}(\mathcal M_C)\le_s\opn{Non}(\mathcal M)$ and $\opn{Non}(\mathcal N)\le_s\opn{Cov}(\mathcal M_C)$.
\end{proposition}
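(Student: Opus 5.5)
The plan is the standard translation argument, using the partition of Proposition \ref{partmeagernull}. Fix the computable codes for the meagre set $M$ and the null set $N$ with $M\cup N=\ttow$ and $M\cap N=\emptyset$. For $x,y\in\ttow$ write $x\oplus y$ for the bitwise sum modulo $2$; for a set $S\subseteq\ttow$ write $x\oplus S=\{x\oplus z:z\in S\}$. The morphism $\opn{Capture}(\mathcal M_C)\to\opn{Pass}(\mathcal N)$ will be:
\begin{itemize}
    \item $\Phi_\inst$ sends a point $x\in\ttow$ to the null set $x\oplus N$;
    \item $\Phi_\sol$ sends a point $y\in\ttow$ to the meagre set $y\oplus M\in\mathcal M_C$.
\end{itemize}

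The morphism condition is a short computation. Suppose $\Phi_\inst^x\notni y$, that is, $y\notin x\oplus N$. Then $x\oplus y\notin N$, so $x\oplus y\in M$ because $M$ and $N$ partition $\ttow$. Hence $x\in y\oplus M=\Phi_\sol^y$, which says exactly that $x\,\opn{Capture}(\mathcal M_C)\,\Phi_\sol^y$.

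The remaining work is to check that both maps are computable on codes, uniformly in the oracle $x$ or $y$. Translation by $x$ is a homeomorphism of $\ttow$ that preserves measure. On finite strings it acts by $\sigma\mapsto\sigma\oplus(x\restriction|\sigma|)$, which is computable from $x$.
\begin{itemize}
    \item For the null set: if $N$ is coded by open sets $U_n=\bigcup_i[\sigma_{n,i}]$ together with measures $\lambda(U_n)\le2^{-n}$, then $x\oplus N$ is coded by the open sets $\bigcup_i[\sigma_{n,i}\oplus(x\restriction|\sigma_{n,i}|)]$ with the same sequence of measures.
    \item For the meagre set: if $M$ is coded by nowhere dense trees $T_n$, then $y\oplus M$ is coded by the trees $\{\tau\oplus(y\restriction|\tau|):\tau\in T_n\}$. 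These are still trees, and still nowhere dense because translation is a homeomorphism.
\end{itemize}
Both transformations are uniform Turing functionals, so the hypotheses of Proposition \ref{morphismmedvedev} hold. That proposition then yields the two Medvedev reductions, $\massp(\opn{Capture}(\mathcal M_C))\le_s\massp(\opn{Pass}(\mathcal N))$ and the reduction between the dual problems.

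The only step needing care is the bookkeeping for the null-set code: the measure component must be preserved exactly, not just bounded. This is immediate because translation maps each basic cylinder $[\sigma]$ to a cylinder of the same length, and it maps disjoint cylinders to disjoint cylinders, so each $\lambda(U_n)$ is unchanged.
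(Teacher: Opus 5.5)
Your argument is correct and is exactly the paper's proof. Your $\oplus$ is its $\triangle$, the maps $x\mapsto x\oplus N$ and $y\mapsto y\oplus M$ and the verification $y\notin x\oplus N\Rightarrow x\oplus y\in M\Rightarrow x\in y\oplus M$ coincide with the paper's, and your explicit translation of the tree and measure codes fills in what the paper only indicates via $x\triangle[\sigma]=[(x\restriction|\sigma|)\triangle\sigma]$.

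One remark: Proposition~\ref{morphismmedvedev} here yields $\opn{Cov}(\mathcal M_C)\le_s\opn{Non}(\mathcal N)$ and $\opn{Cov}(\mathcal N)\le_s\opn{Non}(\mathcal M_C)$, matching the arrows in Figure~\ref{cichondiagrammp}. It does not yield the relations printed in the ``Hence'' clause, which appear to be typos, so you should state these two reductions explicitly rather than leaving the dual one unnamed.
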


\begin{proof}
    By Proposition \ref{partmeagernull}, fix computable meagre and null sets $M$ and $N$ respectively which partition $\ttow$.
    Let $\triangle$ be the symmetric difference operation on members of $\ttow$, and consider the induced operation with a set on one side:
        $$x\triangle Y=\{x\triangle y:y\in Y\}.$$
    Let $\Phi_\inst$ take a point $x$ and map it to $x\triangle N$.
    Similarly, let $\Phi_\sol$ take a point $y$ and map it to $y\triangle M$.

    Firstly, note that these maps preserve measure and category, as for any $x$ and basic open set $[\sigma]$ we have
        $$x\triangle[\sigma]=[(x\!\restriction\!|\sigma|)\triangle\sigma],$$
    a basic open set with the same measure.

    If $x,y\in\ttow$ are computable, and $\Phi_\inst^x=x\triangle N\notni\,y$ then $x\triangle y\notin N$ 
    so $x\triangle y\in M$ and $x\in y\triangle M=\Phi_\sol^y$.
\end{proof}

The reduction in the following is shown in \cite{GKT:2019} indirectly via several reductions through other Weihrauch problems.
The tools they use are not relevant for this section of this thesis,
so we suggest that the reader verifies through their proof that this morphism is computable.

\begin{proposition}
    The morphism $\opn{Supset}(\mathcal M_C)\to\opn{Supset}(\mathcal N)$ is computable.
    Hence, $\opn{Cof}(\mathcal M_C)\le_s\opn{Cof}(\mathcal N)$ and $\opn{Add}(\mathcal N)\le_s\opn{Add}(\mathcal M_C)$.
    \qed
\end{proposition}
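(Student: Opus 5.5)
The plan is to build the morphism $\opn{Supset}(\mathcal M_C)\to\opn{Supset}(\mathcal N)$ as a composite of computable morphisms through an intermediate localization problem, following the route of \cite{GKT:2019}. Proposition \ref{morphismmedvedev} then yields both stated Medvedev reductions at once, since $\opn{Cof}=\massp(\opn{Supset})$ and $\opn{Add}=\massp(\opn{Spill})=\massp(\opn{Supset}^\bot)$. Because morphisms compose, and composites of Turing functionals are Turing functionals, it suffices to check that each link is given by Turing functionals on computable instances and solutions.

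For the intermediate problem I would take slaloms. Let $\opn{Loc}$ be the problem whose instances are functions $f\in\wtow$ and whose solutions are slaloms $S$ with $S(n)$ a finite set of size at most $2^n$ (or $n$), with $f\,\opn{Loc}\,S$ iff $f(n)\in S(n)$ for almost all $n$. The standard Bartoszy\'nski argument gives $\opn{Supset}(\mathcal M_C)\to\opn{Loc}$ and $\opn{Loc}\to\opn{Supset}(\mathcal N)$.

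For $\opn{Loc}\to\opn{Supset}(\mathcal N)$, fix a computable partition of $\omega$ into intervals $I_n$ with $|I_n|$ large, say $|I_n|=2n$. Send an instance $f$ (read as coding, for each $n$, a clopen set $C_n\subseteq{}^{I_n}2$ of measure at most $4^{-n}$) to the null set $\{x:\exists^\infty n\, x\restriction I_n\in C_n\}$. Send a slalom $S$ to the null set $\{x:\exists^\infty n\, x\restriction I_n\in\bigcup S(n)\}$. Both codes are computed uniformly, including the measures of the open sets $\bigcup_{m\ge k}[\ldots]$, which are computable because the pieces are clopen with explicitly summable measure. The instance map must also cover an arbitrary $\mathbf\Pi^0_2$ null set $N$, so I would compose on the instance side with the computable step of finding, from a code for $N$ with $\lambda(U_n)\le2^{-n}$, sets $C_n$ as above such that $N$ is contained in the resulting set. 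This step uses the computability of the measures $x_n$ to carve the finite approximations of $U_k$ into the blocks $I_n$.

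For $\opn{Supset}(\mathcal M_C)\to\opn{Loc}$, send a meagre code $\langle T_n\rangle$ to the function $n\mapsto$ (canonical index of the finite sequence of extensions $\sigma_{n,\tau}$ escaping $T_n$ for all $\tau$ of length $\le n$). This is computable from the trees as in the proof of the earlier $\opn{Dom}\to\opn{Supset}(\mathcal M)$ step. Send a slalom $S$ to the meagre set of reals avoiding, for almost all interval blocks, every pattern listed in $S(n)$. Verifying the morphism condition is the usual combinatorial check.

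The main obstacle I expect is the effectivity of the null-set manipulations. Producing a code in the paper's sense, with exact measures $x_n$ and not merely upper bounds, requires the measures to be computable uniformly from the input, and recovering the block decomposition of an arbitrary $\mathbf\Pi^0_2$ null code needs the given measures $x_n$. I would handle this by keeping every set produced a countable union of clopen pieces with computable tail sums, so that each $x_n$ is a computable real uniformly in the input.
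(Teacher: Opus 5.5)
Factoring through a localization problem and then applying Proposition~\ref{morphismmedvedev} is a reasonable plan. The paper itself gives no argument here; it only refers to \cite{GKT:2019}, where the morphism is obtained by composing several reductions. However, your second link points the wrong way. A morphism $\opn{Loc}\to\opn{Supset}(\mathcal N)$ needs an instance map $f\mapsto N_f$ and a solution map $N\mapsto S_N$ \emph{from null sets to slaloms}, with $N_f\subseteq N\Rightarrow f\in^*S_N$. Your map $f\mapsto\{x:\exists^\infty n\,x\restriction I_n\in C_n\}$ has the right type. The other two, $S\mapsto\{x:\exists^\infty n\,x\restriction I_n\in\bigcup S(n)\}$ and $N\mapsto f_N$, send slaloms to null sets and null sets to functions. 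Together these form the morphism $\opn{Supset}(\mathcal N)\to\opn{Loc}$, with condition $f_N\in^*S\Rightarrow N\subseteq N_S$. That is the easy half of Bartoszy\'nski's theorem, $\cof(\mathcal N)\le\card(\opn{Loc})$. Combined with $\opn{Supset}(\mathcal M_C)\to\opn{Loc}$, it only bounds both $\cof(\mathcal M)$ and $\cof(\mathcal N)$ by $\card(\opn{Loc})$, so it does not compose to $\opn{Supset}(\mathcal M_C)\to\opn{Supset}(\mathcal N)$.

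The missing ingredient is the hard half: a Turing functional turning a code of an arbitrary $\mathbf\Pi^0_2$ null set $N$ into a slalom of bounded width that localizes every $f$ with $N_f\subseteq N$. This needs a genuine argument, not bookkeeping. In your coding the pieces $P_n=\{x:x\restriction I_n\in C_n\}$ are independent. So $U_m=\{x:x\in P_n\cap P_{n'}\text{ for some }m\le n<n'\}$ is open, $N_f=\bigcap_{m\ge1}U_m$, and $\lambda(U_m)\le4^{1-m}$. Yet $\lambda(P_n\smallsetminus U_m)\ge\frac23\lambda(P_n)$ for all $n$ and all $m\ge1$. Hence putting into $S_N(n)$ the values whose piece is mostly covered by the open sets of the code need not capture $f(n)$. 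Whatever replaces this must also be checked to be uniform, since comparing measures with thresholds is not decidable from codes. Separately, the fixed-block normal form you invoke for arbitrary null sets is false. With $|I_n|=2n$ and $\lambda(C_n)\le4^{-n}$, each $C_n$ contains at most one string, and $\{x:\forall k\,x(2k)=0\}$ contains a point that misses $C_n$ on every block with $n\ge1$.

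The first link is also not a routine check as sketched. Suppose $M_S$ consists of the reals that, for almost all $n$, avoid every pattern in $S(n)$. Then $f_M\in^*S$ does not give $M\subseteq M_S$, because a point of $M$ is only guaranteed to avoid the true candidate $f_M(n)$. Concretely, add to every $S(n)$ the pattern $\langle\langle x_0(|\tau|)\rangle:|\tau|\le n\rangle$ for a fixed $x_0\in M$; this puts $x_0$ outside $M_S$. The natural set that does contain $M$ is the set of reals that, for almost all $n$, fail to realize \emph{all} candidates in $S(n)$. With your coding this can be all of $\ttow$. Every candidate is realized directly above a string of length at most $n$, so the candidates $\langle 0^{n+1}:|\tau|\le n\rangle$ and $\langle 1^{n+1}:|\tau|\le n\rangle$ both occupy position $n$, and no real realizes both. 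A working version must code meagre sets so that the candidates in $S(n)$ can be realized in pairwise disjoint places. That requires controlling how far out those places lie, and it is a substantial part of the proof rather than a formality.
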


We can thus observe that all the relations in Cicho\'n's diagram hold in this mass problem analogy.

\begin{figure}[H]
    \centering
    \begin{tikzpicture}[node distance = 1em and 1em]
        \node (addM) {$\opn{Add}(\mathcal M)$};
        \node (covM) [right = of addM] {$\opn{Cov}(\mathcal M)$};
        \node (b) [above = of addM] {$\mathcal B$};
        \node (d) [above = of covM] {$\mathcal D$};
        \node (nonM) [above = of b] {$\opn{Non}(\mathcal M)$};
        \node (cofM) [above = of d] {$\opn{Cof}(\mathcal M)$};
        \node (addMC) [left = of addM] {$\opn{Add}(\mathcal M_C)$};
        \node (addN) [left = of addMC] {$\opn{Add}(\mathcal N)$};
        \node (nonMC) [left = of nonM] {$\opn{Non}(\mathcal M_C)\,$};
        \node (covN) [left = of nonMC] {$\opn{Cov}(\mathcal N)\,$};
        \node (covMC) [right = of covM] {$\opn{Cov}(\mathcal M_C)$};
        \node (nonN) [right = of covMC] {$\opn{Non}(\mathcal N)$};
        \node (cofMC) [right = of cofM] {$\,\opn{Cof}(\mathcal M_C)$};
        \node (cofN) [right = of cofMC] {$\opn{Cof}(\mathcal N)\;$};

        \draw[double] (addMC) -- (addM); \draw[double] (covMC) -- (covM); \draw[double] (cofMC) -- (cofM); \draw[->] (nonMC) -- (nonM);
        \draw[->] (addN) -- (addMC); \draw[->] (addN) -- (covN); \draw[->] (addM) -- (covM); \draw[->] (addM) -- (b);
        \draw[->] (b) -- (nonM); \draw[->] (covM) -- (d); \draw[->] (covN) -- (nonMC); \draw[->] (d) -- (cofM);
        \draw[->] (b) -- (d); \draw[->] (nonM) -- (cofM); \draw[->] (covMC) -- (nonN);
        \draw[->] (nonN) -- (cofN); \draw[->] (cofMC) -- (cofN);
    \end{tikzpicture}
    \caption{Cicho\'n's diagram for mass problems. Arrows indicate Medvedev reductions. Double lines indicate Medvedev equivalence.}
    \label{cichondiagrammp}
\end{figure}

\subsection{Relationship with the forcing analogy}

Showing there is no reduction between two of these mass problems is difficult.
In the previous section we used the fact that there is a nonlow hyperimmune-free degree to show that $\mathcal T\ngeq_s\mathcal B$.
There may be hope to use a similar non-reduction between well-studied classes of oracles to show other non-reductions between 
mass problem analogues of cardinal characteristics.

The first step towards this is to find lower bounds for the complexity of several of our mass problems.
The forcing analogy looked at by Rupprecht \cite{Rup:2010-1}, Brendle et al.\ \cite{BBNN:2015}, and in \cite{GKT:2019}
is a natural first place to look.

\begin{definition}
    Members of $\high(\opn{Supset}(\mathcal M))$ are referred to as \emph{meagre engulfing}.
    That is, meagre engulfing oracles are those that compute a code for a meagre set which contains all computable meagre sets.
\end{definition}

\begin{definition}
    Members of $\high(\opn{Capture}(\mathcal M))$ are referred to as \emph{weakly meagre engulfing}.
    That is, weakly meagre engulfing oracles are that which compute a code for a meagre set which contains all computable reals.
\end{definition}

\begin{remark}
    \label{wk1genmeagre}
    Members of $\high(\opn{Pass}(\mathcal M))$ are exactly the oracles that compute a weakly 1-generic set.
    Members of $\high(\opn{Spill}(\mathcal M))$ are exactly the oracles that are not low for weak 1-genericity.
    \qed
\end{remark}

For this case with meagre sets, we do indeed get lower bounds from these definitions.
We show that for each Weihrauch problem $A$ that in \cite{GKT:2019} are associated with the meagre ideal we get $\high(A)\le_s\massp(A)$:

\begin{proposition}
    \label{meagreforcingmp}
    The following Medvedev relations hold:
    \begin{enumerate}
        \item[a)] $\high(\opn{Supset}(\mathcal M))\le_s\opn{Cof}(\mathcal M)$. 
            That is, every cofinal sequence in ${\langle\mathcal M\cap\Delta^0_1,\subseteq\rangle}$ is meagre engulfing.
        \item[b)] $\high(\opn{Capture}(\mathcal M))\le_s\opn{Cov}(\mathcal M)$. 
            That is, every unbounded sequence in ${\langle\mathcal M\cap\Delta^0_1,\subseteq\rangle}$ is weakly meagre engulfing.
        \item[c)] $\high(\opn{Spill}(\mathcal M))\le_s\opn{Add}(\mathcal M)$. 
            That is, no sequence of computable codes for meagre sets with the property that every computable real 
            is an element of some member is low for weak 1-genericity.
        \item[d)] $\high(\opn{Pass}(\mathcal M))\le_s\opn{Non}(\mathcal M)$. 
            That is, every sequence of computable reals with the property that no computable meagre set 
            contains the whole sequence computes a weakly 1-generic set.
    \end{enumerate}
\end{proposition}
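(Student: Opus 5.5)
We reduce all four parts to a single uniform observation: for any Weihrauch problem $A$ in which the solution space carries a computable ``countable union/aggregation'' operation compatible with the relation $A$, a complete solution set for computable instances can be uniformly amalgamated into a single solution, which then witnesses membership in $\high(A)$. For the meagre ideal this aggregation is available: given a sequence $\langle M_e\rangle_{e\in\omega}$ of codes $M_e=\langle T^e_n\rangle_{n\in\omega}$, the sequence $\langle T^e_n\rangle_{\langle e,n\rangle\in\omega}$ is, uniformly in the sequence, a code for $\bigcup_eM_e$, which is again a $\mathbf\Sigma^0_2$ meagre set. This union code is the Turing functional we use throughout.

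For a) and b) the argument is then immediate. If $\langle M_e\rangle\in\opn{Cof}(\mathcal M)$, then every computable meagre set $\tilde M$ satisfies $\tilde M\subseteq M_e\subseteq\bigcup_eM_e$ for some $e$. So the union code is computed from the sequence and contains all computable meagre sets, i.e.\ it witnesses meagre engulfing. Likewise, if $\langle M_e\rangle\in\opn{Cov}(\mathcal M)$, every computable $f$ lies in some $M_e$ and hence in the union, giving weak meagre engulfing. In both cases the functional is the fixed union functional, so the reductions are Medvedev.

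For d) we use Remark~\ref{wk1genmeagre}: it suffices to compute, from $\langle f_e\rangle\in\opn{Non}(\mathcal M)$, a single real outside every computable meagre set. We would interleave the sequence into one real $g$, for instance $g(\langle e,x\rangle)=f_e(x)$. We then need a computable map taking a meagre code $\tilde M$ to a meagre code $\tilde M'$ such that $g\in\tilde M$ forces every $f_e\in\tilde M'$. The naive choice fails, since the columns of a non-meagre-avoiding $g$ need not lie in one meagre set. The remedy is to take instead $\tilde M'=\{f:\exists g\,[g\in\tilde M$ with some column equal to $f]\}$, which is not obviously meagre. So we invert the strategy. Given a computable nowhere dense tree $T$, we build $g$ by finite extension, at each stage extending through the nowhere dense trees of all computable codes simultaneously. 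This requires a single real meeting all the dense sets $D_{\tilde M,n}$.

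Concretely for d), the plan is to construct $g\le_T\langle f_e\rangle$ stagewise. At stage $s$ we consider the $s$-th computable code $\tilde M=\langle T_n\rangle$ together with its level $n$. We form the computable meagre set $C$ consisting of reals $f$ for which $\sigma\hatc f$ never leaves $T_n$ for any current initial segment $\sigma$ of $g$; this is a computable nowhere dense set, uniformly in $\sigma$ and the index. Since the sequence is in $\opn{Non}(\mathcal M)$, some $f_e\notin C$, so a finite piece of $f_e$ escapes $T_n$ and we append it. The main obstacle is that the escaping $e$ is found only by search, and we must be sure a witness is found for each partial computable index; we handle this by searching over $e$ and over finite lengths simultaneously, with the index dovetailed so that non-total or non-nowhere-dense codes are simply skipped by only acting when a witness appears. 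This yields a weakly 1-generic $g$ uniformly. For c), the dual statement, we argue the same way relativised: an oracle computing an element of $\opn{Add}(\mathcal M)$ computes, by the union functional and Remark~\ref{wk1genmeagre}, a meagre set not contained in any computable one, so it is not low for weak 1-genericity.
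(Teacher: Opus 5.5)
Parts a)--c) are correct and are exactly the paper's argument. The union code $\langle T^e_n\rangle_{e,n}$ is meagre engulfing, weakly meagre engulfing, or, for $\opn{Add}(\mathcal M)$, a meagre set contained in no computable meagre set, since $\bigcup_eM_e\subseteq\tilde M$ would give $M_e\subseteq\tilde M$ for every $e$.

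The gap is in d). The only property of $\langle f_e\rangle$ your verification uses is this: for the current $\sigma$ and a genuine computable nowhere dense tree $T_n$, some $f_e$ lies outside the closed nowhere dense set $\{f:\sigma\hatc f\in[T_n]\}$. That step is true but carries no information. It holds for every sequence dense in $\wtow$, e.g.\ a computable list of all eventually-zero functions. Moreover, an escaping extension of $\sigma$ can be found by computable search over all strings anyway, since $T_n$ is nowhere dense. Your procedure is uniform, so if the verification went through as written it would turn that computable list into a computable weakly 1-generic real. No such real exists: a computable $b$ lies in the computable closed nowhere dense set $\{b\}$. So the whole difficulty sits in the sentence about dovetailing, and it is not resolved there. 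If you wait for a witness, you stall forever on non-total or non-nowhere-dense indices. If you move on, $\sigma$ keeps growing, and nothing guarantees that for a valid index the search relative to the then-current $\sigma$ ever finishes within the time you allot it.

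What is missing is a time bound that escapes every computable function, and this is where $\langle f_e\rangle\in\opn{Non}(\mathcal M)$ really matters. For computable $h$, the set $\{f:f\le^*h\}$ is a computable meagre set (Proposition~\ref{CapMtoDom}), so some $f_e\not\le^*h$. Hence the sequence is $\le^*$-unbounded, and $p(x)=\max_{e<x}f_e(x)$ is hyperimmune (Remark~\ref{hypimm<bd}). Kurtz's theorem, that every hyperimmune degree computes a weakly 1-generic set, then finishes the job. Its proof is a finite-extension construction whose searches are bounded by such a $p$, handling priority and interruption carefully. This is the paper's proof, via $\hypimm\le_s\bd\le_s\opn{Non}(\mathcal M)$ and the coincidence of hyperimmune and weakly 1-generic degrees. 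You also need not worry about uniformity. $\high(\opn{Pass}(\mathcal M))$ is closed upwards under $\le_T$, so once every member of $\opn{Non}(\mathcal M)$ computes a weakly 1-generic, the identity functional witnesses the Medvedev reduction.
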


\begin{proof}
    Given a sequence of codes for meagre sets $\langle M_e\rangle_{e\in\omega}=\langle\langle T_{e,k}\rangle_{k\in\omega}\rangle_{e\in\omega}$,
    note that the union $\bigcup_{e\in\omega}M_e$ is coded by $\langle T_{e,k}\rangle_{e,k\in\omega}$.
    This reduction suffices for (a), (b), and (c):

    For (a), if $\langle M_e\rangle\in\opn{Cof}(\mathcal M)$ then each computable meagre set is contained in some $M_e$ and thus in their union.
    For (b), if $\langle M_e\rangle\in\opn{Cov}(\mathcal M)$ then each computable real is contained in some $M_e$ and thus in their union.
    For (c), if $\langle M_e\rangle\in\opn{Add}(\mathcal M)$ then no computable meagre set contains every $M_e$, 
    hence no computable meagre set contains their union.

    For (d), recall that the weakly 1-generic degrees are precisely the hyperimmune degrees.
    We know from Proposition \ref{hypimm<bd} and Proposition \ref{CapMtoDom} that $\hypimm\le_s\mathcal B\le_s\opn{Non}(\mathcal M)$.
\end{proof}

\begin{definition}
    Members of $\high(\opn{Supset}(\mathcal N))$ are referred to as \emph{null engulfing}.
    That is, null engulfing oracles are those that compute a code for a null set that contains all computable null sets.
    The term \emph{Schnorr engulfing} has also been used.
\end{definition}

\begin{definition}
    Members of $\high(\opn{Capture}(\mathcal N))$ are referred to as \emph{weakly null engulfing}.
    That is, weakly null engulfing oracles are those that compute a code for a null set that contains all computable reals.
    The term \emph{weakly Schnorr engulfing} has also been used.
\end{definition}

\begin{remark}
    \label{schnorrrandomnull}
    Members of $\high(\opn{Pass}(\mathcal N))$ are exactly the oracles that compute a Schnorr random set.
    Members of $\high(\opn{Spill}(\mathcal N))$ are exactly the oracles that are not low for Schnorr randomness.
    \qed
\end{remark}

Obtaining an analogous result to Proposition \ref{meagreforcingmp} would not be simple.
Unlike the case for meagre sets, given a sequence of codes for null sets, it is not straightforward to obtain a code for their union, 
or even for a null set containing their union.
We also cannot rely on a collapse in relations between highness classes, as in the case with hyperimmune and weakly 1-generic degrees
to show a sequence of functions not contained in any computable null set would compute a Schnorr random.

    \newpage
    \section{A $\Pi^0_1$-class which is MED}
\label{schrittesser}

Aside from the question of their cardinality, MAD families have been of interest to set theory for a long time.
Their existence, assuming the axiom of choice, is an easy application of Zorn's lemma, 
so natural questions arise concerning the situation without choice and about the definability of MAD families.

Mathias \cite{Mat:1977} in 1977 showed that the existence of a Mahlo cardinal suffices to construct a model of 
ZF+DC+``There do not exist any MAD families''.
He asked how far the consistency strength could be lowered, and no progress on the problem was made for a long time.
T\"ornquist \cite{Toe:2018} in 2015 showed that Solovay's model has the desired properties and hence an inaccessible cardinal suffices.
This question was answered definitively by Horowitz and Shelah \cite{HS:2019}, who show that no large cardinal assumption is required;
that ZF+DC+``There do not exist any MAD families'' is equiconsistent with ZFC.

Mathias \cite{Mat:1977} also showed that there are no analytic MAD families.
In some sense, this is the best result possible, Miller and Kunen \cite{Mil:1989} showed that assuming $V=L$, there is a coanalytic MAD family.
T\"ornquist \cite{Toe:2009} later gave a simple proof that the existence of a $\mathbf\Sigma^1_2$ MAD family 
implies the existence of a coanalytic ($\mathbf\Pi^1_1$) MAD family, more easily proving the result of Miller and Kunen.

MAD families in other spaces have been considered, notably MAD families in $\wtow$.
Much of this thesis will concern such families, 
but previously a stronger notion has been more studied and is more closely related to MAD families of sets of natural numbers.
A \emph{van Douwen MAD family} is a subset of $\wtow$ where no two elements agree infinitely often, 
and every partial function $h:\;\subseteq\!\!\omega\to\omega$ agrees infinitely often with some member.
Replacing partial functions with total functions in the definition yields the notion of \emph{maximal eventually different (MED) families}.
The question of the existence of van Douwen MAD families was a longstanding problem.
Zhang \cite{Zha:1999-1} showed that they exist assuming Martin's Axiom.
Raghavan \cite{Rag:2010} showed that their existence is provable in ZFC.

Kastermans et al.\ \cite{KSZ:2008} investigate a different, stronger version of maximal eventually different families,
showing they cannot be analytic.
They ask whether the usual, weaker condition can be analytic.
This was answered positively by Horowitz and Shelah \cite{HS:2024} who construct a Borel MED family.
Schrittesser \cite{Sch:2017} improved this to the best possible result, proving the following:

\begin{theorem}
    \label{pi01med}
    There is a MED family that is effectively closed.
\end{theorem}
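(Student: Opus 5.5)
The plan is to build a computable tree $T\subseteq{}^{<\omega}\omega$ and take the family to be $[T]$, which is then effectively closed. The branches of $T$ will carry enough self-coding that any two distinct branches agree only finitely often. The tree will also have enough freedom that every $h\in\wtow$ agrees infinitely often with some branch. So each member of the family is split into two interleaved parts. On \emph{coding coordinates}, $f(n)$ is required to be (a canonical code of) a finite string determined by $f\restriction n$, so agreement there forces agreement on everything below. On \emph{free coordinates}, whose positions are themselves determined by the history $f\restriction n$, $f$ may copy an arbitrary value. Both conditions are local: each is checked by a computable test on finite initial segments. Hence $T$ is computable and $[T]$ is $\Pi^0_1$.

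The steps, in order, are as follows. First, fix a computable pairing $\langle\cdot,\cdot\rangle$ and a computable coding of finite sequences. Define, by recursion on $\sigma\in T$, the set of free positions above $\sigma$ and the admissible values at the next coordinate. At a coding coordinate the only admissible value is $\ulcorner\sigma\urcorner$. At a free coordinate, any value $v$ is admissible provided that the \emph{next} coding coordinate records the pair of the position and $v$. Second, prove the ED property by cases on where two branches $f\neq g$ agree. If they agree at infinitely many coding coordinates, then $f\restriction n=g\restriction n$ for infinitely many $n$, so $f=g$. Otherwise they agree at infinitely many free coordinates while differing from some point on. This case must be excluded by an additional admissibility rule for free values. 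The rule I would try first is a priority condition: the value at a free coordinate must be larger than every code appearing on any string of $T$ that is lexicographically earlier than $\sigma$ and of length at most $|\sigma|$. This is still a decidable condition on $\sigma$. With it, two incompatible branches can share only boundedly many free values beyond their splitting point. Third, prove maximality. Given any $h$ (not necessarily computable), build a branch $f$ by induction. At each free coordinate $k$ that is reached, wait for a later free coordinate $k'$ at which $h(k')$ is admissible. Such a $k'$ must exist, because admissibility excludes only finitely many values at each position, and free positions will be made frequent enough (I would try spacing that increases only along the coded history) so that $h$ cannot avoid all of them. Set $f(k')=h(k')$, and fill in the coding coordinates as forced.

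The main obstacle is the tension between the second and third steps. The rule on free values must be strong enough to stop two distinct branches from colliding infinitely often on free coordinates. At the same time it must be weak enough that an arbitrary $h$ meets some admissible free value infinitely often, and it must stay decidable so that $T$ is computable. I expect the priority-style restriction to do the work, since it allows only finitely many exclusions per position. The delicate part is verifying that these exclusions do not, in total, let some $h$ avoid every admissible free value along the branch being built. If that fails, the fallback is to let the positions of the free coordinates depend on the values $h$ has already supplied, so that the branch being constructed for $h$ can always postpone to a position where $h$ escapes the finitely many excluded values.
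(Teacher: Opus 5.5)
Your admissibility rule on free values breaks maximality. Read literally, it is not even finitary: in ${}^{<\omega}\omega$ a string can have infinitely many lexicographic predecessors of bounded length (every $\langle 0,k\rangle$ precedes $\langle 1,0\rangle$). Suppose you switch to the order of codes, so that only finitely many values are excluded at each node. The excluded values at a free coordinate $n$ of a branch $f$ still form an initial segment of $\omega$, and its length tends to infinity along $f$. This is because every fixed string of $T$ eventually has code below $\ulcorner f\restriction n\urcorner$, and $T$ contains arbitrarily large codes. So the constant function $0$, or any bounded function, meets each branch at free coordinates only finitely often. It meets a branch at coding coordinates at most once, because codes are injective. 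Hence it is eventually different from every member of $[T]$. ``Finitely many exclusions per position'' is the wrong measure. Relocating the free positions, as in your fallback, cannot help either, because the obstruction is on values. Maximality forbids any lower bound on copied values, so collisions between copied values of distinct members must be prevented through \emph{positions}. In the paper, the copying set of $\hat g_{(c)}$ is $B(g,c)\subseteq\{2\cdot\#(g\restriction n):c(n)=1\}$, and these sets are pairwise almost disjoint over all $g$ and all self-refining $c$.

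The deeper gap is in your case split for eventual difference. Since free positions depend on the history, a position $n$ may be free on $f$ but coding on $f'$, and agreement there means $f(n)=\ulcorner f'\restriction n\urcorner$. Neither of your two cases covers this. It is a genuine obstruction, not a technicality. If $h(n)=\ulcorner f'\restriction n\urcorner$ for all $n$, then $h$ is already caught by $f'$. Yet any other branch copying $h$ at positions that are coding positions of $f'$ collides with $f'$ infinitely often. Whether a target is caught in this way is not something a computable tree can test on initial segments.

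Handling this is the core of Schrittesser's construction, and it is missing from your plan. The members $\hat g_{(c)}$ record the target $g$ itself, not just the values copied so far, together with a self-refining $c$. The member $\hat g_{(c)}$ copies $g$ at $n\in B(g,c)$ only while $B(g,c)\restriction n$ is still a $\prec^g$-antichain. This is a closed condition. It guarantees that copied values match the coding values $h_{(d)}(n)\approx(h\restriction n,d\restriction n)$ of any other member only finitely often, since such matches form a $\prec^g$-chain.

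Maximality then comes from the dichotomy of Lemma~\ref{2.7}. Either $g$ has an infinite $\prec^g$-chain $I$: then some $h_{(d)}$ already agrees with $g$ on $I$, and $I\cap B(h,d)$ is finite, so $\hat h_{(d)}=^\infty g$. Or some self-refining $c$ with infinitely many $1$s makes $B(g,c)$ a $\prec^g$-antichain: then the copying clause never switches off, and $\hat g_{(c)}=^\infty g$. Without this dichotomy or a substitute for it, your construction has no way to decide when copying is safe.
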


We present Schrittesser's \cite{Sch:2018} updated construction with a view to making the argument more accessible and filling in some details.
We then build on this result and show that the computable members of this $\Pi^0_1$ MED family are in fact MED for the computable functions.

\vspace*{1em}

\noindent It is easy to create a large family of eventually different functions by taking every function and incorporating redundancy:
Identifying finite strings with natural numbers, for each function $g\in\wtow$, we consider the map $n\mapsto g\restriction n$.
The family of these maps is eventually different, but it is certainly not maximal, and it is difficult to make changes to create a maximal family.
By also adding a second parameter, we will make multiple attempts for each $g$ to be infinitely often equal to a member of our family.
We encode $g\in\wtow$ and $c\in\ttow$ together as a single function $\alt gc$.

Eventually, in order to make these functions MED, we will make changes to each $\alt gc$ 
so that the altered function $\smash{\alth gc}$ is infinitely equal often to $g$ if some conditions are met.
We will either construct $h,d$ such that $\smash{\alth hd}=^\infty g$ 
or show that there must be some binary sequence $c$ meeting some conditions which imply $\alth gc=^\infty g$.

The key property is that the inputs where $c$ takes the value $1$ will impose a constraint on which greater inputs can take the value $1$.
We say that $c$ is \emph{self-refining}.
In particular, for all such $c$, the family of their supports will be almost disjoint.
From $g$ and the support of a self-refining $c$, we build a set $B(g,c)$ on which we will make the changes to $\alth gc$.

\subsection{Schrittesser's construction}

Much of the notation in this section has been changed from the original text, so as to be easier to read.
Throughout this proof, the sets $\omega$, ${}^{<\omega}2$, ${}^{<\omega}\omega$, 
and $\cup_{\ell\in\omega}({}^\ell2\times{}^\ell\omega)$ will be identified via fixed computable bijections.
To remind the reader of this, $\approx$ will often be used in place of $=$ where two types have been identified, 
and $\#$ may be used to directly denote the coding function.

When discussing the binary representation of a number, we use the reverse convention for place value, for example: 
$001_b$ is the binary representation for four, and $0101_b$ is the binary representation for ten.
Let $\bin(n)$ be the binary representation of $n$ as a string following this convention.

We use the following definitions when working with binary strings and sequences in the construction of the MED family:

\begin{definition}
    \label{selfref}
    For each $\sigma\in{}^{<\omega}2$, define
        $$I_{\sigma}=\{n:\sigma\prec\bin(n)\}.$$

    We say that $\sigma$ is \emph{self-refining} if when 
    we have elements $r<s$ such that $\sigma(r)=\sigma(s)=1$ then $s\in I_{\sigma\restriction r+1}$.

    A sequence $c\in\ttow$ is \emph{self-refining} if each initial segment $c\restriction n$ is self-refining.
\end{definition}

As an example, $I_{10}=\{10_b,101_b,1001_b,1011_b,\cdots\}=\{1,5,9,13,\cdots\}$, 
and ${\sigma=100001}$ is self-refining as $\sigma^{-1}[1]=\{0,5\}$ and $5=101_b\in I_{10}$.

A key property of self-refining sequences is that the collection of their supports is AD.
This will be used to create another family of AD sets, which will be essential in ensuring maximality of the final ED family
without making any two functions infinitely often equal.

\begin{claim}
    The family $\{c^{-1}[1]:c\text{ self-refining}\,\}$ is AD.
\end{claim}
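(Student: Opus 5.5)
The plan is a direct argument. For two distinct self-refining sequences $c\neq d$, I will show that every common element of $c^{-1}[1]$ and $d^{-1}[1]$ lies below a fixed bound. The bound depends only on the first point where $c$ and $d$ disagree.

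The key observation comes straight from Definition \ref{selfref}. Suppose $c$ is self-refining, $c(r)=1$, and $s>r$ with $c(s)=1$. Applying self-refinement to the initial segment $c\restriction(s+1)$ gives $s\in I_{c\restriction r+1}$, that is, $c\restriction(r+1)\prec\bin(s)$. So every element of the support of $c$ beyond $r$ has a binary representation that starts with $c\restriction(r+1)$. In particular, for any $m\le r$ we get $\bin(s)(m)=c(m)$.

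Now let $m$ be least with $c(m)\neq d(m)$. Suppose, towards a contradiction, that $c^{-1}[1]\cap d^{-1}[1]$ is infinite. Then both supports are infinite. So we can fix $r\ge m$ with $c(r)=1$ and $r'\ge m$ with $d(r')=1$. Next choose $s$ in the intersection with $s>\max(r,r')$. Applying the observation to $c$ at $r$ gives $\bin(s)(m)=c(m)$. Applying it to $d$ at $r'$ gives $\bin(s)(m)=d(m)$. Together these contradict $c(m)\neq d(m)$.

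In fact the argument shows more. If both supports contain elements at or beyond $m$, then the intersection is contained in $[0,\max(r,r')]$. If instead one support is entirely below $m$, then that support is itself finite. Either way the intersection is finite, which is what the claim requires.

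I expect no serious obstacle. The only point needing care is the direction of the prefix relation in $I_\sigma$: membership $s\in I_{\sigma}$ means $\sigma\prec\bin(s)$. Hence $\bin(s)$ has length greater than $r$, so $\bin(s)(m)$ is defined and agrees with $\sigma(m)$. The reversed place-value convention for $\bin$ plays no role beyond this prefix condition.
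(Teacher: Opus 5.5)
Your proof is correct and is essentially the paper's argument. The paper observes that $I_\tau\subseteq I_\sigma$ whenever $\sigma\preceq\tau$, and that distinct strings of equal length have disjoint $I$-sets, so $c^{-1}[1]\subseteq^* I_{c\restriction n}$ and $d^{-1}[1]\subseteq^* I_{d\restriction n}$ are almost contained in disjoint sets. Your comparison of bit $m$ of $\bin(s)$ is that disjointness unpacked pointwise, and your explicit treatment of the finite-support case spells out what the paper's use of $\subseteq^*$ leaves implicit.
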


\begin{proof}
    Observe that if $\sigma\preceq\tau$ then $I_\tau\subseteq I_\sigma$, 
    and that if $\sigma$ and $\tau$ have the same length but are not equal then $I_\sigma\cap I_\tau=\emptyset$.

    If $c\neq d$ then there is $n$ such that $c\restriction n\neq d\restriction n$ and so  
    $c^{-1}[1]\subseteq^*I_{c\restriction n}$ and $d^{-1}[1]\subseteq^*I_{d\restriction n}$ which are disjoint.
\end{proof}

Now we can work on constructing the MED family.

\begin{definition}
    \label{altgc}
    For every pair of functions $g\in\wtow$ and $c\in\ttow$, define $\alt gc:\omega\to\omega$ by
        $$\alt gc(n)\approx(g\restriction n,c\restriction n)$$
    where we identify $\bigcup_\ell({}^\ell\omega\times{}^\ell2)$ with $\omega$.
    Similarly, if $\sigma\in{}^{<\omega}2$ then let $\alt g\sigma$ be the partial function defined as above on $n<|\sigma|$.
\end{definition}

While this family is ED, it is far from maximal.
For each $g$ that is almost everywhere different from each $\alt hd$, we will change some $\alt gc$ to be equal in value to $g$ on an infinite set.

First, we want some condition that implies $g$ is infinitely often equal to some $\alt hd$.
Consider the following relation $\prec^g$,
a restriction of the usual ordering on $\omega$ to where the strings identified with the images under $g$ are compatible.

\begin{definition}
    \label{precg}
    Let $g\in\wtow$, and $n_0,n_1\in\omega$.
    Define $(\eta_i,\tau_i)\approx g(n_i)$ for each $i\in\{0,1\}$.
    Define $n_0\prec^gn_1$ if:
    \begin{itemize}
        \item $n_0<n_1$,
        \item $|\eta_i|=|\tau_i|=n_i$ for each $i\in\{0,1\}$,
        \item $\eta_0\prec\eta_1$ and $\tau_0\prec\tau_1$.
    \end{itemize}
\end{definition}

If an infinite set $I\subseteq\omega$ is linearly ordered by $\prec^g$ 
then the $\eta,\tau$ associated with $g(n)$ for $n\in I$ are compatible and their union gives a pair $h,d$.
Such $h,d$ would satisfy $\alt hd\restriction I=g\restriction I$ as for each $n\in I$ there are $\eta,\tau$ such that 
    $$\alt hd(n)\approx(h\restriction n,d\restriction n)=(\eta,\tau)\approx g(n).$$

If there is no such $I$, then we want to alter some $\alt gc$ to be infinitely often equal to $g$.
To avoid conflicts we would only want $\alt gc$ to be equal on a thin set, $B(g,c)$:

\begin{definition}
    \label{Bgc}
    For every pair of functions $g\in\wtow$ and $c\in\ttow$, define
        $$B(g,c)=\{2\cdot\#(g\restriction n):c(n)=1\}\smallsetminus\{n:g(n)=\alt gc(n)\}.$$
\end{definition}

We use that the supports of self-refining sequences are AD to show that the sets $B(g,c)$ inherit this property;
they can thus be used to avoid introducing infinitely often equal functions in our ED family.

\begin{claim}
    The family $\{B(g,c):c\text{ self-refining}\,\}$ is AD.
\end{claim}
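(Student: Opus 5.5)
The plan is to reduce the claim to the previous claim, that $\{c^{-1}[1]:c\text{ self-refining}\}$ is AD, by means of the coding map $n\mapsto 2\cdot\#(g\restriction n)$. I will read the statement as saying that $B(g,c)\cap B(h,d)$ is finite whenever $(g,c)\neq(h,d)$ and $c,d$ are self-refining. The fixed-$g$ version is the special case $g=h$. The first step discards the subtracted set $\{n:g(n)=\alt gc(n)\}$, which can only shrink $B(g,c)$:
    $$B(g,c)\subseteq\{2\cdot\#(g\restriction n):c(n)=1\}.$$
So it suffices to bound the intersection of these larger sets.

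Next I split into two cases according to whether $g=h$.

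If $g=h$ and $c\neq d$, the map $n\mapsto 2\cdot\#(g\restriction n)$ is injective. This is because $g\restriction n$ has length $n$, and $\#$ is a bijection. A common element $2\cdot\#(g\restriction n)=2\cdot\#(g\restriction m)$ with $c(n)=1$ and $d(m)=1$ therefore forces $n=m$, so that $n\in c^{-1}[1]\cap d^{-1}[1]$. By the previous claim this intersection is finite, since $c$ and $d$ are distinct self-refining sequences. Hence $B(g,c)\cap B(g,d)$ is finite.

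If $g\neq h$, I do not need the sequences $c,d$ at all. Suppose $2\cdot\#(g\restriction n)=2\cdot\#(h\restriction m)$. Since $\#$ is a bijection, $g\restriction n=h\restriction m$ as strings. Comparing lengths gives $n=m$, so $g\restriction n=h\restriction n$. Let $N$ be the least point with $g(N)\neq h(N)$. Then this agreement is only possible for $n\le N$, so the intersection has at most $N+1$ elements.

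I do not expect a real obstacle. The only point needing care is the convention that $\#$ is a fixed bijection between ${}^{<\omega}\omega$ and $\omega$, so that equal codes mean equal strings and hence equal lengths. Everything else follows directly from the previous claim.
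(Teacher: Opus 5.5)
Your proof is correct and follows the paper's argument: both use that $B(g,c)\subseteq\{2\cdot\#(g\restriction n):c(n)=1\}$, handle $c\neq d$ by injectivity of $n\mapsto 2\cdot\#(g\restriction n)$ together with the previous claim, and handle $g\neq h$ by noting that $g$ and $h$ share only finitely many initial segments. Restricting the first case explicitly to $g=h$ is, if anything, slightly more careful than the paper's wording, which states that case for arbitrary $g,h$.
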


\begin{proof}
    Recall that ${}^{<\omega}\omega$ and $\omega$ are identified via a computable bijection $\#$.
    Note that for any $g$, the map $n\mapsto 2\cdot\#(g\restriction n)$ is injective.
    As $B(g,c)$ is a subset of the image of $c^{-1}[1]$ under that map, if $c$ and $d$ differ then $B(g,c)$ and $B(h,d)$ will be AD.
    If $g$ and $h$ differ, then the sets $\{2\cdot\#(g\restriction n):n\in\omega\}$ and $\{2\cdot\#(h\restriction n):n\in\omega\}$ are AD, and so
    $B(g,c)$ and $B(h,d)$ will be AD.
\end{proof}

To simplify the notation and break up the combinatorics in the maximality proof below,
it is helpful to define $I^g_\sigma$ to be the image of $I_\sigma$ under the above map:

\begin{definition}
    \label{Igs}
    For $g\in\wtow$ and $\sigma\in{}^{<\omega}2$, define:
        $$I^g_\sigma=\{2\cdot\#(g\restriction n):n\in I_\sigma\}.$$
\end{definition}

\begin{remark}
    Fix $\ell\in\omega$.
    For any $g,h\in\wtow$ and $\sigma,\tau\in{}^\ell2$, the intersection $I^g_\sigma\cap I^h_\tau$ is finite.
\end{remark}

\begin{proof}
    Let $\langle g,\sigma\rangle$ and $\langle h,\tau\rangle$ be distinct.
    If $g\neq h$ then for all but finitely many $n$ the initial segments $g\restriction n$ and $h\restriction n$ will be distinct.
    Thus, as $\eta\mapsto2\cdot\#(\eta)$ is injective, $I^g_\sigma\cap I^h_\tau$ is finite.
    If $g=h$, then $\sigma\neq\tau$ so $I_\sigma\cap I_\tau=\emptyset$.
    Thus, as $n\mapsto2\cdot\#(g\restriction n)$ is injective, $I^g_\sigma\cap I^h_\tau$ is finite.
\end{proof}

Now everything is in place to define our MED family:

\begin{definition}
    \label{althgc}
    For every pair of functions $g\in\wtow$ and $c\in\ttow$, define $\alth gc:\omega\to\omega$ by
        $$\alth gc(n)=\begin{cases}
            g(n)&n\in B(g,c)\text{, }B(g,c)\restriction n\text{ is a}\prec^g\!\!\text{-antichain, and }c\restriction n\text{ is self-refining,}\\
            \alt gc(n)&\text{otherwise.}
        \end{cases}$$
    Let $\mathcal E=\{\alth gc:g\in\wtow,c\in\ttow\}$. 
\end{definition}

\setlength{\tabcolsep}{0.8em} {\renewcommand{\arraystretch}{1.4}
\begin{figure}[h]
    \centering
    \begin{tabular}{|l|l|}
        \hline
        $I_\sigma$          & Def. \ref{selfref} \\ \hline
        self-refining       & Def. \ref{selfref} \\ \hline
        $\alt gc$           & Def. \ref{altgc} \\ \hline
        $\prec^g$           & Def. \ref{precg} \\ \hline
        $B(g,c)$            & Def. \ref{Bgc} \\ \hline
        $I^g_\sigma$        & Def. \ref{Igs} \\ \hline
        $\alth gc$          & Def. \ref{althgc} \\ \hline
    \end{tabular}
    \caption{References for notation and terminology used in this proof.}
\end{figure}

For each $g\in\wtow$ we want either some pair $h,d$ such that $\alt hd=^\infty g$ and we preserve this in $\alth hd$,
or some $c$ such that $\alth gc=^\infty g$ due to the changes made on inputs in $B(g,c)$.
The following lemma shows us that this construction ensures that one of these will occur, and thus maximality of $\mathcal E$ will follow.

\begin{lemma}
    \label{2.7}
    For any $g\in\wtow$, one of the following holds:
    \begin{enumerate}
        \item There is an infinite set $I$ which is linearly ordered by $\prec^g$.
            The definition of $\prec^g$ thus gives functions $h\in\wtow$ and $d\in\ttow$ with $g\restriction I=\alt hd\restriction I$.
            We get that $I\cap B(h,d)$ is finite.
        \item There is a self-refining function $c$ which takes the value $1$ infinitely often 
            and it is such that $B(g,c)$ is a $\prec^g$-antichain. 
    \end{enumerate}

\end{lemma}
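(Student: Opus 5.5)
The plan is to prove the dichotomy by assuming clause (1) fails and building $c$ for clause (2). The extra assertion in (1) about $I\cap B(h,d)$ is handled separately as a direct calculation.

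\emph{Structure of $\prec^g$.} First I will record that $\prec^g$ is a forest order. If $n_0\prec^g n$ and $n_0'\prec^g n$, then the strings coded by $g(n_0)$ and $g(n_0')$ are the restrictions of the pair $(\eta,\tau)\approx g(n)$ to lengths $n_0$ and $n_0'$. Hence $n_0$ and $n_0'$ are $\prec^g$-comparable, so the predecessors of any point form a finite chain. Consequently, if there is no infinite $\prec^g$-chain, Ramsey's theorem for pairs (colouring a pair by comparable or incomparable) shows that every infinite $S\subseteq\omega$ contains an infinite $\prec^g$-antichain.

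\emph{Clause (1).} If an infinite chain $I$ exists, I take $h=\bigcup\{\eta:(\eta,\tau)\approx g(n),\ n\in I\}$ and $d$ the corresponding union of the $\tau$'s, exactly as described before Definition~\ref{Bgc}. Then $g\restriction I=\alt hd\restriction I$. To see that $I\cap B(h,d)$ is finite, I will unwind Definition~\ref{Bgc}. A large $m\in I\cap B(h,d)$ is even, of the form $2\cdot\#(h\restriction n)$ with $n<m$, and satisfies $h(m)\neq\alt hd(m)$. But $m\in I$ means that $g(m)$ codes $(h\restriction m,d\restriction m)$, which is the value $\alt hd(m)$. I expect this to rule out all but finitely many such $m$ by a coding argument, though I have not checked the details and may need to use the exact definition of the pairing $\approx$.

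\emph{Clause (2).} Assume there is no infinite $\prec^g$-chain. I will build $n_0<n_1<\cdots$ and set $c(n)=1$ exactly for $n\in\{n_k\}$. Self-refinement then requires $n_{k+1}\in I_{c\restriction(n_k+1)}$. Since $B(g,c)\subseteq\{2\cdot\#(g\restriction n_k):k\in\omega\}$, it suffices that the points $m_k=2\cdot\#(g\restriction n_k)$ are pairwise $\prec^g$-incomparable; removing the points where $g=\alt gc$ only shrinks the set. The map $n\mapsto 2\cdot\#(g\restriction n)$ is injective, and Definition~\ref{Igs} translates the allowed positions into the sets $I^g_\sigma$. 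I will maintain the invariant that, at stage $k$, the current string $\sigma_k=c\restriction(n_k+1)$ admits an infinite antichain $A_k\subseteq I^g_{\sigma_k}$ all of whose elements are incomparable with $m_0,\dots,m_k$. Given this, I pick $m_{k+1}\in A_k$ and refine $A_k$ to a new infinite antichain inside $I^g_{\sigma_{k+1}}$. The predecessors of $m_0,\dots,m_{k+1}$ form a finite set, which I discard. Elements above some $m_i$ must be avoided. This is where the hypothesis of no infinite chain should be used: the subtree above $m_{k+1}$ is well-founded, so I will choose $m_{k+1}$ to be a point with no $\prec^g$-successors in the relevant set, or pass to a cone in which this holds, using induction on rank.

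\emph{Main obstacle.} The hard step is reconciling the refinement with the antichain. Once $n_{k+1}$ is chosen, the next position is constrained to $I_{c\restriction(n_{k+1}+1)}$, a residue class modulo $2^{n_{k+1}+1}$ determined by the choice itself, and an infinite antichain from the previous stage might meet it only finitely. I would first try a pigeonhole-with-lookahead argument. For each candidate $n\in A_k$, ask whether $A_k\cap I_{\sigma_k{}^\frown0^{\,n-n_k-1}{}^\frown1}$ is infinite. If no candidate works, the antichain $A_k$ would be partitioned into finite pieces indexed by its own elements, and I would try to derive an infinite $\prec^g$-chain from this, contradicting the assumption. If that fails, the fallback is to run the whole construction as a fusion, choosing the $n_k$ along a tree of strings $\sigma$ and applying Ramsey's theorem once to the set of all candidate pairs $(\sigma,n)$ rather than stage by stage.
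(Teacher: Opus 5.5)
The finiteness claim in clause (1) fails, and with it your case split. For an arbitrary infinite $\prec^g$-chain $I$, no coding argument can make $I\cap B(h,d)$ finite. For example:
\begin{enumerate}
\item let $d\equiv 1$, and choose $h$ recursively so that $h(m)\neq\alt hd(m)$ for every $m$ (possible, since $\alt hd(m)$ depends only on $h\restriction m$);
\item put $I=\{2\cdot\#(h\restriction n):n\in\omega\}$;
\item let $g(m)=\alt hd(m)$ for $m\in I$, and for $m\notin I$ let $g(m)$ not code a pair of length $m$.
\end{enumerate}
Then $I$ is an infinite $\prec^g$-chain, and every infinite $\prec^g$-chain is a subset of $I$ producing the same pair $(h,d)$. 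Since $d\equiv 1$ and $h$ never agrees with $\alt hd$, we get $B(h,d)=I$. So infinite chains exist, yet clause (1) fails for this $g$; the lemma survives only through clause (2), because $g\neq h$. Your construction for clause (2) assumes there is no infinite chain at all, so it never covers this $g$.

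Your own unwinding shows what is missing. An $m\in I\cap B(h,d)$ forces $h(m)\neq g(m)$, hence $h\neq g$. But nothing forces $m$ to be the code of an initial segment of $g$. The paper builds its chain inside $I^g_\sigma\subseteq R_g:=\{2\cdot\#(g\restriction n):n\in\omega\}$. With that restriction, finiteness is immediate. If $h\neq g$, then $B(h,d)\subseteq R_h$ and $R_g\cap R_h$ is finite. If $h=g$, every $m\in I$ satisfies $h(m)=\alt hd(m)$ and is therefore removed from $B(h,d)$.

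Clause (2) is left unresolved, and the infinite antichain you try to carry along is more than is needed. Your forest-order and Ramsey observations are correct but play no role. Let $\sigma_{k+1}=\sigma_k\hatc0^{n_k-|\sigma_k|}\hatc1$. Every later choice of $c$ lies in the cone $I^g_{\sigma_{k+1}}$, which is determined by the current choice $n_k$. So it suffices to pick $n_k\in I_{\sigma_k}$ whose code $2\cdot\#(g\restriction n_k)$ has no $\prec^g$-successor in $I^g_{\sigma_{k+1}}$.

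Suppose no position in the current cone has this property. Then every code there has a successor in its own subcone, which again lies in the cone. Iterating yields an infinite $\prec^g$-chain inside $R_g$, which gives clause (1) by the argument above.

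This one-step lookahead is exactly what the paper's hypothesis \eqref{2.7hyp} and its negation \eqref{2.7hypneg} express. The negation also permits points with $g(n_0)=\alt g{\sigma_1}(n_0)$, which simply drop out of $B(g,c)$. If you take the dichotomy on chains inside $R_g$ rather than on arbitrary chains, you need neither Ramsey's theorem nor a fusion argument.
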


\begin{proof}
    We prove the first case under the assumption that a statement holds, and if not then we obtain the second case.

    Define $C=\{\sigma\in{}^{<\omega}2:|\sigma|=0\text{ or }\sigma\text{ ends with }1\}$.
    In this proof, where $\sigma$ and $n_0$ are specified, let $\sigma_1=\sigma\hatc0^{n_0-|\sigma|}\hatc1$.
    We consider the assumption:
    \begin{equation}
        \label{2.7hyp}
        (\exists\sigma\in C)(\forall n_0\in I^g_{\sigma})[g(n_0)\neq \alt g{\sigma_1}(n_0)\land(\exists n_1\in I^g_{\sigma_1})(n_0\prec^gn_1)]
    \end{equation}
    Fix $\sigma$ witnessing the first quantifier in (\ref{2.7hyp}), and let $n_0=\min I^g_{\sigma}$.
    From (\ref{2.7hyp}) we obtain $n_1\in I^g_{\sigma_1}$ such that $n_0\prec^g n_1$.
    Recursively choosing $\sigma_{k+1}=\sigma_k\hatc0^{n_k-|\sigma_k|}\hatc1$ and $n_{k+1}\in I^g_{\sigma_{k+1}}$ such that $n_k\prec^gn_{k+1}$,
    we define a set $I=\{n_k:k\in\omega\}$ which is linearly ordered by $\prec^g$.
    This gives us $h$ and $d$ such that $g\restriction I=\alt hd\restriction I$.

    As $\alt hd$ is equal to $g$ on $I$, but $g$ is distinct from $\alt g{\sigma_1}$ at each point in $I$, we have that 
    $I^h_{d\restriction n_0}\cap I^g_{\sigma_1\restriction n_0}$ is finite.
    Thus, as $B(h,d)\subseteq^* I^h_{d\restriction n_0}$ and $I\subseteq^* I^g_{\sigma_1\restriction n_0}$ we have that $B(h,d)\cap I$ is finite.

    \vspace*{1em}

    \noindent Now assume that (\ref{2.7hyp}) fails, and thus we have the following:
    \begin{equation}
        \label{2.7hypneg}
        (\forall\sigma\in C)(\exists n_0\in I^g_{\sigma})[g(n_0)=\alt g{\sigma_1}(n_0)\lor(\forall n_1\in I^g_{\sigma_1})\lnot(n_0\prec^gn_1)]
    \end{equation}
    Let $\sigma_0=\emptyset$ and $n_0$ witnessing the second quantifier in (\ref{2.7hypneg}).
    Similarly to above, recursively choose $n_k$ and $\sigma_{k+1}$ such that $\sigma_{k+1}=\sigma_k\hatc0^{n_k-|\sigma_k|}\hatc1$, 
    and so they witness (\ref{2.7hypneg}).
    Let $c=\bigcup_{k\in\omega}\sigma_k$.
    By the choices of $\sigma_{k+1}$, observe that $c$ is self-refining and takes the value $1$ infinitely often.
    Whenever $n_k$ satisfies the first disjunct, we have $n_k\notin B(g,c)$. 
    We get that $B(g,c)=\{n_k:k>0\text{ and }g(n_k)\neq\alt g{\sigma_{k+1}}(n_k)\}$, 
    which is a $\prec^g$-antichain by the second disjunct for each member and that for any two indices $\ell\le k$ we have $n_k\in I^g_{\sigma_\ell}$.
\end{proof}

From Lemma \ref{2.7} it is not hard to show that $\mathcal E$ is maximal:

\begin{claim}
    The family $\mathcal E$ is MED.
\end{claim}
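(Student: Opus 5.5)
The claim has two halves: $\mathcal E$ is eventually different, and $\mathcal E$ is maximal. I would prove them separately, with maximality coming almost directly from Lemma \ref{2.7}.

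\emph{Eventual difference.} Fix distinct pairs $(g,c)\neq(h,d)$. First observe that the unaltered functions are pairwise eventually different. Indeed, $\alt gc(n)=\alt hd(n)$ forces $g\restriction n=h\restriction n$ and $c\restriction n=d\restriction n$, and this fails for all large $n$.

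Next observe that if $c$ is not self-refining, then for all large $n$ the string $c\restriction n$ is not self-refining, so $\alth gc=\alt gc$ cofinitely. Hence I only need to worry about self-refining $c$ (and likewise $d$). I then split the points $n$ where $\alth gc(n)=\alth hd(n)$ into three kinds.
\begin{enumerate}
\item[(i)] Neither value is altered at $n$. There are finitely many such $n$ by the first observation.
\item[(ii)] Both values are altered at $n$. Then $n\in B(g,c)\cap B(h,d)$, which is finite by the claim that $\{B(g,c):c\text{ self-refining}\}$ is AD.
\item[(iii)] Exactly one is altered, say $\alth gc(n)=g(n)$ with $n\in B(g,c)$, and $\alth hd(n)=\alt hd(n)$. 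Then $g(n)\approx(h\restriction n,d\restriction n)$.
\end{enumerate}

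Case (iii) is the step I expect to need care. Take any two such points $n_0<n_1$. The strings coded by $g(n_0)$ and $g(n_1)$ are initial segments of the same pair $(h,d)$, of lengths $n_0$ and $n_1$. So by Definition \ref{precg} we get $n_0\prec^g n_1$.

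Now suppose there were a third such point $n_2>n_1$. Then $B(g,c)\restriction n_2$ contains the comparable pair $n_0\prec^g n_1$, so it is not a $\prec^g$-antichain. By Definition \ref{althgc}, $\alth gc(n_2)$ would then not have been altered, which is a contradiction. So case (iii) contributes at most two points, and the symmetric case with the roles of $(g,c)$ and $(h,d)$ swapped is handled the same way. Altogether $\alth gc$ and $\alth hd$ agree only finitely often.

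\emph{Maximality.} Fix $g\in\wtow$ and apply Lemma \ref{2.7}.

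In case 1 we have an infinite $I$ with $g\restriction I=\alt hd\restriction I$ and $I\cap B(h,d)$ finite. Since $\alth hd$ can differ from $\alt hd$ only on $B(h,d)$, we get $\alth hd=g$ on the infinite set $I\smallsetminus B(h,d)$. Hence $\alth hd=^\infty g$.

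In case 2 we have a self-refining $c$ with infinitely many $1$s such that $B(g,c)$ is a $\prec^g$-antichain. Then every restriction $B(g,c)\restriction n$ is an antichain and every $c\restriction n$ is self-refining, so $\alth gc(n)=g(n)$ for all $n\in B(g,c)$. There are two subcases.
\begin{itemize}
\item If $B(g,c)$ is infinite, then $\alth gc=^\infty g$ immediately.
\item If $B(g,c)$ is finite, consider the infinitely many points $m=2\cdot\#(g\restriction n)$ with $c(n)=1$. All but finitely many of them lie outside $B(g,c)$, so by Definition \ref{Bgc} they satisfy $g(m)=\alt gc(m)$. At such points $\alth gc(m)=\alt gc(m)$, because $m\notin B(g,c)$. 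So again $\alth gc=^\infty g$.
\end{itemize}
In every case some member of $\mathcal E$ agrees with $g$ infinitely often, so $\mathcal E$ is maximal.
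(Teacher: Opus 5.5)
Your proof is correct. The maximality half is the paper's argument: Lemma~\ref{2.7}, followed by the same case split. You spell out the finite-$B(g,c)$ subcase a little more carefully, making explicit that $\hat g_{(c)}=g_{(c)}$ at the points $2\cdot\#(g\restriction n)$ that lie outside $B(g,c)$.

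The eventual-difference half takes a genuinely different route from the paper, and the difference matters. The paper reduces to showing that $X=\{n\in B(g,c)\smallsetminus B(h,d):\hat g_{(c)}(n)=h_{(d)}(n)\}$ is finite. It then asserts $\hat g_{(c)}\restriction X=g_{(c)}\restriction X$, so that an infinite $X$ would give $g_{(c)}=^\infty h_{(d)}$. That assertion fails exactly at the points that matter. Wherever the alteration in Definition~\ref{althgc} actually applies, $\hat g_{(c)}(n)=g(n)$, and $n\in B(g,c)$ forces $g(n)\neq g_{(c)}(n)$. The paper's argument therefore only covers the trivial, unaltered points of $X$, and it never uses the $\prec^g$-antichain clause.

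Your case (iii) supplies the missing idea:
\begin{itemize}
\item agreement at an altered point forces $g(n)\approx(h\restriction n,d\restriction n)$;
\item any two such points are $\prec^g$-comparable;
\item the antichain clause then forbids any further alteration of $\hat g_{(c)}$.
\end{itemize}
You also handle $B(g,c)\cap B(h,d)$ via the AD claim, and you deal with non-self-refining $c$ explicitly. The paper's ``by symmetry'' step passes over both of these.

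So the paper's version is shorter but does not go through as written. Yours is complete, and it shows why the antichain condition is built into the definition of $\hat g_{(c)}$ in the first place.
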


\begin{proof}
    First, we show that $\mathcal E$ is ED.
    Let $f_0,f_1\in\mathcal E$ be distinct members.
    Let $g,c$ and $h,d$ such that $f_0=\alth gc$ and $f_1=\alth hd$.
    By definition, were $f_0$ and $f_1$ to agree on infinitely many points, 
    then all but finitely many of them would be in $B(g,c)\cup B(h,d)$.
    By symmetry, it suffices to show that
        $$X=\{n\in B(g,c)\smallsetminus B(h,d):\alth gc(n)=\alt hd(n)\}$$
    is finite, as $\alth hd$ agrees with $\alt hd$ on the complement of $B(h,d)$.
    $X$ is a subset of $B(g,c)$ so $\alth gc\restriction X=\alt gc\restriction X$.
    If $X$ is infinite, then $\alt gc$ and $\alt hd$ agree on an infinite set,
    and so $(g,c)=(h,d)$ and $f_0=f_1$, contradicting our assumption.

    \vspace*{1em}

    \noindent Fix $g\in\wtow$.
    If the first case of Lemma \ref{2.7} holds, 
    then there is an infinite set $I$ and $h,d$ such that $g\restriction I=\alt hd\restriction I$ and $I\cap B(h,d)$ is finite.
    As $I\cap B(h,d)$ is finite, $g$ also agrees with $\alth hd$ on infinitely many points.
    
    If the second case holds, then there is $c$ self-refining which takes the value $1$ infinitely often 
    and it is such that $B(g,c)$ is a $\prec^g$-antichain.
    If $B(g,c)$ is infinite, then $g\restriction B(g,c)=\alth gc\restriction B(g,c)$ and we are done.
    If $B(g,c)$ is finite, then there are infinitely many $n$ such that $g(n)=\alt gc(n)$ and so we certainly have that $g=^\infty\alth gc$.
\end{proof}

Finally, we prove that $\mathcal E$ is a $\Pi^0_1$-class.
It will be helpful to extend a few of the above definitions to work with finite strings.
The following are simply approximations of the infinite versions above:

\begin{definition}
    For each $\zeta\in{}^{<\omega}\omega$ and $\sigma\in{}^{<\omega}2$ such that $|\zeta|=|\sigma|$ define
        $$B(\zeta,\sigma)=\{2\cdot\#(\zeta\restriction n):n<|\sigma|\text{ and }\sigma(n)=1\}
            \smallsetminus\{n:\zeta(n)\approx(\zeta\restriction n,\sigma\restriction n)\}.$$
    Also, for $\zeta\in{}^{<\omega}\omega$ and $n_0,n_1\in\omega$ with $(\eta_i,\tau_i)\approx\zeta(n_i)$ for each $i\in\{0,1\}$,
    define $n_0\prec^\zeta n_1$ if
    \begin{itemize}
        \item $n_0<n_1<|\zeta|$,
        \item $|\eta_i|=|\tau_i|=n_i$ for each $i\in\{0,1\}$,
        \item $\eta_0\prec\eta_1$ and $\tau_0\prec\tau_1$.
    \end{itemize}
\end{definition}

We build a tree of approximations to the elements $\alth gc$ of $\mathcal E$:

\begin{proof}[Proof of Theorem \ref{pi01med}]
    Define the tree $T\subseteq{}^{<\omega}\omega$ as the set of $\xi\in{}^{<\omega}\omega$ such that 
    for all odd $k<|\xi|$ we have $|\zeta|=|\sigma|=k$ where $(\zeta,\sigma)\approx\xi(k)$, and for every $n<k$:

    \noindent If all of the following conditions hold:
    \begin{itemize}
        \item $\sigma$ is self-refining,
        \item $B(\zeta,\sigma)\cap n$ is a $\prec^\xi$-antichain, and
        \item $n\in B(\zeta,\sigma)$
    \end{itemize}
    then $\xi(n)=\zeta(n)$,
    otherwise $\xi(n)\approx(\zeta\restriction n,\sigma\restriction n)$.

    The set of self-refining strings is computable, and each set $B(\zeta,\sigma)$ is finite, so the relevant properties are decidable.
    Hence $T$ is computable.

    As each set $B(g,c)$ contains only even values, 
    each element $\alth gc\in\mathcal E$ has value $\alt gc(n)$ on all odd values $n$ so we can recover finite segments of $g$ and $c$ on those values.
    Fixing arbitrary $\xi\prec\alth gc$, for odd $k<|\xi|$ and $(\zeta,\sigma)\approx\xi(k)$ 
    we indeed have that the above requirement holds by the definition of $\alth gc$.
    Hence each $\alth gc\in[T]$.

    If we take a path $f\in[T]$ we have for all odd $k$ that $(\zeta_k,\sigma_k)\approx f(k)$ with $|\zeta_k|=|\sigma_k|=k$
    and for odd $\ell>k$ that $\zeta_k\prec\zeta_\ell$ and $\sigma_k\prec\sigma_\ell$.
    Defining $g=\bigcup_{k\text{ odd}}\zeta_k$ and $c=\bigcup_{k\text{ odd}}\sigma_k$ we have $f=\alth gc$ by the above requirement,
    and so $f\in\mathcal E$.
    Hence $\mathcal E=[T]$.
\end{proof}

\subsection{Computable members of $\mathcal E$}
\label{medrec}

While the computability theoretic notion of MED families defined in Section \ref{sf} is useful in that it can be coded as a single object,
it has various differences from the set theoretic notion.
In particular, it enforces an arbitrary ordering on the family, which is encoded in the object itself.
Another computability theoretic approach is to look only at the computable members of a family, disregarding the rest.
If we are looking at a kind of family with a strict internal structure, then the descriptive complexity may be interesting.
We show that Schrittesser's construction witnesses the existence of a $\Pi^0_1$ family whose members are MED relative to computable functions:

\begin{theorem}
    \label{effectivesch}
    Let $\mathcal E$ be the $\Pi^0_1$ MED family constructed in the proof of Theorem~\ref{pi01med}.
    The computable members of $\mathcal E$ are MED for computable functions.
    That is, each computable function is infinitely often equal to some computable member of $\mathcal E$.
\end{theorem}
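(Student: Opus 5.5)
The plan is to rerun the maximality argument for $\mathcal E$ (Lemma \ref{2.7} and the claim after it) for a fixed computable $g\in\wtow$, checking that in each case the witness is $\alth hd$ with $h$ and $d$ computable. Two observations make this enough. First, the map $(h,d)\mapsto\alth hd$ is computable: whether $c\restriction n$ is self-refining, whether $n\in B(h,d)$, and whether $B(h,d)\restriction n$ is a $\prec^h$-antichain all depend on finitely much of $h$ and $d$ and are decidable, exactly as in the proof that $T$ is computable. So if $h,d$ are computable, $\alth hd$ is a computable member of $\mathcal E$. Second, no \emph{uniformity} in $g$ is needed, since the statement only asks that each computable $g$ be infinitely often equal to some computable member. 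We may therefore use noneffective case distinctions, for instance which side of (\ref{2.7hyp}) holds and which $\sigma\in C$ witnesses it, and afterwards treat the witnessing data as fixed finite parameters.

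\emph{Case 1: (\ref{2.7hyp}) holds for $g$.} Fix a witnessing $\sigma\in C$. The recursion in Lemma \ref{2.7} picks $n_{k+1}\in I^g_{\sigma_{k+1}}$ with $n_k\prec^g n_{k+1}$. I will make each choice the least such $n_{k+1}$. The sets $I^g_\tau$ are computable uniformly in $\tau$ because $g$ is computable, and the relation $\prec^g$ is decidable. By (\ref{2.7hyp}) the search always succeeds, so the sequences $\langle n_k\rangle$ and $\langle\sigma_k\rangle$ are computable. Then $h=\bigcup_k\eta_k$ and $d=\bigcup_k\tau_k$, where $(\eta_k,\tau_k)\approx g(n_k)$, are computable. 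The rest of Lemma \ref{2.7} and the MED claim go through unchanged: $I\cap B(h,d)$ is finite, so $g=^\infty\alth hd$.

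\emph{Case 2: (\ref{2.7hypneg}) holds.} The recursion chooses $n_k\in I^g_{\sigma_k}$ satisfying the disjunction. The first disjunct, $g(n_0)=\alt g{\sigma_1}(n_0)$, is decidable. The second disjunct, $(\forall n_1\in I^g_{\sigma_1})\lnot(n_0\prec^g n_1)$, is only $\Pi^0_1$, so a naive search for $n_k$ is merely $\emptyset'$-computable, and $c$ would be $\Delta^0_2$ rather than computable. This is the main obstacle. My first attempt is to bound the quantifier. If $n_0\prec^g n_1$, then $g(n_1)$ codes strings of length $n_1$ extending the pair coded by $g(n_0)$. The point $n_1$ is free, however, so this gives no bound directly.

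To get around this, I will split on whether some choice point can be found effectively. Suppose that at some stage no $n\in I^g_{\sigma_k}$ satisfies the decidable first disjunct, and that every $n$ there has a $\prec^g$-extension in $I^g_{\sigma_{k+1}}$. That situation is exactly (\ref{2.7hyp}) relativised to the string $\sigma_k$. So I will use the stronger dichotomy: either \emph{some} $\sigma\in C$ satisfies (\ref{2.7hyp}), which is Case 1 and gives computable $h,d$, or \emph{no} $\sigma\in C$ does. In the latter case, for every $\sigma$ either a first-disjunct witness exists, which is found by a computable search, or some $n_0$ has no extension. When no first-disjunct witness exists, I will run the search in parallel over candidates $n_0$, waiting for the (decidable) enumeration of $\prec^g$-extensions. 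The candidate sought is one lacking extensions, and such a candidate is again only $\Pi^0_1$ to detect.

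If this cannot be made effective, the fallback is the final step of the MED claim. When $B(g,c)$ is finite, $g=^\infty\alth gc$ anyway, and when it is infinite and an antichain the agreement comes from $B$. So it suffices to construct a \emph{computable} self-refining $c$ with infinitely many $1$s such that $B(g,c)$ is a $\prec^g$-antichain, possibly different from the $c$ in the lemma. I would build $c$ by a finite-injury style computable construction. Each new $1$ is placed at an $n$ chosen so that $2\cdot\#(g\restriction n)$ currently has no $\prec^g$-relation with the earlier elements of $B$. If a relation later appears, we use the fact that the offending pair of elements lies in a chain, and exploit Case 1, where chains produce computable $h,d$. I expect this last step to be the heart of the proof.
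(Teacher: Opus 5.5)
\paragraph{Verdict.} Your proposal has a genuine gap: Case~2 is not proved. Your first observation (that $(h,d)\mapsto\alth hd$ is computable), your use of non-uniformity, and your Case~1 all follow the paper's proof of Lemma~\ref{efflem} and Theorem~\ref{effectivesch}.

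\paragraph{The gap in Case~2.} You correctly note the obstacle. Choosing $n_k$ requires certifying the bracket in (\ref{2.7hypneg}). Its second disjunct, that $n_k$ has no $\prec^g$-successor in $I^g_{\sigma_{k+1}}$, is $\Pi^0_1$. So the recursion of Lemma~\ref{2.7} only yields a $\Delta^0_2$ sequence $c$. None of your repairs removes this.
\begin{itemize}
\item The parallel search still has to recognise a candidate with no extensions, which is the same $\Pi^0_1$ property.
\item In the finite-injury fallback, relations between points already in $B(g,c)$ never ``appear later''. Whether two chosen points are $\prec^g$-comparable is decidable once both are chosen, since $g$ is computable.
\item Your greedy rule is computable at each step: place each new $1$ so that its point is incomparable with all earlier points of $B(g,c)$, or satisfies the first disjunct. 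But it can get stuck. This happens when every point of the current region $I^g_{\sigma_k}$ fails the first disjunct and is $\prec^g$-comparable with one of the finitely many earlier choices. That is again an undetectable $\Pi^0_1$ event.
\item An injured $c$ cannot be repaired. A committed $1$ cannot be withdrawn, and once $B(g,c)\cap n$ contains a comparable pair, $\alth gc$ never copies $g$ again past $n$. You would have to abandon $c$ and restart. Your sketch neither triggers restarts on a detectable ($\Sigma^0_1$) event nor bounds their number. Only with finitely many restarts would the final attempt be a computable witness.
\item ``Exploit Case~1'' is unavailable. A comparable pair is not an infinite chain, and Case~1 needs (\ref{2.7hyp}) for some $\sigma$, which is exactly what fails in this branch.
\end{itemize}
What is missing is an argument that some \emph{computable} self-refining $c$ with infinitely many $1$s has $B(g,c)$ a $\prec^g$-antichain. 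Alternatively you would need some other computable member of $\mathcal E$ that agrees with $g$ infinitely often. The proposal supplies neither.

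\paragraph{Comparison with the paper.} The paper handles Case~2 in one sentence. It asserts that the recursion is computable ``by the assumption of (\ref{2.7hypnegeff}) and the computability of $I^g_\sigma$''. Computability of $I^g_\sigma$ makes membership and the first disjunct decidable. It does not make the unbounded universal quantifier over $I^g_{\sigma_1}$ decidable, so that justification does not address the point you raise. Your diagnosis therefore targets a step the paper states without proof. Neither your outline nor the paper's text effectivises Lemma~\ref{2.7} in Case~2. Completing the argument would need a genuinely new idea there, or a different kind of witness, rather than a more careful reading of the existing recursion.
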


Verifying this comes down to showing that for any computable $g$, the witnesses for maximality obtained in Lemma \ref{2.7} will also be computable.
The argument in Lemma \ref{2.7} makes use of elements of the sets $I^g_\sigma$, so we first check that we can easily work with these sets:

\begin{remark}
    \label{Igscomp}
    For any computable $g\in\wtow$ and $\sigma\in{}^{<\omega}2$, the set $I^g_\sigma$ is computable uniformly in $g$ and $\sigma$.
\end{remark}

\begin{proof}
    Recall that ${}^{<\omega}\omega$ and $\omega$ are identified via a computable bijection $\#$.
    Observe that
        $$n\in I^g_\sigma\iff n\text{ even and }\#^{-1}(n/2)\prec g\text{ and }|\#^{-1}(n/2)|\in I_\sigma.$$
    This can be decided uniformly in $g$ and $\sigma$.
\end{proof}

The key idea in the following effective version of Lemma \ref{2.7}
is that while for each $g$ we have no effective way to tell which case in Lemma \ref{2.7} will hold,
in particular whether (\ref{2.7hyp}) holds or not, 
in either case the construction of a function in $\mathcal E$ which is infinitely often equal to $g$ will be computable.
We do not need to be able to know which function in $\mathcal E$ is infinitely often equal to $g$, just that there is at least one.

\begin{lemma}
    \label{efflem}
    For any computable $g\in\wtow$, one of the following holds:
    \begin{enumerate}
        \item There is a computable infinite set $I$ which is linearly ordered by $\prec^g$.
            The definition of $\prec^g$ thus gives computable functions $h\in\wtow$ and $d\in\ttow$ with $g\restriction I=\alt hd\restriction I$.
            We get that $I\cap B(h,d)$ is finite and so $g\restriction I=^*\alth hd\restriction I$.
        \item There is a computable and self-refining function $c$ such that $B(g,c)$ is an infinite $\prec^g$-antichain.
            Thus, $g\restriction B(g,c)=\alth gc\restriction B(g,c)$.
    \end{enumerate}
\end{lemma}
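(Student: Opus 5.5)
We follow the proof of Lemma \ref{2.7} and split on whether (\ref{2.7hyp}) holds for the fixed computable $g$. We make no attempt to decide which case occurs. Instead, in each case we check that the recursion used there, once a finite amount of data is fixed, is computable. By Remark \ref{Igscomp}, every $I^g_\sigma$ is computable uniformly in $\sigma$. Since $g$ is computable, the relation $n_0\prec^g n_1$ is decidable: it only requires decoding $g(n_0)$ and $g(n_1)$ and comparing finite strings. Likewise $\alt g\sigma(n)$ is computable from $\sigma$ and $n$.

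Suppose first that (\ref{2.7hyp}) holds, witnessed by some $\sigma\in C$. We fix this $\sigma$ as a finite parameter and run the recursion from Lemma \ref{2.7}. We set $n_0=\min I^g_\sigma$. Given $n_k$ and $\sigma_k$, we let $\sigma_{k+1}=\sigma_k\hatc0^{n_k-|\sigma_k|}\hatc1$ and search for the least $n_{k+1}\in I^g_{\sigma_{k+1}}$ with $n_k\prec^g n_{k+1}$. We need the hypothesis to apply at every stage, i.e.\ $n_k\in I^g_{\sigma}$ with the relevant $\sigma_1$ being $\sigma_{k+1}$. The cleanest way is to apply (\ref{2.7hyp}) to $\sigma_k$ in place of $\sigma$. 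If the universal statement is only guaranteed for the single witness $\sigma$, we adjust as in the original argument. Since $\sigma_k$ extends $\sigma$ and lies in $C$, I would check that the original proof already uses the property at $\sigma_k$. If it does not, we replace the hypothesis by its instances along the chain, which the existence proof implicitly needs anyway. Given this, each search terminates, so $k\mapsto n_k$ is computable. The set $I=\{n_k\}$ is increasing, hence computable, and linearly ordered by $\prec^g$. The functions $h$ and $d$ are obtained as unions of the strings decoded from $g(n_k)$, so they are computable. Finiteness of $I\cap B(h,d)$ is proved exactly as in Lemma \ref{2.7}, and then $g\restriction I=^*\alth hd\restriction I$ follows from Definition \ref{althgc}.

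Now suppose (\ref{2.7hypneg}) holds. We build $c$ by the recursion of Lemma \ref{2.7}: $\sigma_0=\emptyset$, $n_k$ a witness for $\sigma_k$, and $\sigma_{k+1}=\sigma_k\hatc0^{n_k-|\sigma_k|}\hatc1$. The difficulty, and what I expect to be the main obstacle, is that the matrix of (\ref{2.7hypneg}) contains $(\forall n_1\in I^g_{\sigma_1})\lnot(n_0\prec^g n_1)$, which is $\Pi^0_1$, so a witness $n_0$ cannot be found effectively. My plan is to note that the property needed for $n_0$ has only a limited effect on $c$. What matters for $B(g,c)$ being an antichain is the set of $n_k$ that actually enter $B(g,c)$. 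Each witness is either of the decidable first kind ($g(n_0)=\alt g{\sigma_1}(n_0)$) or a $\prec^g$-maximal point within the relevant $I^g$ set. A search that, at each $n_0\in I^g_{\sigma_k}$ in increasing order, either finds an $n_1$ with $n_0\prec^g n_1$ (and then moves on) or waits forever would be the natural procedure, but it does not halt. So I would instead use a nonuniform argument: there are only countably many possible $c$ produced this way, and we need only one computable such $c$, not uniformity. Concretely, I would argue that the sequence of least witnesses is computable because $g$ is computable and each failure of $\prec^g$-extendability can be certified finitely. I would try to prove this by exploiting that $n_0\prec^g n_1$ forces $g(n_1)$ to code strings extending those of $g(n_0)$, with lengths dictated by $n_1$. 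Combined with the evenness and coding structure of $I^g_\sigma$, this should make non-extendability decidable, or reduce it to finitely many checks. If that fails, I would fall back on replacing the witness choice by a computable thinning: at stage $k$, pick the least $n_0\in I^g_{\sigma_k}$ for which no $\prec^g$-predecessor among $n_0,\dots,n_{k-1}$ exists. This guarantees the antichain property directly, since the antichain condition only involves pairs $n_\ell\prec^g n_k$ with $\ell<k$. Each such check is finite, so $c$ is computable. It would then remain to check that $B(g,c)$ is infinite; if it is finite, then $g=^\infty\alt gc$ and we are done anyway, as in the proof that $\mathcal E$ is MED.

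Finally, in both cases we conclude $g=^\infty$ some $\alth hd$ with $h,d$ computable. Since $\alth hd$ is computable from $h$ and $d$, this gives Theorem \ref{effectivesch}.
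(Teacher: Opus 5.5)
Your first case is the paper's argument. With the finite witness $\sigma$ fixed, each $n_{k+1}$ is found by searching the uniformly computable set $I^g_{\sigma_{k+1}}$ for a $\prec^g$-successor of $n_k$. That relation is decidable, and the search halts because such an element exists. Your hesitation about applying (\ref{2.7hypeff}) along the chain concerns Lemma~\ref{2.7} itself, not effectivity.

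The gap is in the case where (\ref{2.7hypnegeff}) holds. You are right that when the decidable first disjunct fails, a witness $n_0$ must satisfy the $\Pi^0_1$ clause $(\forall n_1\in I^g_{\sigma_1})\lnot(n_0\prec^g n_1)$. So searching for the $n_k$ is not effective, and the paper's proof passes over exactly this point when it asserts that the process is computable. But neither of your repairs closes the gap.
\begin{itemize}
\item Non-extendability is not decidable in general. Nothing in the coding bounds where a length-correct extension of $g(n_0)$ may occur in $I^g_{\sigma_1}$. A computable $g$ can put one there exactly when some computation halts, so the witness sets can be non-computable $\Pi^0_1$ sets.
\item Appealing to non-uniformity does not help. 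You may fix finitely many bits, but the recursion makes infinitely many such choices. A co-c.e.\ tree of finite witness sequences without dead ends need not have a computable path, and the fact that only countably many $c$ arise is irrelevant.
\end{itemize}

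The thinning fallback never uses (\ref{2.7hypnegeff}), which is already a warning sign, and its search need not halt. Even when (\ref{2.7hypnegeff}) holds, the least element $n_0$ of $I^g_\emptyset$, which thinning picks first, may fail to be a witness and have every element of $I^g_{\sigma_1}$ as a $\prec^g$-successor. This is consistent with (\ref{2.7hypnegeff}) if two things hold. First, the elements of $I^g_{\sigma_1}$ are pairwise $\prec^g$-incomparable, so they serve as witnesses for every $\sigma\succeq\sigma_1$. Second, the remaining points of $I^g_\emptyset$ code pairs of the wrong length. Then no candidate for $n_1$ qualifies, and $c$ is never completed.

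Nor can a stall be converted into the first alternative. A stall is not detectable, and even knowing it occurred, one point with infinitely many $\prec^g$-successors does not yield a chain, since successors of a common point need not be comparable. So the second case of the lemma remains unproved.

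A smaller point: suppose $c$ is computable with infinitely many $1$s but $B(g,c)$ is finite. The lemma still demands one of its alternatives, and you can get the first. The set $J=\{n:g(n)=\alt gc(n)\}$ is then an infinite computable $\prec^g$-chain disjoint from $B(g,c)$, with $(h,d)=(g,c)$.
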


\begin{proof}
    We show that the proof of Lemma \ref{2.7} yields computable functions in either case.

    Recall that we define $C=\{\sigma\in{}^{<\omega}2:|\sigma|=0\text{ or }\sigma\text{ ends with }1\}$, and where $\sigma$ and $n_0$ are specified,
    we let $\sigma_1=\sigma\hatc0^{n_0-|\sigma|}\hatc1$.
    Again, consider the following assumption:
    \begin{equation}
        \label{2.7hypeff}
        (\exists\sigma\in C)(\forall n_0\in I^g_\sigma)[g(n_0)\neq\alt g{\sigma_1}(n_0)\land(\exists n_1\in I^g_{\sigma_1})(n_0\prec^gn_1)]
    \end{equation}
    Under the assumption of (\ref{2.7hypeff}), fix $\sigma$ witness the first quantifier.
    As $\sigma$ is finite, it does not matter that we cannot obtain it uniformly in $g$; it adds no computational complexity of its own.
    As the set $I^g_\sigma$ is computable, by Remark \ref{Igscomp}, the following process yields an infinite set $I$:
    Let $n_0=\min I^g_\sigma$.
    This defines $\sigma_1$ as above, so by (\ref{2.7hypeff}), take $n_1\in I^g_{\sigma_1}$ such that $n_0\prec^gn_1$.
    Recursively choosing ${\sigma_{k+1}=\sigma_k\hatc0^{n_k-|\sigma_k|}\hatc1}$ and $n_{k+1}\in I^g_{\sigma_{k+1}}$ such that $n_k\prec^gn_{k+1}$,
    
    we define a computable set ${I=\{n_k:k\in\omega\}}$ which is linearly ordered by $\prec^g$.

    Define $\langle\eta_k,\tau_k\rangle\approx g(n_k)$ for each $k$.
    Let $h=\bigcup_k\eta_k$ and $d=\bigcup_k\tau_k$.
    The functions $h$ and $d$ will be computable as $I$ and $g$ are computable.
    The proof that $g\restriction I=^*\alth hd\restriction I$ needs no further verification from the proof of Lemma \ref{2.7}.

    If (\ref{2.7hypeff}) fails then we have:
    \begin{equation}
        \label{2.7hypnegeff}
        (\forall\sigma\in C)(\exists n_0\in I^g_\sigma)[g(n_0)=\alt g{\sigma_1}(n_0)\lor(\forall n_1\in I^g_{\sigma_1})\lnot(n_0\prec^gn_1)]
    \end{equation}
    Define the sequence $\sigma_k$ recursively as in the proof of Lemma \ref{2.7}.
    The existential statement and computability of $I^g_\sigma$ uniformly in $\sigma$ make sure that each successive $n_k$ can be found.
    By the assumption of (\ref{2.7hypnegeff}) and the computability of $I^g_\sigma$ this process is computable, 
    and so $c=\bigcup_k\sigma_k$ is computable.
    The rest of the verification is unchanged from the proof of Lemma \ref{2.7}.
\end{proof}

Similarly to before, this is sufficient to prove that the computable members of $\mathcal E$ are MED relative to the computable functions:

\begin{proof}[Proof of Theorem \ref{effectivesch}]
    As $\mathcal E$ as a whole is ED, its computable members certainly are.
    Let $g$ be a computable function.
    By Lemma \ref{efflem} we know that for any computable $g\in\wtow$, 
    there are computable functions $h\in\wtow$ and $d\in\ttow$ such that $g=^\infty\alth hd$.
    The definition of $\alth hd$ checks only finite conditions, and so $\alth hd$ is computable.
    Thus, there is a computable member of $\mathcal E$ which is infinitely often equal to $g$.
\end{proof}

    \newpage
    \section{Discussion}

The framework of mass problems that Lempp, Miller, Nies, and Soskova \cite{LMNS:2023} 
used for computability-theoretic analogues of cardinal characteristics 
is very flexible in representing the kinds of families relevant to cardinal characteristics research for various purposes in computability.
We have built on their work to give analogues for several other cardinal characteristics and shown relations to those they defined.

The computability-theoretic MED families and towers of functions discussed in Section \ref{sf} show that there is a rich structure
of these mass problem analogues around the MAD families and towers of sets discussed by in \cite{LMNS:2023}.
In particular, these mass problems do not seem to coincide in the Muchnik or Medvedev degrees with any well-studied notions.
The analogues of the cardinals in Cicho\'n's diagram discussed in Section \ref{mpcichon} witness the flexibility of this framework.
It fits well into the general framework by Greenberg, Kuyper, and Turetsky \cite{GKT:2019}; 
associating mass problems to their Weihrauch problems and the relations follow from computable morphisms.

A key point in both Sections \ref{sf} and \ref{mpcichon} is the contrast between the mass problems analogy of families of sets or functions,
and the highness classes analogy of the corresponding forcing methods in set theory.
Several similarities and differences between the two frameworks can be observed.
In both frameworks, we can observe that many relations collapse once the degrees are necessarily high.
This is often the case as high sets are sufficient to compute all but a finite amount of information from computable sets 
from their indices in a uniform way.
Aside from the absolute complexity of the analogues,
the framework of mass problems allows us to find analogues of cardinal characteristics that highness classes do not.
Cardinal characteristics corresponding to families with internal structure cannot be described 
with the same kind of relational structure as the ones in Cicho\'n's diagram.

In the setting of highness classes, it has proven interesting to consider relations other than Turing reductions.
Kihara \cite{Kih:2017} investigated the case where an analogue of Cicho\'n's diagram is obtained for the 
hyperarithmetical sets and hyperarithmetical reducibility.
Fewer relations collapse in this setting compared to with computable sets, but some of the classes still coincide.
Recent work by Greenberg and Osso \cite{GO:2026} considered the case of Turing reduction modulo the ideal of hyperarithmetical sets.
They show that in this case, even further separations can be obtained, in particular between the analogous versions of high, (strong) meagre engulfing,
and (strong) null engulfing oracles.
This answers several questions asked in \cite[Section 8]{GKT:2019}.
One can ask similar questions in the setting of mass problems.
It would be an interesting direction to see which relations or separations between mass problems could be obtained 
when considering alternatives to the computable sets or functions and the Turing reduction.

The mass problem framework this thesis focused on is flexible but has significant differences from the set theory setting.
The sequences we associate with kinds of set-theoretic families encode the family alongside an arbitrary ordering.
This ordering carries with it its own complexity and may be useful in computing members of other classes, 
yielding results that may have no set-theoretic analogue for a reason not inherent to computability.
Perhaps one can find a way of representing analogues for the set-theoretic family that does not enforce an arbitrary ordering.
One possibility is to consider the paths through a tree; Question \ref{5.3} below is in this direction.

\subsection{Open questions}

Separating mass problems in the Muchnik degrees has proven difficult,
and so there are several questions left open from Sections \ref{sf} and \ref{mpcichon}:

\begin{question}
    \label{qtowers}
    Which, if any, of the following Muchnik relations hold?
    \begin{enumerate}
        \item[a)] $\ts\le_w\tf$: Do maximal towers of functions compute maximal towers of sets?
        \item[b)] $\ts\le_w\ts^-$: Do maximal weak towers of sets compute maximal towers of sets?
        \item[c)] $\tf\le_w\ts^-$: Do maximal weak towers of sets compute maximal towers of functions?
        \item[d)] $\ts^-\le_w\tf$: Do maximal towers of functions compute maximal weak towers of sets?
    \end{enumerate}
\end{question}

A negative answer to any part of Question \ref{qtowers} would, by Proposition \ref{iggeneral} and the known relations between these mass problems,
show that index guessability is not equivalent to computing no maximal tower of sets, and thus would answer a question from \cite{LMNS:2023}.

We can ask a similar question about the analogues of cardinals in Cicho\'n's diagram:

\begin{question}
    Which Muchnik relations in Cicho\'n's diagram for mass problems, Figure \ref{cichondiagrammp}, can be reversed?
\end{question}

Clearly $\bd<_s\mathcal D$ as there are low $\le^*$-unbounded families.
Aside from this, little is known for sure.
It seems very possible that the mass problems in Figure \ref{cichondiagrammp} above $\mathcal D$ are Medvedev equivalent to it.
Highness is enough to compute members of many mass problem analogues of cardinal characteristics, as shown in \cite{LMNS:2023}
with ultrafilter bases and maximal independent families, and the author \cite{McD:2024} with maximal ideal independent families.

In Section \ref{medrec}, we saw that there is a $\Pi^0_1$-class whose computable members are MED for computable functions.
Although there is no $\Pi^0_1$ MAD family (indeed, no analytic MAD family) in the set theory setting,
we can ask about the computable members:

\begin{question}
    \label{5.3}
    Is there a $\Pi^0_1$-class $P$ such that the computable members of $P$ are pairwise AD and for any computable set $R$ there is
    a computable set $X\in P$ such that $X\cap R$ is infinite?
\end{question}

As there is no constraint on the noncomputable members of $P$, 
there is no immediate reason that this would contradict the known results about MAD families in set theory.

    \newpage
    \appendix
\section{Notation and Conventions}
\label{appnot}

\setlength{\tabcolsep}{0.8em} {\renewcommand{\arraystretch}{1.4}
\begin{figure}[h]
    \centering
    \begin{tabular}{|l|l|}
        \hline
        Cardinal characteristics        & $\mathfrak{a,b,c,\cdots}$ \\ \hline
        Cardinals                       & $\kappa,\lambda$ \\ \hline
        Finite binary strings           & $\sigma,\tau,\rho$ \\ \hline
        Finite natural number strings   & $\eta,\zeta,\xi$ \\ \hline
        Infinite binary sequences       & $c,d$ \\ \hline
        Total natural number functions  & $f,g,h,p$ \\ \hline
        Partial natural number functions& $\varphi,\psi,\vartheta$ \\ \hline
        Turing functionals              & $\Phi,\Psi,\Gamma$ \\ \hline
        Ideals of families of reals     & $\mathcal{I,M,N}$ \\ \hline
        Mass problems                   & $\mathcal{A,B,C,\cdots}$ \\ \hline
    \end{tabular}
    \caption{Naming conventions.}
\end{figure}

The collection of infinite sets of natural numbers is written $\infsets$.
The collection of natural number functions is written $\wtow$, and similar spaces notated analogously.

For a relation $R\subseteq X\times X$, we define relations on ${}^\omega\!X$ by $xR^*y$ if $x(n)Ry(n)$ for all but finitely many $n\in\omega$,
and $xR^\infty y$ if $x(n)Ry(n)$ for infinitely many $n\in\omega$.
The latter is used to avoid ambiguity of the form $\not\mathrel{R}^*$.
For example, if $f:n\mapsto 100n$ and $g\mapsto n^2$ then $f\le^*g$.

We refer to a fixed computable bijection $\langle\cdot,\cdot\rangle:\omega\times\omega\to\omega$.

    \newpage

    \addcontentsline{toc}{section}{References}
    \printbibliography
\end{document}